\documentclass[twoside,a4paper,10pt]{amsart}

% PACKAGES

\usepackage{fullpage}
\usepackage{amscd}
\usepackage{amsmath}
\usepackage{amssymb}
\usepackage{amsthm}
\usepackage{color}
\usepackage{graphicx}
\usepackage{hyperref}
\usepackage{mathrsfs}
\usepackage{mathtools}
\usepackage{pict2e}
\usepackage{stackrel}
\usepackage[T1]{fontenc}
\usepackage{tikz}
\usepackage[utf8]{inputenc}
\usepackage{wasysym}
\usepackage[all]{xy}
\usepackage{xcolor}

\hypersetup{
    colorlinks,
    linkcolor={red!50!black},
    citecolor={blue!50!black},
    urlcolor={blue!80!black}
}

\usepackage{palatino}
\allowdisplaybreaks

% INDENTATION

\setlength{\parindent}{0pt}

% LEMMAS, THEOREMS, ETC.

\newtheorem{lemma}{Lemma}[section]
\newtheorem{remark}[lemma]{Remark}

\newtheorem{theorem}[lemma]{Theorem}
\newtheorem{corollary}[lemma]{Corollary}
\newtheorem{definition}[lemma]{Definition}
\newtheorem{proposition}[lemma]{Proposition}

\newtheorem*{mainthm}{Theorem \ref{thm:mainThmDual} and \ref{thm:mainThm}}
\newtheorem*{thmbifunctor}{Theorem \ref{thm:bifunctor}}
\newtheorem*{thmtransport}{Theorem \ref{thm:twoLinftyStrAreEqual}}
\newtheorem*{thmMC}{Corollaries \ref{cor:MCofHom} and \ref{cor:MCel}}

% AUTHOR, ETC.

\author{Daniel Robert-Nicoud}
\date{}
\title{Deformation theory with homotopy algebra structures on tensor products}
\address{Laboratoire Analyse, G\'eom\'etrie et Applications, Universit\'e Paris 13, Sorbonne Paris Cit\'e, 99 Avenue Jean Baptiste Cl\'ement, 93430 Villetaneuse, France}
\email{robert-nicoud@math.univ-paris13.fr}

% MACROS

\newcommand{\A}{\ensuremath{\mathscr{A}}}

\newcommand{\antishriek}{\text{\raisebox{\depth}{\textexclamdown}}}
\newcommand{\as}{\ensuremath{\mathit{As}}}
\newcommand{\ass}{\ensuremath{\mathit{Ass}}}
\renewcommand{\bar}{\ensuremath{\mathrm{B}}}
\newcommand{\bmanin}{\,\mbox{\tikz\draw[black,fill=black] (0,0) circle (.7ex);}\,}
\newcommand{\brt}{\ensuremath{\mathrm{BRT}}}
\newcommand{\C}{\ensuremath{\mathscr{C}}}

\newcommand{\ch}{\ensuremath{\textsf{Chain}}}

\newcommand{\com}{\ensuremath{\mathit{Com}}}

\newcommand{\End}{\ensuremath{\mathrm{End}}}
\newcommand{\g}{\ensuremath{\mathfrak{g}}}
\newcommand{\genmanin}{\ensuremath{\mathsf{M}}}
\newcommand{\genmaninDual}{\rotatebox[origin=c]{180}{$\mathsf{M}$}}
\newcommand{\genmaninns}{\ensuremath{\mathsf{M}^{\mathrm{ns}}}}
\newcommand{\genmaninnsDual}{\ensuremath{\rotatebox[origin=c]{180}{$\mathsf{M}$}^{\mathrm{ns}}}}

\newcommand{\id}{\ensuremath{\mathrm{id}}}
\newcommand{\lie}{\ensuremath{\mathit{Lie}}}

\newcommand{\manin}{\ensuremath{\mathsf{m}}}
\newcommand{\maninns}{\ensuremath{\mathsf{m}^{\mathrm{ns}}}}
\newcommand{\mc}{\ensuremath{\mathfrak{mc}}}
\newcommand{\MC}{\ensuremath{\mathrm{MC}}}

\renewcommand{\k}{\ensuremath{\mathbb{K}}}
\renewcommand{\L}{\ensuremath{\mathscr{L}}}
\renewcommand{\P}{\ensuremath{\mathscr{P}}}

\newcommand{\Q}{\ensuremath{\mathscr{Q}}}
\newcommand{\R}{\ensuremath{\mathscr{R}}}

\renewcommand{\S}{\ensuremath{\mathbb{S}}}
\newcommand{\Sh}{\ensuremath{\text{Sh}}}
\newcommand{\shriek}{\text{\raisebox{\depth}{!}}}
\newcommand{\susp}{\ensuremath{\mathscr{S}}}

\newcommand{\T}{\ensuremath{\mathcal{T}}}
\newcommand{\Tw}{\ensuremath{\mathrm{Tw}}}
\newcommand{\vdw}{\ensuremath{\mathrm{VdL}}}
\newcommand{\wmanin}{\,\mbox{\tikz\draw[black] (0,0) circle (.7ex);}\,}

% ADJUNCTION ARROWS
\makeatletter
\newcommand{\adjunction}{\@ifstar\named@adjunction\normal@adjunction}
\newcommand{\normal@adjunction}[4]{%
  % #1 : #2 <arrows> #3 : #4
  #1\colon #2%
  \mathrel{\vcenter{%
    \offinterlineskip\m@th
    \ialign{%
      \hfil$##$\hfil\cr
      \longrightharpoonup\cr
      \noalign{\kern-.3ex}
      \smallbot\cr
      \longleftharpoondown\cr
    }%
  }}%
  #3 \noloc #4%
}
\newcommand{\named@adjunction}[4]{%
  % #1 : #2 <arrows> #3 : #4
  #2%
  \mathrel{\vcenter{%
    \offinterlineskip\m@th
    \ialign{%
      \hfil$##$\hfil\cr
      \scriptstyle#1\cr
      \noalign{\kern.1ex}
      \longrightharpoonup\cr
      \noalign{\kern-.3ex}
      \smallbot\cr
      \longleftharpoondown\cr
      \scriptstyle#4\cr
    }%
  }}%
  #3%
}
\newcommand{\longrightharpoonup}{\relbar\joinrel\rightharpoonup}
\newcommand{\longleftharpoondown}{\leftharpoondown\joinrel\relbar}
\newcommand\noloc{%
  \nobreak
  \mspace{6mu plus 1mu}
  {:}
  \nonscript\mkern-\thinmuskip
  \mathpunct{}
  \mspace{2mu}
}
\newcommand{\smallbot}{%
  \begingroup\setlength\unitlength{.15em}%
  \begin{picture}(1,1)
  \roundcap
  \polyline(0,0)(1,0)
  \polyline(0.5,0)(0.5,1)
  \end{picture}%
  \endgroup
}
\makeatother

% CLASSIFICATION/KEYWORDS/THANKS

\subjclass[2010]{Primary 18D50; Secondary 08C05, 18G55.}

\keywords{Operads, homotopy Lie algebras, homotopy associative algebras, infinity-morphisms, Maurer--Cartan elements.}

\thanks{The author was supported by grants from R\'egion Ile-de-France, and the grant ANR-14-CE25-0008-01 project SAT}

\begin{document}

\begin{abstract}
	In order to solve two problems in deformation theory, we establish natural structures of homotopy Lie algebras and of homotopy associative algebras on tensor products of algebras of different types and on mapping spaces between coalgebras and algebras. When considering tensor products, such algebraic structures extend the Lie algebra or associative algebra structures that can be obtained by means of the Manin products of operads. These new homotopy algebra structures are proven to be compatible with the concepts of homotopy theory: $\infty$-morphisms and the Homotopy Transfer Theorem. We give a conceptual interpretation of their Maurer{\textendash}Cartan elements. In the end, this allows us to construct the deformation complex for morphisms of algebras over an operad and to represent the deformation $\infty$-groupoid for differential graded Lie algebras.
\end{abstract}

\maketitle

\setcounter{tocdepth}{1}
\tableofcontents

\section{Introduction}

In deformation theory, one essentially studies the spaces of Maurer--Cartan elements of differential graded Lie algebras and homotopy Lie algebras --- which for example play a crucial role in Kontsevich's celebrated proof of deformation quantization of Poisson manifolds \cite{kontsevich03}. We will give a solution to the first of the following two problems and outline a solution for the second one, which is solved in the article \cite{rn17cosimplicial} using the results of this paper in a crucial fashion.
\begin{enumerate}
	\item Given a type of algebras (encoded by an operad), what is the correct homotopy Lie algebra encoding the deformation theory of morphisms between two algebras of this type? This problem was mentioned by Kontsevich in his recent talk at the S\'eminaire Bourbaki \cite{kontsevich17}.
	\item A good model for the space of Maurer--Cartan elements of a homotopy Lie algebra is given by the Deligne--Hinich--Getzler $\infty$-groupoid of \cite{hinich97} and \cite{getzler09}. However, it is a really big object. A smaller, homotopy equivalent Kan complex was introduced by Getzler in \cite{getzler09}, but it is unfortunately difficult to manipulate directly. Is there a reasonably small, homotopically equivalent Kan complex which can be described explicitly?
\end{enumerate}
Trying to solve the first problem, we are rapidly led to mapping spaces between certain coalgebras and algebras, while our approach to solve the second problem requires a homotopy Lie algebra structure on the tensor products of certain algebras.

\medskip

Given two algebras of different types, \emph{a priori} one cannot say much about the algebraic structure induced on their tensor product. For example, there is no canonical ``easy'' structure on the tensor product of two Lie algebras. However, when the two types of algebras are related in a certain way, one is often able to get some kind of structure. One example of interest is when one of the algebras is over a binary quadratic operad, and the other one is over the Koszul dual operad. In this case, one can endow the tensor product with a natural structure of Lie algebra. This structure can in fact be interpreted in terms of the black and white Manin products for operads. A similar story is true for mapping spaces between coalgebras and algebras. In this paper we will generalize these constructions to algebras up to homotopy.

\medskip

We denote by $\L_\infty$ the operad coding homotopy Lie algebras and by $\A_\infty$ the non-symmetric operad coding homotopy (non-symmetric) associative algebras. Let $\Psi:\Q\to\P$ be a morphism of dg operads. Our first important result states that we can naturally associate to $\Psi$ a morphism from $\L_\infty$ to the tensor product of $\P$ with a quasi-free operad related to $\Q$.

\begin{mainthm}
	Let $\Psi:\Q\to\P$ be a morphism of dg operads such that $\Q$ is augmented. There is a morphism of operads
	\[
	\genmaninDual_\Psi:\L_\infty\longrightarrow\hom(\bar(\susp\otimes\Q),\P)
	\]
	which is compatible with compositions in the sense that
	\[
	\genmaninDual_\Psi(\ell) = \Psi\genmaninDual_\Theta(\ell) = \genmaninDual_\Psi(\ell)\bar(\susp\otimes\Theta)
	\]
	for any $\ell\in\L_\infty$.
	
	Dually, if we also suppose that $\Q(n)$ is finite dimensional for all $n\ge0$, there is a morphism of operads
	\[
	\genmanin_\Psi:\L_\infty\longrightarrow\P\otimes\Omega((\susp^{-1})^c\otimes\Q^\vee)\ ,
	\]
	with a similar compatibility with respect to compositions.
\end{mainthm}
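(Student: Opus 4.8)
The plan is to reduce the construction of $\genmaninDual_\Psi$ to the production of a single Maurer--Cartan element, and then to write that element down explicitly from $\Psi$. Recall that $\L_\infty=\Omega(\lie^{\antishriek})$ is the cobar construction of the Koszul dual cooperad of $\lie$, and that $\lie^{\antishriek}$ is, up to a cooperadic suspension, the cocommutative cooperad $\cocom$, so that each $\lie^{\antishriek}(n)$ is one dimensional. By the universal property of the cobar construction, for any dg operad $\mathcal{O}$ one has natural bijections
\[
\mathrm{Hom}_{\text{dg op}}(\L_\infty,\mathcal{O})\;\cong\;\Tw(\lie^{\antishriek},\mathcal{O})\;=\;\MC\big(\hom_\S(\lie^{\antishriek},\mathcal{O})\big),
\]
the right-hand side being the set of Maurer--Cartan elements of the operadic convolution Lie algebra. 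Taking $\mathcal{O}=\hom(\bar(\susp\otimes\Q),\P)$, the first statement reduces to exhibiting a canonical, $\Psi$-natural Maurer--Cartan element $\alpha_\Psi$ in $\hom_\S(\lie^{\antishriek},\hom(\bar(\susp\otimes\Q),\P))$.

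To build $\alpha_\Psi$ I would describe the image of each generator directly. The hypothesis that $\Q$ is augmented is exactly what makes the bar construction $\bar(\susp\otimes\Q)$ available, and its underlying graded cooperad is cofree on suspended decorated trees. The operad structure $\gamma_{\susp\otimes\P}$ assembles into a total tree-evaluation map $\bar(\susp\otimes\P)(n)\to(\susp\otimes\P)(n)=\susp(n)\otimes\P(n)$ (desuspend each vertex, then compose along the tree), which on the weight-one part recovers the universal twisting morphism. Pulling this back along $\bar(\susp\otimes\Psi)$ and then contracting the one-dimensional operadic suspension $\susp(n)$ against the cooperadic suspension carried by $\lie^{\antishriek}(n)$ yields, for the generator of $\lie^{\antishriek}(n)$, a degree $-1$ map $\bar(\susp\otimes\Q)(n)\to\P(n)$: it sends a decorated tree to the element of $\P(n)$ obtained by applying $\Psi$ to its decorations and composing them with $\gamma_\P$. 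This suspension matching is the algebraic shadow of the Manin-product pairing, and is what forces the construction to land on $\L_\infty$ rather than on another cofibrant operad.

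The crux of the proof is the verification that $\alpha_\Psi$ satisfies $\partial\alpha_\Psi+\tfrac12[\alpha_\Psi,\alpha_\Psi]=0$, equivalently that these images obey the $\L_\infty$-relations. The differential $\partial$ is induced by the internal differentials of $\Q$ and $\P$ together with the bar differential on $\bar(\susp\otimes\Q)$, which recomposes a single internal edge via $\gamma_{\susp\otimes\Q}$; the bracket $[\alpha_\Psi,\alpha_\Psi]$ is governed by the cooperadic decomposition of $\lie^{\antishriek}\cong\susp^c\cocom$ (splitting a corolla in two, whence the antisymmetry) combined, through the convolution operad, with the decomposition of $\bar(\susp\otimes\Q)$ and the composition $\gamma_\P$. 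The decisive input is that $\Psi$ is a morphism of dg operads, so it commutes with the differentials and intertwines $\gamma_\Q$ with $\gamma_\P$; this makes the edge-recomposition terms coming from $\partial$ cancel against the two-vertex splitting terms coming from the bracket, exactly as in the proof that the universal twisting morphism of a bar construction is itself a twisting morphism. I expect the genuine labor to be the bookkeeping of the Koszul signs and of the repeated suspension contractions so that the two sums match term by term; this is the main obstacle, everything else being formal.

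Finally, the stated compatibility with compositions is a naturality statement. Since $\alpha_\Psi$ is linear in $\Psi$ and assembled from the natural tree-evaluation and bar constructions, post-composition with an operad morphism acts on the target through $\hom(\id,-)$, producing the factor $\Psi$, while the functoriality of $\bar(\susp\otimes-)$ in a morphism $\Theta$ produces the factor $\bar(\susp\otimes\Theta)$ on the source; both commute with $\genmaninDual$ by construction. For the dual statement, the hypothesis that each $\Q(n)$ is finite dimensional gives a natural isomorphism of operads $\hom(\bar(\susp\otimes\Q),\P)\cong\P\otimes(\bar(\susp\otimes\Q))^\vee$, and the linear dual of the bar construction is the cobar construction, $(\bar(\susp\otimes\Q))^\vee\cong\Omega((\susp^{-1})^c\otimes\Q^\vee)$; transporting $\genmaninDual_\Psi$ across this isomorphism yields $\genmanin_\Psi$ together with the same naturality.
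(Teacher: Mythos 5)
Your reduction of the first statement, via Theorem \ref{thm:rosettaStone}, to the production of a single Maurer--Cartan element $\alpha_\Psi\in\Tw\big(\lie^{\antishriek},\hom(\bar(\susp\otimes\Q),\P)\big)$ is a sound strategy; it is in fact the alternative route sketched in the paper in the remark following Theorem \ref{thm:mainThmDual} (remarked independently in \cite{wierstra16}), whereas the paper's own proof defines $\genmaninDual_\Psi$ on the generators $\ell_n$, extends by freeness of $\L_\infty$ as a graded operad, and verifies commutation with the differentials by a direct sign computation on the weight-one and weight-two parts of $\bar(\susp\otimes\Q)$. The problem is the element you actually write down: you send the generator of $\lie^{\antishriek}(n)$ to the \emph{full} tree-evaluation map, which applies $\Psi$ to all the decorations of a bar-construction tree and composes them with $\gamma_\P$. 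That element is not a twisting morphism. A weight-$k$ element of $\bar(\susp\otimes\Q)(n)$ with vertex decorations $q_1,\ldots,q_k$ has degree $k+1-n+\sum_i|q_i|$, while its image under tree evaluation has degree $\sum_i|q_i|$, so the weight-$k$ component of your element has degree $-k$ in the convolution algebra: it is inhomogeneous, and only its weight-one component has the degree $-1$ required of a Maurer--Cartan element. Equivalently, sending $\ell_n$ (of degree $n-2$) to the full tree-evaluation map does not even preserve degree, since the weight-$k$ component of that map has degree $n-1-k$. This also clashes with your own verification sketch: the cancellation you invoke --- the edge-recomposition term $\alpha_\Psi\circ d_2$ against the corolla-splitting term of $\tfrac{1}{2}[\alpha_\Psi,\alpha_\Psi]$, both concentrated in weight two --- occurs precisely when $\alpha_\Psi$ is supported on weight one; the higher-weight components of the full tree evaluation produce terms in every weight with nothing to cancel them. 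The repair is to take only the weight-one part: the generator must go to (a suspended version of) $\Psi(n)$ precomposed with the projection $\bar(\susp\otimes\Q)(n)\to s\susp_n\Q(n)$, i.e.\ the twisting morphism $\psi=\Psi\circ\pi:\bar\Q\to\P$ transported through $\susp^c\otimes\bar\Q\cong\bar(\susp\otimes\Q)$. This is exactly the paper's $\genmaninDual_\Psi(\ell_n)$, and with it your Maurer--Cartan verification and naturality arguments go through.

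There is a second, smaller gap in the dual statement. The isomorphism $\hom(\bar(\susp\otimes\Q),\P)\cong\P\otimes\bar(\susp\otimes\Q)^\vee$ you invoke requires $\bar(\susp\otimes\Q)$ to be finite dimensional in each arity, and finite dimensionality of the $\Q(n)$ alone does not guarantee this: if $\overline{\Q}(0)\neq0$ or $\overline{\Q}(1)\neq0$, then $\bar(\susp\otimes\Q)(n)$ contains trees with arbitrarily many nullary or unary vertices and is infinite dimensional. So the transport argument proves Theorem \ref{thm:mainThm} only for $\Q$ reduced, while the statement assumes only arity-wise finite dimensionality. This is precisely why the paper gives a second, direct proof of Theorem \ref{thm:mainThm}: it encodes $\Psi$ as the family of $\S_n$-invariant elements $\Psi_n\in\P(n)\otimes\Q(n)^\vee$ and characterizes the morphism property by the two identities of Lemma \ref{lemma:technicalLemma}, from which the compatibility of $\genmanin_\Psi$ with the differentials is checked directly, with no reducedness assumption.
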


When $\Q$ is binary Koszul, the last operad $\Omega((\susp^{-1})^c\otimes\Q^\vee)$ is nothing but the Koszul resolution $\Q^{\shriek}_\infty$ of the Koszul dual of $\Q$. If the operads are non-symmetric, then the morphisms have the operad $\A_\infty$ as domain, instead of the operad $\L_\infty$. In particular, this tells us that if we are given an algebra over the first operad $\P$ and a second algebra over the second operad $\Q^{\shriek}_\infty$, then we can endow their tensor product with a natural structure of an $\L_\infty$-algebra --- or an $\A_\infty$-algebra in the non-symmetric case. We denote by $\otimes^\Psi$ this structure. Dually, we associate to $\Psi$ a morphism from $\L_\infty$ to the convolution operad of a cooperad related to $\Q$ and $\P$. Therefore, the mapping space between a $\bar(\susp\otimes\Q)$-coalgebra and a $\P$-algebra is an $\L_\infty$-algebra, respectively an $\A_\infty$-algebra in the ns case, which we denote by $\hom^\Psi$.

\medskip

An interesting example is the following one. Let $A$ be a Lie algebra, and let $C$ be a commutative algebra up to homotopy. Then we obtain a natural $\L_\infty$-algebra structure on their tensor product $A\otimes C$. This structure has already appeared in the literature in the article \cite{turchin15} by V. Turchin and T. Willwacher on Hochschild--Pirashvili homology.

\medskip

Let us now focus on the case of binary quadratic operads. When working with algebras up to homotopy, there is a natural extension of the notion of morphisms of algebras called $\infty$-morphisms. They are a sensible homotopical generalization of strict morphisms and they play a crucial role in Kontsevich's work \cite{kontsevich03}. We show that our operator $\otimes^\Psi$ is functorial not only with respect to strict morphisms of algebras, but also with respect to $\infty$-morphisms on the second algebra. All together, the above mentioned results give the following theorem.

\begin{thmbifunctor}
	The above mentioned $\L_\infty$-algebra structure on the tensor product of a $\P$-algebra and a $\Q^{\shriek}_\infty$-algebra induces a bifunctor
	\[
	\otimes^\Psi:\P\text{-}\mathsf{alg}\times\infty\text{-}\Q^{\shriek}_\infty\text{-}\mathsf{alg}\longrightarrow\infty\text{-}\L_\infty\text{-}\mathsf{alg}\ ,
	\]
	where $\infty\text{-}\L_\infty\text{-}\mathsf{alg}$ denotes the category of $\L_\infty$-algebras with their $\infty$-morphisms, and similarly for $\infty\text{-}\Q^{\shriek}_\infty\text{-}\mathsf{alg}$.
\end{thmbifunctor}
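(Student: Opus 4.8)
The plan is to establish the bifunctoriality of $\otimes^\Psi$ by reducing everything to the operadic morphism $\genmanin_\Psi:\L_\infty\to\P\otimes\Q^{\shriek}_\infty$ provided by the Main Theorem, and then invoking the standard theory of $\infty$-morphisms as morphisms of cofree conilpotent coalgebras over the bar construction. First I would verify that the assignment is well-defined on objects: given a $\P$-algebra $A$ and a $\Q^{\shriek}_\infty$-algebra $B$, the morphism $\genmanin_\Psi$ together with the two algebra structures $\P\to\End_A$ and $\Q^{\shriek}_\infty\to\End_B$ yields a composite $\L_\infty\to\P\otimes\Q^{\shriek}_\infty\to\End_A\otimes\End_B\to\End_{A\otimes B}$, where the last map is the canonical Künneth-type morphism of operads sending $\mu\otimes\nu$ to the interleaved tensor of operations. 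This composite is precisely the $\L_\infty$-algebra structure $A\otimes^\Psi B$ we wish to functorialize.

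The core of the argument is functoriality in the second variable with respect to $\infty$-morphisms. An $\infty$-morphism $B\rightsquigarrow B'$ of $\Q^{\shriek}_\infty$-algebras is, by definition, a morphism of the associated cofree coalgebras intertwining the codifferentials, equivalently a Maurer--Cartan-type element in a convolution algebra. I would show that tensoring such data on the left with the fixed $\P$-algebra $A$ produces an $\infty$-morphism $A\otimes^\Psi B\rightsquigarrow A\otimes^\Psi B'$ of $\L_\infty$-algebras. The key computational point is that the coalgebra map underlying the $\infty$-morphism of $\Q^{\shriek}_\infty$-algebras is built from a family of maps indexed by the arities of $\Q^\vee$ (dually, $\Q$), and because $\genmanin_\Psi$ is \emph{compatible with compositions} in the sense of the Main Theorem, applying $A\otimes(-)$ transports these component maps coherently into the cofree $\L_\infty$-coalgebra on $A\otimes B$. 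Concretely, I would check that the resulting family of maps $(A\otimes B)^{\odot n}\to A\otimes B'$ (suitably symmetrized) assembles into a coalgebra morphism commuting with the codifferentials, where the commutation follows from the compatibility relation $\genmanin_\Psi(\ell)=\Psi\,\genmanin_\Theta(\ell)=\genmanin_\Psi(\ell)\,\Omega((\susp^{-1})^c\otimes\Theta)$ applied to the generating brackets $\ell\in\L_\infty$.

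The remaining verifications are the functor axioms: that identities map to identities and that composition of $\infty$-morphisms is respected, i.e. $A\otimes^\Psi(g\circ f)=(A\otimes^\Psi g)\circ(A\otimes^\Psi f)$ as $\infty$-morphisms of $\L_\infty$-algebras. Both follow formally once the construction is phrased at the level of coalgebra morphisms, since composition of $\infty$-morphisms is literally composition of the underlying coalgebra maps, and the left tensor factor $A$ is inert under this composition. Strict functoriality in the first variable (with respect to honest $\P$-algebra morphisms $A\to A'$) is the easier half and reduces to naturality of the Künneth morphism $\End_A\otimes\End_B\to\End_{A\otimes B}$ together with the observation that a strict morphism induces a strict, hence \emph{a fortiori} an $\infty$-, morphism after tensoring.

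The main obstacle I anticipate is the careful bookkeeping in the second bullet: verifying that the left-tensored family of maps genuinely commutes with the codifferentials on both sides as a single coalgebra morphism, rather than merely componentwise. This requires unwinding the explicit formula for the $\L_\infty$-structure $\otimes^\Psi$ coming from $\genmanin_\Psi$, and tracking how the (co)operadic (de)compositions in $\Omega((\susp^{-1})^c\otimes\Q^\vee)$ interact with the comultiplication of the cofree coalgebra. The compatibility-with-compositions statement of the Main Theorem is precisely the lever that makes this work, so the proof will hinge on translating that operadic identity into the statement that the tensoring operation $A\otimes(-)$ is a functor from the convolution-algebra description of $\infty$-$\Q^{\shriek}_\infty$-morphisms to that of $\infty$-$\L_\infty$-morphisms.
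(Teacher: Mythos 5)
Your overall architecture matches the paper's: you define the tensored map as the unique morphism of cofree coalgebras $\bar_\iota(A\otimes^\Psi C)\to\bar_\iota(A'\otimes^\Psi C')$ extending an explicit family of components, check commutation with the codifferentials by projecting onto the cogenerators, and then verify the functor axioms (this is exactly the content of Proposition \ref{prop:compatibilityMorphisms}). However, the tool you designate as the ``lever'' for the crucial step is the wrong one. The compatibility-with-compositions clause of Theorem \ref{thm:mainThm} concerns a second morphism of \emph{operads} $\Theta:\R\to\Q$, and relates $\genmanin_{\Psi\Theta}$, $\genmanin_\Theta$, and $\genmanin_\Psi$; it says nothing about morphisms of \emph{algebras} over the fixed operads. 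An $\infty$-morphism $g:C\rightsquigarrow C'$ of $\Q^{\shriek}_\infty$-algebras is not induced by any morphism of operads --- it is a morphism of $(\Q^{\shriek})^{\antishriek}$-coalgebras, equivalently a family of maps $g_n:(\Q^{\shriek})^{\antishriek}(n)\otimes_{\S_n}C^{\otimes n}\to C'$ subject to relations involving the bar differentials --- so there is no $\Theta$ to which the relation $\genmanin_{\Psi\Theta}=(\Psi\otimes1)\genmanin_\Theta=(1\otimes\Omega((\susp^{-1})^c\otimes\Theta^\vee))\genmanin_\Psi$ could be applied. That identity cannot yield the statement that $f\otimes^\Psi g$ commutes with the codifferentials, which is the heart of the theorem.

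What the verification actually requires is the element-level characterization of $\Psi$ from Lemma \ref{lemma:technicalLemma}: the invariant elements $\Psi_n=\sum_i p_i\otimes q_i^\vee\in\P(n)\otimes\Q^\vee(n)$ satisfying equations (\ref{eqn:eqn1}) and (\ref{eqn:eqn2}), which encode $d_\P\Psi=\Psi d_\Q$ and $\gamma_\P(\Psi\circ\Psi)=\Psi\gamma_\Q$. In the paper's proof, both sides of $(f\otimes^\Psi g)\circ\delta=\delta\circ(f\otimes^\Psi g)$ are computed after projection onto $A'\otimes C'$, and equation (\ref{eqn:eqn2}) is used (twice) to re-associate the operadic compositions of the $p_i$'s against the decompositions of the $q_i^\vee$'s; only after this rewriting can one recognize on one side the term $\mathrm{proj}\circ g\circ d_{\bar_\iota(C)}$ and on the other $\mathrm{proj}\circ d_{\bar_\iota(C')}\circ g$, whose equality is precisely the defining property of $g$ being an $\infty$-morphism, with the strictness of $f$ entering to commute $f$ past the $\P$-algebra structures. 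Your claim that the composition axiom is then ``formal'' is also too optimistic: the composite $(f'\otimes^\Psi g')\circ(f\otimes^\Psi g)$ mixes the components $g'_k$ and $g_{n_j}$ through the comultiplication of the middle coalgebra, and identifying the result with the components of $(f'\circ f)\otimes^\Psi(g'\circ g)$ again invokes equation (\ref{eqn:eqn2}); it is a computation structurally parallel to the first one, not a consequence of the left factor being ``inert.'' In short, your proposal is missing the actual engine of the proof --- Lemma \ref{lemma:technicalLemma} --- and the substitute it offers does not apply to the situation at hand.
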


Next, we study the compatibility of these new structures with an important tool in homotopical algebra: the Homotopy Transfer Theorem, which tells us that, given a retraction of chain complexes
\begin{center}
	\begin{tikzpicture}
		\node (a) at (0,0){$B$};
		\node (b) at (2,0){$C$};
		
		\draw[->] (a)++(.3,.1)--node[above]{\mbox{\tiny{$p$}}}+(1.4,0);
		\draw[<-,yshift=-1mm] (a)++(.3,-.1)--node[below]{\mbox{\tiny{$i$}}}+(1.4,0);
		\draw[->] (a) to [out=-150,in=150,looseness=4] node[left]{\mbox{\tiny{$h$}}} (a);
	\end{tikzpicture}
\end{center}
and a structure of an algebra of a certain kind on $B$, then there is a coherent way to induce a homotopically equivalent structure of the same algebra but now up to homotopy on $C$. Our construction $\otimes^\Psi$ of an $\L_\infty$-algebra structure is compatible with the Homotopy Transfer Theorem in the following sense. Let $A$ be a $\P$-algebra and take a $\Q^{\shriek}$-algebra $B$ together with the data of a retraction from $B$ to a subcomplex $C$. Then there are two natural ways to endow $A\otimes C$ with an $\L_\infty$-algebra structure: one can either pull back the natural $\P\otimes\Q^{\shriek}$-algebra structure on the tensor product $A\otimes B$ to a Lie algebra structure and then use the Homotopy Transfer Theorem, or one can first use the Homotopy Transfer Theorem to obtain a $\Q^{\shriek}_\infty$-algebra structure on $C$ and then pull back the resulting algebraic structure on $A\otimes C$ using Theorem \ref{thm:mainThm}.

\begin{thmtransport}
	The two $\L_\infty$-algebra structures thus obtained on the tensor product $A\otimes C$ are equal.
\end{thmtransport}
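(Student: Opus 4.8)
The plan is to make both $\L_\infty$-structures on $A\otimes C$ fully explicit through the tree-level formula of the operadic Homotopy Transfer Theorem and then to match them. Write $\mu_A\colon\P\to\End_A$ and $\mu_B\colon\Q^{\shriek}\to\End_B$ for the two strict structure morphisms, and recall that $A\otimes B$ carries the dg Lie algebra structure obtained by evaluating $\genmanin_\Psi$ on the binary generator of $\L_\infty$; its bracket has the form $\beta=\sum_\lambda\mu_A(p_\lambda)\otimes\mu_B(q_\lambda)$, a finite sum of tensor products of binary operations. Transferring along the deformation retract $\id_A\otimes(i,p,h)$ of $A\otimes B$ onto $A\otimes C$, the resulting bracket $\ell^{(1)}_n$ is the sum over binary trees $T$ with $n$ leaves of the composite putting $\beta$ at every vertex, $\id_A\otimes h$ on every internal edge, $\id_A\otimes i$ at the leaves and $\id_A\otimes p$ at the root. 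Since the homotopy is the identity on the first factor, each term factorises, giving
\[
\ell^{(1)}_n=\sum_{T,\lambda}\pm\,\mu_A\bigl(P_{T,\lambda}\bigr)\otimes\bigl(p\,T(\mu_B,h)\,i^{\otimes n}\bigr),
\]
where $P_{T,\lambda}\in\P(n)$ is the corresponding tree-monomial and the $\Q^{\shriek}$-factor is exactly a term of the transfer formula on $B$. On the other side, the transferred $\Q^{\shriek}_\infty$-structure $\nu$ on $C$ has operations $\nu_m=\sum_S\pm\,p\,S(\mu_B,h)\,i^{\otimes m}$, and $\ell^{(2)}_n$ is obtained by evaluating $\genmanin_\Psi(\ell_n)=\sum_\alpha p_\alpha\otimes\tau_\alpha$ through $\mu_A$ on the $\P$-factor and through the operad morphism $\Q^{\shriek}_\infty\to\End_C$ determined by $\nu$ on the $\Q^{\shriek}_\infty$-factor.

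The obstruction to a naive term-by-term comparison, and the step I expect to be the main obstacle, is the following decoration mismatch. In $\ell^{(1)}_n$ every internal edge of the big binary tree carries the homotopy $h$. In $\ell^{(2)}_n$, however, evaluating the operad morphism $\Q^{\shriek}_\infty\to\End_C$ composes the transferred operations $\nu_m$ by plugging outputs into inputs, so $h$ appears only on the edges internal to a single $\nu_m$, while the edges gluing two operations together along $\tau_\alpha$ carry instead $i\circ p=\id_B-d_Bh-hd_B$. The two tree sums therefore cannot agree term by term: the ``seam'' factors $i\circ p$ on the one side must be traded against $h$ on \emph{all} edges on the other, and equality can only come from a global cancellation organised by the relations packaged in $\genmanin_\Psi$.

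Rather than carrying out this cancellation directly, I would lift the comparison to $\infty$-morphisms, where the seam factors do not appear. The Homotopy Transfer Theorem also produces an $\infty$-quasi-isomorphism $i_\infty\colon(C,\nu)\rightsquigarrow(B,\mu_B)$ extending $i$, as well as an $\infty$-quasi-isomorphism $I^{(1)}\colon(A\otimes C,\ell^{(1)})\rightsquigarrow(A\otimes B)$ with linear part $\id_A\otimes i$. Applying the bifunctor of Theorem~\ref{thm:bifunctor} to $i_\infty$ gives a second $\infty$-quasi-isomorphism $I^{(2)}=\otimes^\Psi(\id_A,i_\infty)\colon(A\otimes C,\ell^{(2)})\rightsquigarrow(A\otimes B)$, again with linear part $\id_A\otimes i$. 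The point is that both morphisms land in the \emph{strict} Lie algebra $A\otimes B$, and $i_\infty$ already outputs into $B$; consequently the $\otimes^\Psi$-formula computing the Taylor components of $I^{(2)}$ involves only $h$ on internal edges, with no $i\circ p$ seams, so it has the same shape as the HTT formula for $I^{(1)}$. I would then identify $I^{(1)}=I^{(2)}$ as coalgebra morphisms by unravelling the bifunctor against the transfer formula on $A\otimes B$: the combinatorics relating the two is precisely that recorded by the operad morphism $\genmanin_\Psi$ (equivalently, the compatibility of $\genmanin_\Psi$ with the cobar differential), together with the side conditions $p\,i=\id_C$, $h^2=0$, $p\,h=0$, $h\,i=0$.

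Once the two $\infty$-morphisms are recognised as a single coalgebra morphism $F$, the conclusion is formal. As an $\infty$-morphism out of $(A\otimes C,\ell^{(1)})$ it satisfies $F\,D_{\ell^{(1)}}=D_{\mathrm{Lie}}\,F$, and as an $\infty$-morphism out of $(A\otimes C,\ell^{(2)})$ it satisfies $F\,D_{\ell^{(2)}}=D_{\mathrm{Lie}}\,F$, whence $F\circ(D_{\ell^{(1)}}-D_{\ell^{(2)}})=0$. Since the linear part $\id_A\otimes i$ is injective and, in characteristic zero, symmetric powers preserve injections, $F$ is a monomorphism of cofree conilpotent coalgebras; therefore $D_{\ell^{(1)}}=D_{\ell^{(2)}}$, that is $\ell^{(1)}=\ell^{(2)}$, as claimed. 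The genuine content thus concentrates entirely in the penultimate paragraph — the identification $I^{(1)}=I^{(2)}$ — which reduces the $i\circ p$-versus-$h$ discrepancy to a clean matching of tree formulas through $\genmanin_\Psi$.
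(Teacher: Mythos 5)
Your strategy has a genuine gap, and the detour that creates it is motivated by a misreading of the construction. The ``decoration mismatch'' you identify as the main obstacle does not exist: the morphism $\genmanin_\Psi$ sends each generator $\ell_n$ of $\L_\infty$ into $\P(n)\otimes s^{-1}(\susp^{-1})^c\otimes\Q^\vee(n)$, i.e.\ the second tensor factor is always a \emph{single corolla generator} of $\Q^{\shriek}_\infty=\Omega((\susp^{-1})^c\otimes\Q^\vee)$, never a composite tree $\tau_\alpha$ of generators. Consequently $\ell^{(2)}_n$ is computed by one single application of the transferred twisting morphism $\varphi_C$ to one generator, $\ell^{(2)}_n=\sum_i\pm\,\rho_A(p_i)\otimes\varphi_C(\susp_n^{-1}q_i^\vee)$, and since $B$ is a strict binary $\Q^{\shriek}$-algebra the Van der Laan formula for $\varphi_C$ is a sum over \emph{binary} trees with $h$ on every internal edge, $i$ at the leaves and $p$ at the root: no factor $i\circ p$ ever appears, and no composition of transferred operations $\nu_m$ ever takes place. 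The two tree sums therefore have exactly the same shape and \emph{can} be compared term by term; that comparison is the actual proof. It reduces, after factoring the Van der Laan map as $\vdw_{A\otimes C}=(\gamma_{\End_A}\otimes\vdw_C)\Phi$ and commuting $\Phi$ past $\T^c(s\varphi_{A\otimes B})$, to the identity $\sum_i q_i\otimes\tilde{\Delta}^\tau(q_i^\vee)=\sum_{i_1,\ldots,i_{n-1}}\tilde{\gamma}_\Q\big(\tau(f_{i_1},\ldots,f_{i_{n-1}})\big)\otimes\tau(f_{i_1},\ldots,f_{i_{n-1}})^\vee$, i.e.\ to writing the composition map of $\Q$ on a binary tree in two ways.

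Because you deemed the direct comparison impossible, you route everything through the identification $I^{(1)}=I^{(2)}$ of two $\infty$-morphisms into $A\otimes B$ --- and then defer exactly that identification (``I would then identify \ldots by unravelling the bifunctor against the transfer formula''). That is where the entire content of the theorem sits, and it is not easier than the direct comparison: spelling out the Taylor components, $I^{(1)}=I^{(2)}$ requires the same factorization of the Van der Laan map and the same two-ways-of-composing identity in $\Q$, just with $h$ at the root in place of $p$. (In the paper the corresponding statement, $(1\otimes i)_\infty=1\otimes^\Psi i_\infty$, is obtained as a by-product \emph{after} the equality of structures, not as a stepping stone toward it.) So, as written, your proof assumes its only nontrivial step. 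The surrounding scaffolding is sound: granting $I^{(1)}=I^{(2)}=F$ as coalgebra maps, the injectivity of $F$ (from injectivity of $\id_A\otimes i$, the coradical filtration, and the fact that symmetric powers preserve injections in characteristic zero) does force $D_{\ell^{(1)}}=D_{\ell^{(2)}}$. One further caveat: the side conditions $h^2=0$, $ph=0$, $hi=0$ you invoke are not hypotheses of the theorem, which assumes only a retraction, so the missing key step must be carried out without them.
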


Let us now go back to the case of algebras over arbitrary operads. After using Theorem \ref{thm:mainThmDual} or its dual version, Theorem \ref{thm:mainThm}, to endow a mapping space $\hom(D,A)$, respectively a tensor product $A\otimes C$ with an $\L_\infty$-algebra structure, it is a natural question to ask what are the Maurer--Cartan elements of the resulting $\L_\infty$-algebra. To any morphism $\Psi$ of dg operads, one can naturally associate a twisting morphism $\psi\in\Tw(\bar\Q,\P)$, and thus a complete cobar construction
\[
\widehat{\Omega}_\psi:\mathsf{dg}\widehat{\P}\text{-}\mathsf{alg}\longrightarrow\mathsf{dg}\bar\Q\text{-}\mathsf{cog}\ .
\]
This is a modification of a classical construction, where we go from $\bar\Q$-co-al\-ge\-bras to \emph{complete} $\P$-algebras instead than from \emph{conilpotent} $\bar\Q$-coalgebras to $\P$-algebras. Using this, we are able to prove the following statements.

\begin{thmMC}
	Let $A$ be a $\P$-algebra, let $C$ be a finite dimensional $\Q^{\shriek}_\infty$-algebra, and let $D$ be a $\bar(\susp\otimes\Q)$-coalgebra. If $A$ is a complete $\P$-algebra, there is a natural bijection
	\[
	\MC(\hom^\Psi(D,A))\cong\hom_{\mathsf{dg}\P\text{-}\mathsf{alg}}(\widehat{\Omega}_\alpha(s^{-1}D),A)\ .
	\]
	Similarly, if $A$ is any $\P$-algebra, but if we assume that $D$ is conilpotent, then there is a natural bijection
	\[
	\MC(\hom^\Psi(D,A))\cong\hom_{\mathsf{dg}\P\text{-}\mathsf{alg}}(\Omega_\alpha(s^{-1}D),A)\ .
	\]
	Dually, assuming that $A$ is complete, there is a natural bijection
	\[
	\MC(A\otimes^\Psi C)\cong\hom_{\mathsf{dg}\P\text{-}\mathsf{alg}}(\widehat{\Omega}_\alpha(s^{-1}C^\vee),A)\ .
	\]
\end{thmMC}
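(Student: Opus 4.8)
The plan is to recognize each of these $\L_\infty$-algebras as a \emph{convolution} $\L_\infty$-algebra relative to the twisting morphism $\alpha$ induced by $\Psi$, and then to invoke the fundamental correspondence between Maurer--Cartan elements, twisting morphisms, and morphisms out of the (complete) cobar construction. First I would unwind the definition of a Maurer--Cartan element of $\hom^\Psi(D,A)$. By construction the $\L_\infty$-operations on this space are the images under $\genmaninDual_\Psi\colon\L_\infty\to\hom(\bar(\susp\otimes\Q),\P)$ of the generators of $\L_\infty$, evaluated in the convolution algebra associated with the $\bar(\susp\otimes\Q)$-coalgebra $D$ and the $\P$-algebra $A$. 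Thus a Maurer--Cartan element is a degree $-1$ map $\varphi\colon D\to A$ satisfying $\sum_{n\ge1}\tfrac{1}{n!}\,\ell_n^\Psi(\varphi,\dots,\varphi)=0$, where each $\ell_n^\Psi(\varphi,\dots,\varphi)$ is computed by applying the decomposition map of $D$, then $\genmaninDual_\Psi$ of the $n$-ary generator together with $\varphi^{\otimes n}$, and finally the structure map of $A$. Completeness of $A$ is what guarantees that this a priori infinite sum converges.

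The central step is to show that this Maurer--Cartan equation is exactly the twisting-morphism equation relative to $\alpha$, that is, $\MC(\hom^\Psi(D,A))\cong\Tw_\alpha(s^{-1}D,A)$. Here I would use the compatibility with compositions from Theorem \ref{thm:mainThmDual}, namely $\genmaninDual_\Psi(\ell)=\Psi\,\genmaninDual_\Theta(\ell)$, which says that the brackets $\ell_n^\Psi$ are obtained from the universal ones by post-composition with $\Psi$. This identifies the convolution $\L_\infty$-structure of $\hom^\Psi(D,A)$ with the one built from the twisting morphism $\alpha$ associated with $\psi$, so that the various $\ell_n^\Psi$-terms reassemble, after the desuspension $s^{-1}$, into the convolution expression $\partial(\varphi)+\varphi\star_\alpha\varphi$ whose vanishing defines a twisting morphism. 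The hard part will be precisely here: one must match the abstractly defined higher brackets --- with all their suspension signs and arity-wise symmetrizations --- term by term with the convolution product determined by the cooperad structure of $\bar(\susp\otimes\Q)$ and by $\alpha$, and this is where careful sign and degree bookkeeping is unavoidable.

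Once the identification with twisting morphisms is in hand, the first two bijections follow from the universal property of the cobar construction. Twisting morphisms relative to $\alpha$ from $s^{-1}D$ to $A$ are in natural bijection with morphisms of dg $\P$-algebras out of the cobar construction. In the complete setting introduced before the statement, this universal property holds for the complete cobar $\widehat{\Omega}_\alpha$ with target an arbitrary complete $\P$-algebra $A$ and no conilpotency hypothesis on $D$, giving the first bijection; if instead $D$ is conilpotent, the ordinary cobar $\Omega_\alpha$ suffices and $A$ need not be complete, giving the second. I would then verify that the bijection is natural in $D$ and $A$ by checking that it commutes with the restriction and corestriction maps.

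For the tensor-product statement I would reduce to the mapping-space case by duality. Since $C$ is finite dimensional, its linear dual $C^\vee$ is a $\bar(\susp\otimes\Q)$-coalgebra --- this is exactly the coalgebra dual to the $\Q^{\shriek}_\infty=\Omega((\susp^{-1})^c\otimes\Q^\vee)$-algebra structure on $C$ --- and the canonical isomorphism $A\otimes C\cong\hom(C^\vee,A)$ carries the $\L_\infty$-structure $\otimes^\Psi$ to $\hom^\Psi(C^\vee,A)$. This last compatibility is the dual of the relation between $\genmanin_\Psi$ and $\genmaninDual_\Psi$ recorded in Theorem \ref{thm:mainThm}. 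Applying the already established bijection with $D=C^\vee$ then yields $\MC(A\otimes^\Psi C)\cong\hom_{\mathsf{dg}\P\text{-}\mathsf{alg}}(\widehat{\Omega}_\alpha(s^{-1}C^\vee),A)$, as desired.
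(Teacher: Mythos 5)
Your proposal is correct and follows essentially the same route as the paper: identify $\MC(\hom^\Psi(D,A))$ with twisting morphisms relative to the twisting morphism induced by $\Psi$ (this sign-heavy comparison is exactly the paper's main computation in Section \ref{sect:MCel}), then invoke the universal property of $\widehat{\Omega}_\alpha$ (respectively $\Omega_\alpha$ in the conilpotent case), and finally handle the tensor-product statement by dualizing $C$ to the $\bar(\susp\otimes\Q)$-coalgebra $C^\vee$. The only cosmetic slip is writing the twisting equation with a binary product $\varphi\star_\alpha\varphi$ where the paper's $\star_\alpha$ is a unary operator, but this does not affect the argument.
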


We use the second bijection to construct the deformation complex for morphisms of $\P$-algebras in Section \ref{subsection:defCplx}. The third one plays a crucial role in the article \cite{rn17cosimplicial}, where we use it to represent the Maurer--Cartan space (i.e. the Deligne--Hinich--Getzler $\infty$-groupoid) of dg Lie algebras. We outline this in Section \ref{subsection:cosimplicial}.

\subsection*{Structure of the paper}

In Section \ref{sect:recollection}, we give a not so short recollection on basic facts and constructions about operads which we use throughout the paper. In Section \ref{sect:sect2}, we state and prove the central theorem of the present article, on which rest all other results. In Section \ref{sect:sect3}, we focus on the case where the operads into play are binary quadratic and we study the compatibility of the main theorem with the Manin products and with $\infty$-morphisms of homotopy algebras. We proceed with Section \ref{sect:sect4}, where we give two ways to use the Homotopy Transfer Theorem to produce the structure of a Lie algebra up to homotopy on certain tensor products of algebras, one using Manin products and the other using our main theorem, and prove that they are actually equal. In Section \ref{sect:MCel}, we go back to general operads and study the set of Maurer--Cartan elements of the Lie algebra up to homotopy produced via the main theorem. In Section \ref{sect:defTheory}, we give two applications to deformation theory. To conclude the main body of the article, we give some explicit examples of applications of the main theorem in the dual case in Section \ref{sect:sect5}. Additionally, in Appendix \ref{sect:appendixCompletePalg} we give some basic definitions and results about complete and filtered algebras over an operad, which we need in Section \ref{sect:MCel}.

\subsection*{Notation and conventions}

Throughout this paper, we work over a fixed field $\k$ of characteristic $0$, with the remarkable exception of whenever we give the results for non-symmetric operads, where we admit any field. This is necessary, for example because we will often need to identify invariants and coinvariants for actions of the symmetric groups. We recall how this is done in general: let $G$ be a finite group, and $V$ a representation of $G$. Then the isomorphism
\[
V^G\longrightarrow V_G
\]
is given by sending an invariant $v$ to $\tfrac{1}{|G|}[v]$, where $[v]$ here denotes the class of $v$ in $V_G$. Conversely, the isomorphism
\[
V_G\longrightarrow V^G
\]
is given by sending $[v]$ to $\sum_{g\in G}g\cdot v$, the sum of the elements of the orbit of a representative of $[v]$ under the action of $G$.

\medskip

We always work over chain complexes unless otherwise specified. In particular, Maurer--Cartan elements of Lie algebras and $\L_\infty$-algebras are of degree $-1$. To take care of signs, we adopt the Koszul convention and the Koszul sign rule, see \cite[Sect. 1.5.3]{vallette12}, and make heavy use of them throughout the paper.  All chain complexes are $\mathbb{Z}$-graded.

\medskip

We denote the symmetric group on $n$ elements by $\S_n$.

\medskip

If $V$ is a chain complex, we denote by $V^\vee$ its linear dual chain complex. It is given by $(V^\vee)_n\coloneqq (V_{-n})^\vee$. Its differential is equal to $d_{V^\vee}\coloneqq -d_V^\vee$, so that the natural pairing
\[
\langle\ ,\ \rangle:V^\vee\otimes V\longrightarrow\k
\]
is a morphism of chain complexes, where the base field $\k$ is seen as a chain complex concentrated in degree $0$. More generally, if $V,W$ are two chain complexes, we will denote by $\hom(V,W)$ the internal hom in chain complexes. Its degree $n$ elements are the linear maps of degree $n$ from $V$ to $W$, and the differential is given by
\[
\partial(\phi)\coloneqq d_W\phi - (-1)^{|\phi|}\phi d_V
\]
on homogeneous elements.

\subsection*{Acknowledgments}

I thank the anonymous referee for his or her useful comments and suggestions. I am as always also extremely grateful to my advisor Bruno Vallette for the constant support, the always relevant comments and corrections, and the many useful discussions.

\section{Recollection on operads} \label{sect:recollection}

In this section, we give a recollection on various notions in operad theory. We make explicit certain objects that we will need in what follows, such as the structure of algebra over the convolution operad for the mapping space of a coalgebra and an algebra. We also introduce some new notations that we will use throughout the paper. We try to stay as close as possible to the conventions of the book \cite{vallette12}.

\subsection{Operads}

For more details about $\S$-modules, the definition of operads, and algebras over operads, see the book \cite[Sect. 5.1--3]{vallette12}.

\begin{definition}
	An $\S$-module over the field $\k$ is a collection
	\[
	M = (M(0),M(1),M(2),\ldots)
	\]
	of right $\k[\S_n]$-modules, for all $n\in\mathbb{N}$. If $\mu\in M(n)$, we say that $\mu$ has \emph{arity} $n$. A morphism of $\S$-modules $f:M\to N$ is a collection of $\S_n$-equivariant maps $f:M(n)\to N(n)$, for all $n\in\mathbb{N}$.
\end{definition}

\begin{definition}
	Let $M,N$ be two $\S$-modules. Their \emph{composite} is the $\S$-module $M\circ N$ given by
	\[
	(M\circ N)(n)\coloneqq \bigoplus_{k\ge0}M(k)\otimes_{\S_k}\left(\bigoplus_{n_1+\cdots + n_k = n}N(n_1)\otimes\cdots\otimes N(n_k)\otimes_{\S_{n_1}\times\cdots\times \S_{n_k}}\k[\S_n]\right)\ .
	\]
\end{definition}

In order to work with the elements of this object, we introduce the following notation. An element of $(M\circ N)(n)$ can always be represented by an element $\mu\in M(k)$, plus $k$ elements $\nu_1,\ldots,\nu_k$ with $\nu_i\in N(n_i)$, and a shuffle $\sigma\in\Sh(n_1,\ldots,n_k)$. We denote the resulting element by
\[
\mu\otimes(\nu_1\otimes\cdots\nu_k)^\sigma,
\]
and write $M\otimes_{k,n_1,\ldots,n_k}^\sigma N$ for the collection of all such elements. We also use the notation
\[
M\otimes_{k,n_1,\ldots,n_k}N\coloneqq\bigoplus_{\sigma\in\Sh(n_1,\ldots,n_k)}M\otimes_{k,n_1,\ldots,n_k}^\sigma N\ .
\]
Notice that the module $(M\circ N)(n)$ is given by the coinvariants with respect to the action of $\S_n$ of the direct sum of all such vector spaces with $n_1+\cdots+n_k = n$ equipped with the obvious action of $\S_n$, that is
\[
(M\circ N)(n) = \bigoplus_{k\ge0}\left(\bigoplus_{n_1+\cdots+ n_k = n}M\otimes_{k,n_1,\ldots,n_k}N\right)_{\S_n}.
\]

\begin{definition}
	The \emph{Hadamard tensor product} of two $\S$-modules $M$ and $N$ is the arity-wise tensor product
	\[
	M\underset{H}{\otimes} N(n)\coloneqq M(n)\otimes N(n)\ .
	\]
\end{definition}

In the rest of this paper, we will omit the $H$ and simply write it as $\otimes$  whenever we are talking about $\S$-modules.

\medskip

We will need the following $\S$-modules:
\begin{itemize}
	\item The \emph{unit $\S$-module} is
	\[
	I = (0,\k,0,0,\ldots)\ .
	\]
	It is the unit for composite product of $\S$-modules.
	\item Associated to any chain complex $V$, there is a canonical $\S$-module $\End_V$ given by
	\[
	\End_V(n)\coloneqq \hom(V^{\otimes n},V)\ .
	\]
\end{itemize}

The category of $\S$-modules is made into a monoidal category by taking the composite of $\S$-modules as monoidal product and the unit $\S$-module $I$ as unit.

\begin{definition}
	An \emph{operad} $\P$ is a monoid in this monoidal category. More explicitly, it amounts to the data of an $\S$-module, denoted again by $\P$, together with a \emph{composition map}
	\[
	\gamma_\P:\P\circ\P\longrightarrow\P
	\]
	and a \emph{unit map}
	\[
	\eta_\P:I\to\P
	\]
	satisfying certain compatibility conditions.
\end{definition}

In this definition, we withheld an \emph{algebraic} or a \emph{dg} before the word operad. They come about when we decide to work with $\S$-modules over graded vector spaces or chain complexes respectively.

\medskip

For any chain complex $V$, the $\S$-module $\End_V$, equipped with the usual composition of functions, is a dg operad.

\subsection{Algebras over an operad}

\begin{definition}
	Let $\P$ be a dg operad. A structure of an \emph{algebra over $\P$} (or a \emph{$\P$-algebra}) on a chain complex $A$ is a morphism of dg operads
	\[
	\P\longrightarrow\End_A\ .
	\]
\end{definition}

Recall that the structure of a $\P$-algebra on a chain complex $A$ is equivalent to a linear morphism
\[
\rho_A:\P(A)\longrightarrow A\ ,
\]
called the \emph{composition} of $A$, making certain diagrams commute (see e.g. \cite[pp. 132--133]{vallette12}).

\begin{proposition}
	The \emph{free $\P$-algebra} generated by a chain complex $V$ is the $\P$-algebra
	\[
	\P(V)\coloneqq\bigoplus_{n\ge0}\P(n)\otimes_{\S_n}V^{\otimes n}.
	\]
\end{proposition}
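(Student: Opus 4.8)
The plan is to prove that the claimed object satisfies the universal property defining a free $\P$-algebra: namely that the assignment $V\mapsto\P(V)$ is left adjoint to the forgetful functor from $\P$-algebras to chain complexes. First I would fix the \emph{Schur functor} notation, writing $M(V)\coloneqq\bigoplus_{n\ge0}M(n)\otimes_{\S_n}V^{\otimes n}$ for any $\S$-module $M$, and noting that this is functorial in $V$ via $M(f)\coloneqq\bigoplus_{n\ge0}\id_{M(n)}\otimes f^{\otimes n}$.

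The conceptual heart of the argument is the natural isomorphism
\[
(M\circ N)(V)\;\cong\;M(N(V))\ ,
\]
obtained by unwinding the definition of the composite product $M\circ N$ and comparing the symmetric-group coinvariants on both sides. Specialising to $M=N=\P$ gives $(\P\circ\P)(V)\cong\P(\P(V))$, which I would use to transport the monoid structure of $\P$ onto $\P(V)$: the composition $\gamma_\P\colon\P\circ\P\to\P$ yields, after applying the Schur functor, a map
\[
\rho\colon\P(\P(V))\cong(\P\circ\P)(V)\xrightarrow{\gamma_\P(V)}\P(V)\ ,
\]
and by naturality of the above isomorphism the associativity and unit axioms for $\gamma_\P$ and $\eta_\P$ translate directly into the commuting diagrams making $(\P(V),\rho)$ a $\P$-algebra.

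Next I would construct the unit of the adjunction, the chain map $\iota_V\colon V=I(V)\xrightarrow{\eta_\P(V)}\P(V)$ landing in the arity-one summand $\P(1)\otimes_{\S_1}V$. For the universal property, given any $\P$-algebra $A$ with structure map $\rho_A\colon\P(A)\to A$ and any morphism of chain complexes $f\colon V\to A$, I would set
\[
\widetilde f\coloneqq\rho_A\circ\P(f)\colon\P(V)\xrightarrow{\P(f)}\P(A)\xrightarrow{\rho_A}A\ .
\]
Checking that $\widetilde f$ is a morphism of $\P$-algebras uses the associativity of the action $\rho_A$ together with functoriality of the Schur functor, while $\widetilde f\circ\iota_V=f$ is immediate from the unit axiom of $A$.

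Uniqueness --- and hence the freeness --- follows because any $\P$-algebra morphism extending $f$ is forced to equal $\widetilde f$ on the image of $\iota_V$, and therefore everywhere, since $\P(V)$ is generated by $\iota_V(V)$ under the action $\rho$. The main obstacle I anticipate is establishing the isomorphism $(\P\circ\P)(V)\cong\P(\P(V))$ with full care for the nested coinvariants and the shuffle decompositions recorded earlier in this section; once this naturality is in place, the transfer of the monoid axioms and the verification of the universal property are routine, the characteristic-zero identification of invariants with coinvariants streamlining the bookkeeping.
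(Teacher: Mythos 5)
Your proof is correct, but note that the paper itself gives no proof of this proposition: it is stated as a recollection of a standard fact, with the reader referred to the literature (\cite[Sect.~5.1--3]{vallette12}), and the argument implicitly relied upon there is exactly the monadic one you outline --- the Schur functor turns the composite product into composition of endofunctors, so the monoid $(\P,\gamma_\P,\eta_\P)$ yields a monad whose free algebras are precisely $\P(V)$, and the universal property follows by the extension $\widetilde f=\rho_A\circ\P(f)$ together with generation of $\P(V)$ by the arity-one part. One minor remark: the isomorphism $(M\circ N)(V)\cong M(N(V))$ and the verification of the universal property hold over any ground ring and never require identifying invariants with coinvariants, so your closing appeal to characteristic zero is superfluous rather than load-bearing.
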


Being free means satisfying the following universal property: any morphism from a free $\P$-algebra to another $\P$-algebra is completely determined by its values on the generators.

\subsection{Cooperads and coalgebras over a cooperad}

For details about cooperads, see \cite[Sect. 5.8]{vallette12}.

\medskip

Dual to the notion of an operad is the notion of a cooperad. Since we work over a field of characteristic $0$, we always identify invariants and coinvariants. We consider the ``completed'' composite of $S$-modules:
\[
(M\hat{\circ} N)(n)\coloneqq \prod_{k\ge0}M(k)\otimes_{\S_n} \left(\prod_{n_1+\cdots + n_k = n}N(n_1)\otimes\cdots\otimes N(n_k)\otimes_{\S_{n_1}\times\cdots\times\S_{n_k}}\k[\S_n]\right)\ .
\]

\begin{definition}
	A cooperad is an $\S$-module $\C$ together with a \emph{decomposition map} $\Delta_\C:\C\to\C\hat{\circ}\C$ and a \emph{counit map} $\epsilon_\C:\C\to I$ satisfying analogous commutative diagrams. A cooperad is said to be \emph{conilpotent} if the decomposition map splits through $\C\circ\C$.
\end{definition}

Dual to the notion of an algebra over an operad, there is the concept of a coalgebra over a cooperad.

\begin{definition}
	A $\C$-coalgebra $C$ is said to be \emph{conilpotent} if its decomposition map
	\[
	\Delta_C:C\longrightarrow\widehat{\C}(C)\coloneqq\C\hat{\circ} C
	\]
	splits through $\C(C)\coloneqq\C\circ C$.
\end{definition}

Under the assumption that the underlying $\S$-module is finite dimensional in every arity, the linear dual of an operad becomes a cooperad, while the linear dual of a cooperad is always an operad. Let $\P$ be such an operad. We will use the notation
\[
\Delta^{k,n_1,\ldots,n_k,\sigma}:\P^\vee\longrightarrow(\P\otimes_{k,n_1,\ldots,n_k}^\sigma\P)^\vee\cong\P^\vee\otimes_{k,n_1,\ldots,n_k}^\sigma\P^\vee
\]
for the dual of the restriction of the composition map of $\P$ to $\P\otimes_{k,n_1,\ldots,n_k}^\sigma\P$. If $n_1+n_2 = n+1$, $1\le j\le n_1$, and $\sigma\in\Sh(n_1-1,n_2)$, then we use the notation $\otimes_{n_1,n_2,j}^\sigma$ to mean $\otimes_{n_1,1,\ldots,1,n_2,1,\ldots,1}^{\tilde{\sigma}}$, where the $n_2$ is placed after $j-1$ ones, and where $\tilde{\sigma}$ acts with the second of the two blocks of $\sigma$ on the $n_2$ slots on the second level of the two-level tree, and with the other block on the other variables. Similarly, $\Delta^{n_1,n_2,j,\sigma}$ denotes the respective restriction of $\Delta$.

\subsection{The convolution operad and algebras over it}

If $\C$ is a cooperad and $\P$ is an operad, then the $\S$-action given by conjugation makes
\[
\hom(\C,\P)(n)\coloneqq\hom(\C(n),\P(n))
\]
into an $\S$-module. It is an operad --- the \emph{convolution operad} --- in a natural way as detailed in \cite[Sect. 6.4.1]{vallette12}. The composition map is defined as follows. If we have $\mu\in\hom(\C,\P)(k)$ and $\nu_i\in\hom(\C,\P)(n_i)$ for $1\le i\le k$, then $\gamma_{\hom(\C,\P)}(\mu\otimes(\nu_1\otimes\cdots\otimes\nu_k))$ is the element of $\hom(\C,\P)(n_1+\cdots+n_k)$ obtained by first applying $\Delta_\C$ to $\C(n_1+\cdots+n_k)$, then projecting on the subspace of $\C\circ\C$ with the underlying tree given by the $k$-corolla on the first level and the $n_1$-corolla, the $n_2$-corolla and so on on the second level, applying $\mu$ at the first level and $(\nu_1,\ldots,\nu_k)$ at the second level, and finally composing with $\gamma_\P$. There is a passage from invariant to coinvariants here: the decomposition $\Delta_\C$ lands in invariants, but the composition $\gamma_\P$ takes coinvariants as argument. This is \emph{not} done using the isomorphism described in the introduction, but by identifying the invariants with a subspace of the tensor product and simply taking the equivalence class.

\medskip

In Section \ref{sect:MCel}, we will be interested in certain algebras over this operad. Namely, let $D$ be a $\C$-coalgebra, and let $A$ be a $\P$-algebra. Let $\mu\in\hom(\C,\P)(n)$ and $\varphi_1,\ldots,\varphi_n\in\hom(D,A)$. We define
\[
\gamma_{\hom(D,A)}(\mu\otimes(\varphi_1\otimes\cdots\otimes\varphi_n))
\]
to be the composite
\[
D\xrightarrow{\Delta_D(n)}(\C(n)\otimes D^{\otimes n})^{\S_n}\xrightarrow{\mu\otimes(\varphi_1\otimes\cdots\otimes\varphi_n)}(\P(n)\otimes A^{\otimes n})^{\S_n}\longrightarrow\P(n)\otimes_{\S_n}A^{\otimes n}\xrightarrow{\gamma_A}A\ ,
\]
where once again the passage from invariants to coinvariants in \emph{not} done using the isomorphism of the introduction, but as described above. Here, $\Delta_D(n)$ is the composite of $\Delta_D$ followed by the projection onto $\C(n)\otimes D^{\otimes n}$.

\begin{proposition} \label{prop:convolutionAlg}
	 The map $\gamma_{\hom(D,A)}$ makes the chain complex $\hom(D,A)$ into a $\hom(\C,\P)$-algebra. 
\end{proposition}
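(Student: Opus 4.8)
The plan is to recognize that endowing $\hom(D,A)$ with a $\hom(\C,\P)$-algebra structure is equivalent to giving a morphism of dg operads $\Phi\colon\hom(\C,\P)\to\End_{\hom(D,A)}$, and that $\gamma_{\hom(D,A)}$ is exactly the candidate $\Phi(\mu)(\varphi_1,\ldots,\varphi_n)\coloneqq\gamma_{\hom(D,A)}(\mu\otimes(\varphi_1\otimes\cdots\otimes\varphi_n))$. Writing $\Delta_D(n)\colon D\to\C(n)\otimes D^{\otimes n}$ for the arity-$n$ component of the coalgebra structure and $\gamma_A(n)\colon\P(n)\otimes_{\S_n}A^{\otimes n}\to A$ for the algebra structure, the candidate reads $\Phi(\mu)(\varphi_1,\ldots,\varphi_n)=\gamma_A(n)\circ(\mu\otimes\varphi_1\otimes\cdots\otimes\varphi_n)\circ\Delta_D(n)$. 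I would then verify that $\Phi$ is well defined, $\S$-equivariant, unital, compatible with operadic composition, and a chain map.

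First I would check well-definedness: $\Phi(\mu)$ is manifestly multilinear in the $\varphi_i$, and the passage from the invariants $(\C(n)\otimes D^{\otimes n})^{\S_n}$ hit by $\Delta_D(n)$ to the coinvariants on which $\gamma_A(n)$ is defined is exactly the class map, so the composite descends to a well-defined element of $\hom(D,A)$. Equivariance of $\Phi$ follows by conjugation: since $\Delta_D(n)$ lands in invariants and $\gamma_A(n)$ is defined on coinvariants, permuting the inputs $\varphi_i$ can be absorbed into the conjugation action on $\mu$, which is precisely the $\S_n$-action on the convolution operad. For the unit, recall that the unit of $\hom(\C,\P)$ is $\eta_\P\epsilon_\C$; feeding it into $\Phi$, the counit axiom for the $\C$-coalgebra $D$ collapses $\Delta_D(1)$ and the unit axiom for the $\P$-algebra $A$ collapses $\gamma_A(1)$, leaving $\Phi(\eta_\P\epsilon_\C)(\varphi)=\varphi=\id_{\hom(D,A)}$.

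The main step, and the one I expect to be the principal obstacle, is compatibility with operadic composition. Given $\mu\in\hom(\C,\P)(k)$ and $\nu_i\in\hom(\C,\P)(n_i)$, I must show that $\Phi(\gamma_{\hom(\C,\P)}(\mu\otimes(\nu_1\otimes\cdots\otimes\nu_k)))$ agrees with the operadic composite $\Phi(\mu)\circ(\Phi(\nu_1)\otimes\cdots\otimes\Phi(\nu_k))$ in $\End_{\hom(D,A)}$. Unfolding the left-hand side produces a single $\Delta_D(n)$ followed by the cooperad decomposition $\Delta_\C$ built into $\gamma_{\hom(\C,\P)}$ on the $\C$-legs, together with $\gamma_\P$ followed by $\gamma_A(n)$ on the $\P$-legs; unfolding the right-hand side produces iterated $\Delta_D(k)$ and $\Delta_D(n_i)$ on the coalgebra side and iterated $\gamma_A(k)$ and $\gamma_A(n_i)$ on the algebra side. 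The coassociativity of the coalgebra coaction, $(\Delta_\C\hat\circ\id_D)\Delta_D=(\id_\C\hat\circ\Delta_D)\Delta_D$, identifies the two coalgebra sides, while the associativity axiom of the $\P$-algebra $A$ identifies the two algebra sides; combining them yields the equality. The delicate bookkeeping here is matching the specific tree-shaped summand of $\Delta_\C$ selected by $\gamma_{\hom(\C,\P)}$ with the corresponding iterate of $\Delta_D$, and tracking the Koszul signs produced when the middle permutation reshuffles the factors of $D^{\otimes n}$ and $A^{\otimes n}$ across the $\nu_i$; this is where the invariants/coinvariants identifications described above must be handled with care.

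Finally I would verify that $\Phi$ is a morphism of chain complexes, i.e. $\partial(\Phi(\mu))=\Phi(\partial\mu)$, where the differential on $\hom(\C,\P)$ is the usual one on an internal hom. This is a direct Leibniz-rule computation: applying $\partial$ to $\Phi(\mu)(\varphi_1,\ldots,\varphi_n)$ and using that $\Delta_D$ and $\gamma_A$ are chain maps distributes the differential across $\mu$, the $\varphi_i$, and the structure maps, and collecting terms reproduces $\Phi(\partial\mu)$ together with the internal-hom differential of $\End_{\hom(D,A)}$. Together, these four checks show that $\Phi$ is a morphism of dg operads, which is exactly the assertion that $\gamma_{\hom(D,A)}$ makes $\hom(D,A)$ into a $\hom(\C,\P)$-algebra.
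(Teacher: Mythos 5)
Your proposal is correct and takes essentially the same approach as the paper: the paper's own proof simply declares that checking the axioms for an algebra over an operad is straightforward and leaves the explicit computations to the reader, and your outline spells out exactly those checks (well-definedness through the invariants-to-coinvariants class map, $\S_n$-equivariance by conjugation, the unit axiom via $\eta_\P\epsilon_\C$, composition compatibility via coassociativity of $\Delta_D$ and associativity of $\gamma_A$, and compatibility with the differentials). There is no discrepancy between your argument and the paper's intended verification.
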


\begin{proof}
	It is straightforward to check the axioms for an algebra over an operad. We leave the explicit computations to the reader.
\end{proof}

\subsection{Free operads} \label{subsect:freeOperads}

\begin{definition}
	Let $M$ be an $\S$-module. A \emph{free operad} over $M$ is an operad $\T(M)$ together with a morphism of $\S$-modules $\eta(M):M\to\T(M)$ satisfying the following universal property: for any operad $\P$, every morphism $f:M\to\P$ of $\S$-modules extends uniquely to a morphism of operads $\tilde{f}:\T(M)\to\P$ such that $\tilde{f}\eta(M) = f$.
\end{definition}

As usual with this sort of universal properties, the free operad over an $\S$-module is unique up to isomorphism. It can be explicitly described as follows.

\begin{definition}
	Let $M$ be an $\S$-module. The \emph{tree module} $\T(M)$ over $M$ is the $\S$-module spanned in arity $n$ by rooted trees of arity $n$ with vertices of arity $k$ labeled by elements of $M(k)$, together with the obvious right $\S_n$-action.
\end{definition}

\begin{theorem}
	Let $M$ be an $\S$-module. The free operad over $M$ is the $\S$-module $\T(M)$ together with the grafting of trees as composition map.
\end{theorem}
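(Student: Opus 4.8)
The plan is to verify the two halves of the statement in turn: first that $(\T(M),\gamma)$, with $\gamma$ given by grafting of trees, genuinely forms an operad, and then that it satisfies the universal property defining the free operad.

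For the operad structure, the unit $\eta(M)\colon I\to\T(M)$ sends the generator of $I(1)=\k$ to the trivial tree (a single edge with no vertices), while the composition $\gamma$ takes a tree $t\in\T(M)(k)$ together with trees $s_1,\dots,s_k$ and grafts the root of each $s_i$ onto the $i$-th leaf of $t$. I would check the three operad axioms directly on trees: associativity holds because the tree produced by iterated grafting depends only on the final combinatorial configuration and not on the order in which the graftings are performed; unitality holds because grafting the trivial tree onto a leaf, or onto the root, leaves a tree unchanged; and $\S$-equivariance follows from the compatibility of the leaf-permutation action with grafting. These verifications are combinatorial and routine.

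For the universal property, fix an operad $\P$ and a morphism of $\S$-modules $f\colon M\to\P$. I would first note that $f$ induces a relabeling map $\T(f)\colon\T(M)\to\T(\P)$, and that $\P$ carries a canonical \emph{total composition} map $\Gamma_\P\colon\T(\P)\to\P$ contracting a $\P$-labeled tree to the single element of $\P$ obtained by composing its vertex labels according to the tree structure. I then set $\tilde f \coloneqq \Gamma_\P\circ\T(f)$. On a corolla labeled by $\mu\in M(k)$ this gives $f(\mu)$, so $\tilde f\,\eta(M)=f$; and because grafting in $\T(M)$ corresponds under $\Gamma_\P$ to operadic composition in $\P$, the map $\tilde f$ is a morphism of operads. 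Uniqueness follows by induction on the number of vertices: any operad morphism $g$ with $g\,\eta(M)=f$ is forced to agree with $f$ on corollas, and since every tree is obtained from corollas by grafting and $g$ commutes with composition, $g$ is determined on all of $\T(M)$, hence $g=\tilde f$.

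The main obstacle is the well-definedness of the total composition map $\Gamma_\P$, equivalently the independence of $\tilde f(t)$ from the order in which the vertices of $t$ are composed. I would define $\Gamma_\P$ by induction on the number of vertices --- decomposing $t$ at its root into a root corolla with subtrees grafted on, and applying $\gamma_\P$ to the recursively computed values --- and then prove that the result is unaffected by choosing a different vertex as the locus of decomposition. This is precisely where the associativity axiom of $\P$ enters: two different contraction orders are connected by applications of the operadic associativity relation, so an induction comparing any two decompositions to a common one yields equality. Once $\Gamma_\P$ is known to be well defined, it is immediate that it respects composition and the unit, and the remaining compatibilities for $\tilde f$ fall out formally.
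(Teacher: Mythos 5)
Your proof is correct, but note that the paper itself offers no proof of this statement: it appears in the recollection section (Subsection \ref{subsect:freeOperads}) as classical material, with the reader referred to the book of Loday--Vallette. So the comparison must be with the standard argument in that reference, and there your route is genuinely different. You verify the operad axioms for grafting directly and then build the extension $\tilde f = \Gamma_\P\circ\T(f)$, where $\Gamma_\P$ is the total composition contracting a $\P$-labeled tree --- exactly the map the paper denotes $\tilde{\gamma}_\P$ in its ``monadic definition of an operad'' --- and you correctly isolate the one real subtlety, namely the well-definedness of $\Gamma_\P$ independently of the order of contraction, resolving it by operadic associativity. The cited literature instead constructs $\T(M)$ as the colimit of truncated tree modules $\T_n(M) = I\oplus(M\circ\T_{n-1}(M))$ and proves the universal property by induction on this weight filtration; in that presentation the extension $\tilde f$ is defined recursively from the outset, so the order-of-contraction issue never arises as a separate lemma, at the price of having to identify the colimit with the combinatorial tree module afterwards. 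Your direct approach buys concreteness and stays entirely within the tree picture the paper actually uses; the inductive approach buys a cleaner formal handle on exactly the point you flagged as the main obstacle. Both are complete proofs, and your uniqueness argument (induction on the number of vertices, using that a morphism of operads is determined on corollas) is sound, with the trivial tree handled by unitality of operad morphisms.
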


In the dual picture, the \emph{cofree cooperad} over $M$ is the cooperad $\T^c(M)$ given again by the tree module module $\T(M)$ endowed with the decomposition of trees as decomposition map. It satisfies the dual universal property to the one for free operads.

\medskip

If $\P$ is an operad, by iterated application if the composition map we obtain a morphism of operads
\[
\T(\P)\longrightarrow\P\ ,
\]
which we denote by $\tilde{\gamma}_\P$. This yields the \emph{monadic} definition of an operad. Dually, if $\C$ is a cooperad, repeated applications of the decomposition map yield a map
\[
\C\longrightarrow\T^c(\C)\ ,
\]
which we denote by $\tilde{\Delta}_\C$.

\medskip

Given two $\S$-modules $M$ and $N$, there is a natural morphism of operads
\[
\Phi:\T(M\otimes N)\longrightarrow\T(M)\otimes\T(N)\ .
\]
It is induced by the map sending a tree with vertices indexed by elements of $M\otimes N$ to the tensor product of two copies of the same tree, the first one with the vertices labeled by the respective elements of $M$, and the second one with the vertices labeled with the elements of $N$, see \cite[pp. 308--309]{vallette12}. For $\tau$ a rooted tree, denote by $\T(M)^\tau$ the subspace of $\T(M)$ spanned by elements having $\tau$ as the underlying tree. Then we can define a map
\[
T^\tau:\T(M)^\tau\otimes\T(N)^\tau\longrightarrow\T(M\otimes N)^\tau
\]
which is inverse to the restriction of $\Phi$ to $\T(M\otimes N)^\tau$. We often refer to it as the \emph{switch map}.

\subsection{Operadic suspensions}

We use the letter $s$ to denote a formal element of degree $1$, and denote by $s^{-1}$ its dual. Therefore, if $V$ is a graded vector space, its suspension is given by $sV$, i.e. $(sV)_n \cong V_{n-1}$. Notice that the dual of $s\otimes s$ is then $-s^{-1}\otimes s^{-1}$, and that $s^{-1}s \cong 1 \cong -ss^{-1}$. We denote by $s^{-2}$ the element $s^{-1}\otimes s^{-1}$.

\medskip

Let $\P$ be an operad. Then the suspension $s\P$ is not an operad in general. However, there is an operadic version of suspension. Let $\susp\coloneqq\End_{\k s}$ be the operad which is $1$-dimensional in every arity, where it is spanned by the degree $1-n$ map $\susp_n$ sending $s^{\otimes n}$ to $s$. Similarly, we define $\susp^{-1}\coloneqq\End_{\k s^{-1}}$, and we denote by $\susp^c$ the dual cooperad of $\susp^{-1}$, and by $(\susp^{-1})^c$ the dual cooperad of $\susp$.

\begin{definition}
	Let $\P$ be an operad. The \emph{operadic suspension} of $\P$ is the operad $\susp\otimes\P$. Similarly, the \emph{operadic desuspension} of $\P$ is $\susp^{-1}\otimes\P$.
	
	Let $\C$ be a cooperad. Analogously to the above, the \emph{operadic suspension} of $\C$ is the cooperad $\susp^c\otimes\C$, and its \emph{operadic desuspension} is $(\susp^{-1})^c\otimes\C$.
\end{definition}

Notice that on the level of the underlying $\S$-modules, we have $\susp^c\cong\susp$ and similarly $\susp^{-1}\cong(\susp^{-1})^c$. However, there is a sign $(-1)^\epsilon$ due to the Koszul sign convention appearing in the isomorphism. The correct sign can be found by the following simple computation:
\begin{align*}
	(-1)^{\frac{n(n-1)}{2}} =&\ s^{-n}s^n\\
	=&\ (-1)^\epsilon\susp_n^{-1}\susp_ns^{-n}s^n\\
	=&\ (-1)^\epsilon\susp_n^{-1}s^{-n}\susp_ns^n\\
	=&\ (-1)^\epsilon s^{-1}s\\
	=&\ (-1)^\epsilon,
\end{align*}
where in the third line we used the fact that $(n-1)n$ is always even. Here --- and in the rest of this paper as well --- certain tensor products, maps and identifications are left implicit. For example, in various places we identified $1\in\k$ with the element of $V\otimes V^\vee$ corresponding to the identity of $V$, where $V$ is a finite dimensional vector space.

\subsection{Bar-cobar adjunction for operads} \label{subsect:barCobarOperads}

There is a pair of adjoint functors between conilpotent coaugmented dg cooperads and augmented dg operads, called the bar and the cobar construction, respectively. See \cite[Sect. 6.5.1--3]{vallette12} for details.

\begin{definition}
	The \emph{cobar construction} is the functor $\Omega$ taking a conilpotent coaugmented dg cooperad $\C$ and giving the quasi-free dg operad
	\[
	\Omega\C\coloneqq(\T(s^{-1}\overline{\C}),d\coloneqq d_1+d_2)\ ,
	\]
	where $d_1$ is the unique derivation extending the differential of $\C$, and $d_2$ is the unique derivation extending (a suspended version of) the infinitesimal decomposition map $\Delta_{(1)}$.
\end{definition}

\begin{definition}
	Dually, the \emph{bar construction} is the functor $\bar$ taking an augmented dg operad $\P$ and giving back the quasi-free dg cooperad
	\[
	\bar\P\coloneqq(\T^c(s\overline{\P}),d\coloneqq d_1+d_2)\ ,
	\]
	where $d_1$ is the unique coderivation extending the differential of $\P$, and $d_2$ is the unique coderivation extending (a suspended version of) the composition map of $\P$ on trees with two vertices.
\end{definition}

Let $\C$ be a dg cooperad and let $\P$ be a dg operad, then a \emph{twisting morphism} from $\C$ to $\P$ is a degree $-1$ morphism of $\S$-modules from $\C$ to $\P$ satisfying a certain version of the Maurer--Cartan equation. The set of all twisting morphisms is denoted by $\Tw(\C,\P)$, see \cite[Sect. 6.4]{vallette12}.

\begin{theorem}\label{thm:rosettaStone}
	Let $\C$ be a conilpotent, coaugmented cooperad, and let $\P$ be an augmented operad. There are natural isomorphisms
	\[
	\hom_{\mathsf{dg\ Op}}(\Omega\C,\P)\cong\Tw(\C,\P)\cong\hom_{\mathsf{dg\ coOp}}(\C,\bar\P)\ .
	\]
	In particular, the functors $\Omega$ and $\bar$ form an adjoint pair.
\end{theorem}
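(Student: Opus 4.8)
The plan is to exhibit both outer terms as being in natural bijection with the middle term $\Tw(\C,\P)$, and then to observe that composing the two bijections yields the claimed adjunction. Throughout, the essential mechanism is the same on both sides: the quasi-freeness of the (co)bar construction reduces a morphism of dg (co)operads to a single degree $-1$ map of $\S$-modules, and the requirement that this map commute with the internal differentials $d_1+d_2$ becomes precisely the Maurer--Cartan equation defining a twisting morphism. Recall that $\Tw(\C,\P)$ consists of the degree $-1$ $\S$-module maps $\alpha\colon\C\to\P$ (vanishing on the coaugmentation) satisfying the Maurer--Cartan equation $\partial(\alpha)+\alpha\star\alpha=0$ in the convolution pre-Lie algebra associated to $\hom(\C,\P)$.

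First I would establish the bijection $\hom_{\mathsf{dg\ Op}}(\Omega\C,\P)\cong\Tw(\C,\P)$. Since $\Omega\C=(\T(s^{-1}\overline{\C}),d_1+d_2)$ is quasi-free, the universal property of the free operad $\T(-)$ says that a morphism of graded operads $\tilde f\colon\Omega\C\to\P$ is uniquely determined by its restriction to the generators, i.e. by a morphism of $\S$-modules $s^{-1}\overline{\C}\to\P$; desuspending, this is the same datum as a degree $-1$ map $f\colon\C\to\P$ killing the coaugmentation. It then remains to check that $\tilde f$ is a \emph{dg} operad morphism, that is, commutes with differentials, exactly when $f$ is a twisting morphism. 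Because $\tilde f$ is multiplicative and both differentials are derivations, it suffices to verify the identity $\tilde f\circ(d_1+d_2)=d_\P\circ\tilde f$ on generators. The terms involving $d_1$ together with $d_\P\circ\tilde f$ contribute the linear part $\partial(f)$ (coming from the internal differentials of $\C$ and $\P$), while $\tilde f\circ d_2$, where $d_2$ extends the suspended infinitesimal decomposition $\Delta_{(1)}$, contributes exactly the quadratic convolution term $f\star f$; the two together give the Maurer--Cartan equation. Naturality in $\C$ and $\P$ is immediate from the construction.

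The second bijection $\Tw(\C,\P)\cong\hom_{\mathsf{dg\ coOp}}(\C,\bar\P)$ is obtained by the dual argument. Here $\bar\P=(\T^c(s\overline{\P}),d_1+d_2)$ is quasi-cofree, so by the dual universal property of the cofree cooperad a morphism of graded cooperads $\tilde g\colon\C\to\bar\P$ is uniquely determined by its corestriction onto the cogenerators $s\overline{\P}$, that is by a degree $-1$ map $g\colon\C\to\P$; the conilpotency hypothesis on $\C$ is what guarantees that such a corestriction integrates to a genuine, well-defined cooperad morphism, since the relevant sums over trees are then finite on each element. Compatibility of $\tilde g$ with the coderivations $d_1+d_2$ of $\bar\P$, checked after projecting onto cogenerators and onto the two-vertex part of the tree coalgebra, again unwinds to the equation $\partial(g)+g\star g=0$, so that $\tilde g$ is a morphism of dg cooperads precisely when $g\in\Tw(\C,\P)$.

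Composing the two natural bijections yields $\hom_{\mathsf{dg\ Op}}(\Omega\C,\P)\cong\hom_{\mathsf{dg\ coOp}}(\C,\bar\P)$, naturally in both variables, which is exactly the assertion that $\Omega$ is left adjoint to $\bar$. I expect the main obstacle to be the sign bookkeeping in the two verifications that differential-compatibility reduces to the Maurer--Cartan equation: one must correctly track the suspensions $s^{-1}$ and $s$ and the Koszul signs produced when $\tilde f$ (respectively $\tilde g$) is commuted past $d_2$ on trees with two vertices, in order to see that the quadratic term really is the convolution square $\alpha\star\alpha$ with the correct sign. The universal-property reductions and the naturality statements are formal; all the genuine content lies in making these two chain-map computations land on the same Maurer--Cartan equation.
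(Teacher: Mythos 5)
Your proof is correct and follows essentially the standard argument for this classical result, which the paper does not prove itself but recalls from Loday--Vallette \cite[Sect.~6.5]{vallette12}: quasi-freeness of $\Omega\C$ and quasi-cofreeness of $\bar\P$ reduce dg (co)operad morphisms to degree $-1$ maps $\C\to\P$ on (co)generators, and compatibility with the differentials $d_1+d_2$ unwinds on both sides to the Maurer--Cartan equation in the convolution pre-Lie algebra. You also correctly isolated the one genuinely delicate point, namely that $\T^c(s\overline{\P})$ is cofree only among \emph{conilpotent} cooperads, which is exactly where the conilpotency hypothesis on $\C$ is used.
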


\subsection{Binary quadratic operads, Koszul duality and homotopy algebras}

\begin{definition}
	An \emph{operadic quadratic data} is a couple $(E,R)$, where $E$ is an $\S$-module and $R$ is a sub-$\S$-module of $\T(E)^{(2)}$, the $\S$-module of trees with two vertices. If $(E,R)$ and $(F,S)$ are two operadic quadratic data, a morphism
	\[
	f:(E,R)\longrightarrow(F,S)
	\]
	of operadic quadratic data between them is a morphism of $\S$-modules $f:E\to F$ such that $\T(f)(R)\subseteq S$.
\end{definition}

To such a quadratic data, one can associate a \emph{quadratic operad} $\P(E,R)$ by
\[
\P(E,R)\coloneqq\T(E)/(R)\ .
\]
Such an operad is said to be \emph{binary} if the generating $\S$-module $E$ is concentrated in arity $2$. The category of binary quadratic operads is the subcategory of the category of operads having as objects the quadratic operads and as morphisms the morphisms of operads induced by morphisms of operadic quadratic data. Dually, we can also associate a \emph{quadratic cooperad} $\C(E,R)$ to any operadic quadratic data. It is defined through a universal property and does not admit a full description as simple as the one for $\P(E,R)$, see \cite[Sect. 7.1]{vallette12}.

\medskip

In the category of quadratic operads, we have the very useful tool of Koszul duality.

\begin{definition}
	Given a quadratic operad $\P\coloneqq\P(E,R)$, then its \emph{Koszul dual cooperad} is the quadratic cooperad
	\[
	\P^{\antishriek}\coloneqq\C(sE,s^2R)\ .
	\]
\end{definition}

\begin{definition}
	There is also the notion of the \emph{Koszul dual operad} of $\P$, which is defined as
	\[
	\P^{\shriek}\coloneqq\big(\susp^c\otimes\P^{\antishriek}\big)^\vee.
	\]
\end{definition}

The following computation will be useful later on. We have
\[
\P^{\antishriek} \cong (\susp^{-1})^c\otimes\big(\P^{\shriek}\big)^\vee,
\]
therefore, if we take $\P = \Q^{\shriek}$ for some quadratic operad $\Q$, then
\begin{equation}\label{eq:shriek-antishriek}
	\big(\Q^{\shriek}\big)^{\antishriek} = (\susp^{-1})^c\otimes\Q^\vee.
\end{equation}
Another useful fact is the following one.

\begin{lemma}\label{lemma:reprPshriek}
	Let $\P\coloneqq\P(E,R)$ be a binary quadratic operad. Then $\P^{\shriek}$ is again binary quadratic, and has the explicit presentation
	\[
	\P^{\shriek} = \P(s^{-1}\susp_2^{-1}E^\vee,R^\perp)\ .
	\]
\end{lemma}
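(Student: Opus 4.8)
The plan is to unwind the definition $\P^{\shriek} = (\susp^c\otimes\P^{\antishriek})^\vee$ step by step, tracking the generating $\S$-module and the space of relations through each operation, and to verify at the end that the resulting presentation is again binary quadratic. First I would recall that for $\P = \P(E,R)$ with $E$ concentrated in arity $2$, the Koszul dual cooperad is $\P^{\antishriek} = \C(sE, s^2R)$, a quadratic cooperad cogenerated in arity $2$ by $sE$ with corelations $s^2 R \subseteq \T(sE)^{(2)}$. I would then apply the operadic suspension $\susp^c\otimes(-)$ and the linear dual $(-)^\vee$ in turn. Both of these are operations on quadratic (co)operads that behave well with respect to the quadratic data: tensoring with the one-dimensional $\susp^c$ only twists degrees and signs without changing which trees appear, and linear dualizing a quadratic cooperad (cogenerated in arity $2$) yields a quadratic operad generated in arity $2$. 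Hence the composite is again binary quadratic, which establishes the first assertion.

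The core of the argument is then the bookkeeping on generators. I would compute the generating space of $\P^{\shriek}$ arity-wise in arity $2$. The cogenerators of $\P^{\antishriek}$ are $sE$; applying $\susp^c\otimes(-)$ multiplies by the arity-$2$ component $\susp^c_2$, and then dualizing turns $sE$ into $s^{-1}E^\vee$ and $\susp^c_2$ into $\susp_2^{-1}$ (using $(\susp^c)^\vee \cong \susp^{-1}$ on the underlying $\S$-module, with the Koszul sign computed earlier in the subsection on operadic suspensions). Collecting the formal symbols gives generating space $s^{-1}\susp_2^{-1}E^\vee$, exactly as claimed. The key identity I would lean on here is $\P^{\antishriek} \cong (\susp^{-1})^c\otimes(\P^{\shriek})^\vee$, stated just before the lemma, read in the direction that recovers $\P^{\shriek}$ from $\P^{\antishriek}$.

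For the relations, I would identify $R^\perp$ as the orthogonal complement of $R$ inside $\T(E)^{(2)}$ with respect to the natural pairing induced by the arity-$2$ duality $E^\vee\otimes E\to\k$, extended to the two-vertex tree module. The point is that the corelations of the quadratic cooperad $\C(sE,s^2R)$ are precisely $s^2R$, and under dualization the defining subspace of corelations passes to the \emph{annihilator} of $s^2R$ in $(\T(sE)^{(2)})^\vee\cong\T(s^{-1}\susp_2^{-1}E^\vee)^{(2)}$; up to the coherent suspension/desuspension identifications this annihilator is carried to $R^\perp\subseteq\T(s^{-1}\susp_2^{-1}E^\vee)^{(2)}$. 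Thus $\P^{\shriek} = \P(s^{-1}\susp_2^{-1}E^\vee, R^\perp)$.

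The main obstacle I anticipate is not the conceptual structure but the sign and suspension bookkeeping: one must carefully track the formal degree-$\pm1$ symbols $s,s^{-1}$ and the operadic suspension elements $\susp_2,\susp_2^{-1}$ through the cooperad duality, making sure that passing from cogenerators of a cooperad to generators of its dual operad introduces exactly the desuspension $s^{-1}\susp_2^{-1}$ and no stray sign, and that the pairing used to define $R^\perp$ is the one compatible with the Koszul sign rule. I would handle this by fixing the pairing on $\T(E)^{(2)}$ explicitly, reusing the sign computation $(-1)^{n(n-1)/2} = (-1)^\epsilon$ from the operadic suspension subsection in the arity-$2$ case, and checking the two-vertex trees directly, since the binary hypothesis reduces all of $\T(E)^{(2)}$ to arity-$3$ trees with two binary vertices where the pairing and the orthogonality condition can be written out concretely.
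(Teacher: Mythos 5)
The paper offers no proof of this lemma: it sits in the recollection section and is quoted from Loday--Vallette \cite[Sect.~7.6]{vallette12}, whose proof is exactly the argument you outline --- unwinding the definition $\P^{\shriek}=(\susp^c\otimes\P^{\antishriek})^\vee$, noting that Hadamard tensoring with the one-dimensional (co)operad $\susp^c$ preserves binary quadratic presentations, and dualizing the quadratic cooperad $\C(sE,s^2R)$ into the quadratic operad on the dual generators whose relations are the annihilator of the corelations. Your proposal is therefore correct and essentially coincides with the standard derivation; the one hypothesis you should state explicitly (implicit in the paper's statement, explicit in the source) is that $E$ is finite dimensional, since both the identification $(\T^c(sE)^{(2)})^\vee\cong\T(s^{-1}\susp_2^{-1}E^\vee)^{(2)}$ and the exchange of sub-cooperad corelations for quotient-operad relations via annihilators fail without it.
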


The category of dg operads admits a model structure, see for example \cite{hinich97homological}, and it can be shown that, for a dg operad $\P$ satisfying certain assumptions, the dg operad $\Omega\bar\P$ is a cofibrant resolution of $\P$. However, the dg operad $\Omega\bar\P$ is a very big object. Thus we are often interested to find smaller cofibrant resolutions. The most common one is the \emph{minimal model} for $\P$ (see \cite[Sect. 6.3.4]{vallette12}). A quadratic operad is said to be \emph{Koszul} if it satisfies certain homological conditions, see \cite[Sect. 7.4 and 8]{vallette12}.

\begin{theorem}[\cite{ginzburg94} and \cite{getzler94}]
	When $\P$ is a Koszul operad, the dg operad $\P_\infty\coloneqq\Omega\P^{\antishriek}$ is the minimal model for $\P$.
\end{theorem}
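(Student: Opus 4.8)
The plan is to verify the two defining features of a minimal model: that $\Omega\P^{\antishriek}$ is quasi-free with a decomposable (``minimal'') differential, and that the canonical augmentation $\Omega\P^{\antishriek}\to\P$ is a quasi-isomorphism. The first point is essentially formal and follows from the construction of the cobar functor, whereas the second is the genuine content and is exactly where the Koszulness hypothesis enters.

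For the first point, recall that by definition $\Omega\P^{\antishriek} = (\T(s^{-1}\overline{\P^{\antishriek}}), d_1 + d_2)$. Since the generating $\S$-module $E$ of $\P$ carries no internal differential, its Koszul dual cooperad $\P^{\antishriek} = \C(sE, s^2R)$ has trivial differential, so the term $d_1$ vanishes and the whole differential reduces to $d_2$. As $d_2$ is the unique derivation extending the infinitesimal decomposition map $\Delta_{(1)}$, it sends each generator into the weight-two part $\T(s^{-1}\overline{\P^{\antishriek}})^{(2)}$, i.e.\ into decomposables. Hence $\Omega\P^{\antishriek}$ is quasi-free on $s^{-1}\overline{\P^{\antishriek}}$ and its differential is minimal, as required.

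For the second point, I would use the Koszul twisting morphism $\kappa\in\Tw(\P^{\antishriek},\P)$ obtained as the composite $\P^{\antishriek}\twoheadrightarrow sE\xrightarrow{s^{-1}} E\hookrightarrow\P$. By Theorem \ref{thm:rosettaStone}, $\kappa$ corresponds to a morphism of dg operads $g_\kappa:\Omega\P^{\antishriek}\to\P$, which is precisely the augmentation in question. Both source and target are weight-graded and $g_\kappa$ preserves this grading, so it suffices to check that $g_\kappa$ is a quasi-isomorphism in each weight. By definition a quadratic operad is Koszul exactly when the associated Koszul complex $\P^{\antishriek}\circ_\kappa\P$ (equivalently $\P\circ_\kappa\P^{\antishriek}$) is acyclic, and a standard spectral-sequence comparison converts this acyclicity into the statement that $g_\kappa$ is an isomorphism on homology. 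Alternatively, one may factor $g_\kappa$ through the always-exact bar-cobar resolution $\Omega\bar\P\xrightarrow{\sim}\P$: Koszulness is equivalent to the inclusion $\P^{\antishriek}\hookrightarrow\bar\P$ being a quasi-isomorphism of conilpotent cooperads, and applying the cobar functor then yields the desired quasi-isomorphism.

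The main obstacle is entirely contained in the second point, namely extracting the quasi-isomorphism $g_\kappa$ from the acyclicity of the Koszul complex. This requires a careful analysis of the weight-graded bicomplex underlying $\Omega\P^{\antishriek}$ and control of the associated spectral sequence; the bookkeeping of signs coming from the operadic (de)suspension, and the passage between invariants and coinvariants in characteristic zero, are where the argument is most delicate. Everything else --- quasi-freeness, minimality of the differential, and the identification of the augmentation with $g_\kappa$ --- is formal, so I would cite \cite{vallette12} for the detailed homological comparison and concentrate the exposition on the structural statements.
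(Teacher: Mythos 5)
Your proposal is correct and follows the standard argument: the paper itself gives no proof of this statement (it is recalled from \cite{ginzburg94} and \cite{getzler94}, with the modern treatment in \cite[Sect.~7.4]{vallette12}), and your two-step outline --- minimality of $\Omega\P^{\antishriek}$ from the vanishing of $d_1$ plus decomposability of $d_2$, and the quasi-isomorphism $g_\kappa\colon\Omega\P^{\antishriek}\to\P$ extracted from Koszulness via the Koszul complex or via the inclusion $\P^{\antishriek}\hookrightarrow\bar\P$ --- is exactly that treatment. The only point to keep explicit is the one you defer to \cite{vallette12}: the cobar functor does not preserve arbitrary quasi-isomorphisms, only those between connected weight-graded conilpotent cooperads, which is precisely the setting here.
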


\begin{definition}
	Let $\P$ be a Koszul operad. A \emph{homotopy $\P$-algebra} is an algebra over the operad $\P_\infty$.
\end{definition}

\begin{remark}
	Many quadratic operads of interest are Koszul. Some examples are the so-called three graces $\lie$, $\com$, and $\ass$, which will be introduced in Section \ref{subsect:mainOperads}.

	Abusing notation, we denote by $\P_\infty$ the dg operad $\Omega\P^{\antishriek}$ even when $\P$ is not Koszul. The operad $\P_\infty$ is cofibrant, but the homotopy theory of $\P_\infty$-algebras is not the same as the homotopy theory of $\P$-algebras when $\P$ is not Koszul.
\end{remark}

A direct consequence of Theorem \ref{thm:rosettaStone} is that the structure of a $\P_\infty$-algebra on a chain complex $A$ is equivalent to the data of a twisting morphism
\[
\varphi_A\in\Tw(\P^{\antishriek},\End_A)\ .
\]

\subsection{Infinity-morphisms}

Let $\P$ be a quadratic operad, and let $A$ be a $\P_\infty$-algebra. One can endow the cofree coalgebra $\P^{\antishriek}(A)$ with the unique coderivation $d_{\P^{\antishriek}(A)}$ extending $d_1 + d_2$, where
\begin{align*}
	d_1 \coloneqq&\ \big(\P^{\antishriek}(A)\xrightarrow{\text{proj}}A\xrightarrow{d_A}A\big)\ ,\\
	d_2 \coloneqq&\ \big(\P^{\antishriek}(A)\xrightarrow{\Delta_{(1)}}(\P^{\antishriek}\circ_{(1)}\P^{\antishriek})(A)\xrightarrow{1\circ_{(1)}\gamma_A}\P^{\antishriek}(A)\xrightarrow{\text{proj}}A\big)\ .
\end{align*}
This is the (canonical) bar construction $\bar_\iota A$ for the $\P_\infty$-algebra $A$.

\begin{remark}
	This construction is the \emph{relative bar construction} of a $\P_\infty$-algebra with respect to the canonical twisting morphism
	\[
	\iota:\P^{\antishriek}\longrightarrow\Omega\P^{\antishriek}=\P_\infty\ ,
	\]
	see \cite[Sect. 6.5.4, 11.2]{vallette12} for more details.
\end{remark}

\begin{definition}
	An $\infty$-morphism of $\P_\infty$-algebras from $A$ to $A'$ is a morphism of differential graded $\P^{\antishriek}$-coalgebras
	\[
	\bar_\iota A\longrightarrow\bar_\iota A'\ .
	\]
	The composition of $\infty$-morphisms is the composition of morphisms of $\P^{\antishriek}$-coalgebras. We use the notation $A\rightsquigarrow A'$ to represent an $\infty$-morphism from $A$ to $A'$. The category of $\P_\infty$-algebras with $\infty$-morphisms is denoted by $\infty\text{-}\P_\infty\text{-}\mathsf{alg}$.
\end{definition}

The data of an $\infty$-morphism $g:A\rightsquigarrow A'$ is equivalent to a collection of maps
\[
g_n:\P^{\antishriek}(n)\otimes_{\S_n}A^{\otimes n}\longrightarrow A'
\]
for all $n\ge0$ satisfying certain relations.

\medskip

While this might seem a peculiar notion at first, it is an important generalization of strict morphisms of algebras. For example, $\infty$-morphisms of $\L_\infty$-algebras play a fundamental role in Kontsevich's proof of deformation quantization of Poisson manifolds in \cite{kontsevich03} and are generally an object of interest in deformation theory.

\subsection{The Homotopy Transfer Theorem}

Let $\P$ be a Koszul operad, and let $X$ be a $\P_\infty$-algebra. If there is a retraction
\begin{center}
	\begin{tikzpicture}
		\node (a) at (0,0){$X$};
		\node (b) at (2,0){$Y$};
		
		\draw[->] (a)++(.3,.1)--node[above]{\mbox{\tiny{$p$}}}+(1.4,0);
		\draw[<-,yshift=-1mm] (a)++(.3,-.1)--node[below]{\mbox{\tiny{$i$}}}+(1.4,0);
		\draw[->] (a) to [out=-150,in=150,looseness=4] node[left]{\mbox{\tiny{$h$}}} (a);
	\end{tikzpicture}
\end{center}
of chain complexes, then the Homotopy Transfer Theorem tells us the following.

\begin{theorem}[Homotopy Transfer Theorem]
	The chain complex $Y$ inherits a $\P_\infty$-algebra structure from $X$ such that $X$ and $Y$ are homotopy equivalent as $\P_\infty$-algebras. Explicitly, the maps $i$ and $p$ can be extended to $\infty$-quasi-isomorphisms $i_\infty$ and $p_\infty$ of $\P_\infty$-algebras between them.
\end{theorem}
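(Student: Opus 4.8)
The plan is to describe the transferred structure on $Y$ by an explicit combinatorial formula and then to verify that it is coherent and that the retraction data upgrade to $\infty$-morphisms. By the discussion following Theorem \ref{thm:rosettaStone}, the $\P_\infty$-algebra structure on $X$ is the datum of a twisting morphism $\varphi_X\in\Tw(\P^{\antishriek},\End_X)$, and producing a $\P_\infty$-structure on $Y$ amounts to producing a twisting morphism $\varphi_Y\in\Tw(\P^{\antishriek},\End_Y)$. I would build $\varphi_Y$ as the composite
\[
\varphi_Y:\ \P^{\antishriek}\xrightarrow{\ \tilde\Delta_{\P^{\antishriek}}\ }\T^c(\P^{\antishriek})\xrightarrow{\ \T(\varphi_X)\ }\T(\End_X)\xrightarrow{\ \Xi\ }\End_Y\ ,
\]
where $\tilde\Delta_{\P^{\antishriek}}$ is the iterated decomposition of Section \ref{subsect:freeOperads}, the middle map applies $\varphi_X$ at every vertex (using that $\T^c$ and $\T$ share the same underlying $\S$-module), and $\Xi$ evaluates a tree of operations on $X$ as a single operation on $Y$ by inserting the inclusion $i$ at each leaf, the homotopy $-h$ along each internal edge, and the projection $p$ at the root. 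Concretely this is the usual ``sum over trees'' formula, with one summand for each tree occurring in $\tilde\Delta_{\P^{\antishriek}}$.

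The heart of the matter is to check that $\varphi_Y$ satisfies the Maurer--Cartan equation defining a twisting morphism, and I expect this to be the main obstacle: it requires tracking Koszul signs together with the side conditions $h^2=0$, $hi=0$, $ph=0$ that one may always arrange for the homotopy $h$. Rather than a bare-hands computation, I would argue via the Homological Perturbation Lemma. The arity-one parts of $i$, $p$, $h$ extend functorially to a contraction of the cofree coalgebras $\P^{\antishriek}(X)\rightleftarrows\P^{\antishriek}(Y)$ equipped with the differentials induced by $d_X$ and $d_Y$. The coderivation encoding $\varphi_X$ on the bar construction $\bar_\iota X$ differs from this induced differential by a perturbation that is locally nilpotent for the weight grading; applying the Perturbation Lemma transfers it to a square-zero differential on $\P^{\antishriek}(Y)$, and it is readily seen to reproduce the tree formula for $\varphi_Y$ above. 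The delicate point to verify is that this transferred differential is again a \emph{coderivation} — so that, by cofreeness of $\P^{\antishriek}(-)$, it corresponds to a genuine $\P_\infty$-structure, i.e. to a twisting morphism $\varphi_Y$; this follows from the fact that the perturbation itself is a coderivation and that the contraction is compatible with the cofree coalgebra structure, again using the side conditions.

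Finally I would exhibit the $\infty$-(quasi-)isomorphisms. The perturbed inclusion and projection produced by the Perturbation Lemma are, by construction, morphisms of differential graded $\P^{\antishriek}$-coalgebras $\bar_\iota Y\to\bar_\iota X$ and $\bar_\iota X\to\bar_\iota Y$, hence $\infty$-morphisms $i_\infty\colon Y\rightsquigarrow X$ and $p_\infty\colon X\rightsquigarrow Y$; they are given by analogous sum-over-trees formulas whose arity-one components are precisely $i$ and $p$. Since an $\infty$-morphism is a quasi-isomorphism exactly when its arity-one component is, and $i$, $p$ are quasi-isomorphisms of the underlying chain complexes, both $i_\infty$ and $p_\infty$ are $\infty$-quasi-isomorphisms. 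As $\infty$-quasi-isomorphisms are invertible up to homotopy, this exhibits $X$ and $Y$ as homotopy equivalent $\P_\infty$-algebras, completing the argument.
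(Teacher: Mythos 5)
First, a point of comparison: the paper does not actually prove this statement; it quotes it from the book \cite[Sect.~10.3]{vallette12} and records only the explicit formula for the transferred structure via the Van der Laan map $\vdw_Y$. Your sum-over-trees formula for $\varphi_Y$ is, up to bookkeeping with suspensions (the decomposition lands in $\T^c\big(\overline{\P}^{\antishriek}\big)$ and one applies $\T^c(s\varphi_X)$ before stripping the $s$'s), exactly that formula, so your construction agrees with the one the paper uses. Your verification strategy via the Homological Perturbation Lemma is a different route from the one in loc.\ cit., and it is in principle viable --- but as written it has a genuine gap.

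The gap is precisely the step you flag as delicate, and the justification you offer for it is not an argument. In the perturbation-lemma approach, the perturbed differential on $\P^{\antishriek}(Y)$ is $d_1 + \sum_{n\ge0}\P^{\antishriek}(p)\,d_2\,(H d_2)^n\,\P^{\antishriek}(i)$, where $H$ is the extended homotopy. Two concrete problems arise. First, $H$ does not exist ``functorially'': the functor $\P^{\antishriek}(-)$ is functorial for chain maps but not for homotopies, so $H$ must be constructed by hand (the so-called tensor trick: a suitably symmetrized insertion of $h$ in one slot and of $ip$ or the identity in the others), and the side conditions $H^2=0$, $H\,\P^{\antishriek}(i)=0$, $\P^{\antishriek}(p)\,H=0$ must be re-verified for that specific choice. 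Second, and decisively, $H$ is \emph{not} a coderivation, and coderivations are not closed under composition; hence no individual term $\P^{\antishriek}(p)\,d_2(Hd_2)^n\,\P^{\antishriek}(i)$ is a coderivation, and the facts that ``$d_2$ is a coderivation'' and ``the contraction is compatible with the coalgebra structure'' give no formal reason for the total sum to be one. Whether it is one depends on the particular $H$ chosen, and establishing this is exactly the content of the coalgebra perturbation lemma (Gugenheim--Lambe--Stasheff for the tensor coalgebra, Huebschmann, and Berglund for cofree coalgebras over general cooperads), which is itself a nontrivial theorem proved by induction over the weight (coradical) filtration. So your proposal asserts, rather than proves, the key point; to close the gap you must either invoke or reprove that coalgebra perturbation lemma, or else verify directly that the tree formula for $\varphi_Y$ satisfies the twisting-morphism (Maurer--Cartan) equation in $\Tw(\P^{\antishriek},\End_Y)$, which is what the proof in \cite[Sect.~10.3]{vallette12} does by operadic bar-construction methods. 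Your final paragraph (arity-one components, $\infty$-quasi-isomorphisms, homotopy invertibility) is fine once this central step is secured.
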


This deep theorem can be found in the book \cite[Sect. 10.3]{vallette12}, where its long history and many facets are also elucidated. An explicit formula for the transferred structure is also given as follows. Let
\[
\varphi_X\in\Tw(\P^{\antishriek},\End_X)
\]
be a $\P_\infty$-algebra structure on $X$. Then a $\P_\infty$-algebra structure on $Y$ is given by the twisting morphism $\varphi_Y\in\Tw(\P^{\antishriek},\End_Y)$ defined as the composite
\[
\P^{\antishriek}\xrightarrow{\Delta_{\P^{\antishriek}}}\T^c\left(\overline{\P}^{\antishriek}\right)\xrightarrow{\T^c(s\varphi_X)}\T^c(s\End_X)\xrightarrow{\vdw_Y}\End_Y\ ,
\]
where the map $\vdw_Y$, called the \emph{Van der Laan map}, is given by sending a tree with the vertices marked by elements of $s\End_X$ to the same tree with the suspension $s$ removed, and with $i$'s on the leaves, $p$ at the root, and $h$ applied to every inner edge, and then taking the obvious composition in $\End_Y$. This explicit expression for the transferred structure first appeared in \cite{GCTV12}. A detailed exposition of this map is given in \cite[Sect. 10.3.2]{vallette12}. Explicit formul{\ae} for $i_\infty$ and $p_\infty$ are given in \cite[Sect. 10.3.5--6]{vallette12}.

\subsection{Operads of main interest} \label{subsect:mainOperads}

The main symmetric dg operads appearing in this article are $\lie$, $\com$, and $\ass$, coding Lie, commutative, and associative algebras respectively. They will be described in more detail in Section \ref{subsect:notationsExamples}. Their Koszul resolutions are the dg operads $\L_\infty$, $\C_\infty$, and $\ass_\infty$ respectively. In the ns setting, we consider principally $\as$, also coding associative algebras. Its Koszul resolution is denoted by $\A_\infty$.

\medskip

As already mentioned in the introduction, the operad of main interest to us is the quasi-free dg operad $\L_\infty$ encoding homotopy Lie algebras (also known as strong homotopy Lie algebras in the literature). It is given by
\[
\L_\infty\coloneqq \Omega\lie^{\antishriek}\ .
\]
It is a well known fact that $\lie^{\antishriek} \cong (\susp^{-1})^c\otimes\com^\vee$. For each $n\ge2$, we denote by $\mu_n$ the generating operation of $\com(n)$, corresponding to the only way to multiply $n$ elements in a commutative algebra. We use the notation
\[
\ell_n\coloneqq s^{-1}\susp_n^{-1}\mu_n^\vee\ ,\qquad n\ge2\ ,
\]
for the generators of the operad $\L_\infty$.

\medskip

Similarly, in the non-symmetric (ns) case we will be interested in the quasi-free dg ns operad $\A_\infty$ coding homotopy associative algebras. It is given by
\[
\A_\infty\coloneqq\Omega\as^{\antishriek}\ .
\]
Analogously to what happens in the symmetric case, we have $\as^{\antishriek} \cong (\susp^{-1})^c\otimes\as^\vee$. For each $n\ge2$, we denote by $m_n$ the generating operation in $\as(n)$, which corresponds to the unique way to multiply $n$ elements in an associative algebra (without changing their order). We use the notation
\[
a_n\coloneqq s^{-1}\susp^{-1}_nm_n^\vee\ ,\qquad n\ge2\ ,
\]
for the generators of $\A_\infty$.

\section{Natural homotopy algebra structures on mapping spaces and tensor products} \label{sect:sect2}

In this section, we state and prove the theorem which is the starting point for all other results in this article. It says that if we are given an algebra over an operad and a coalgebra over a cooperad, and if the operad and the cooperad are related in a certain way by a morphism of dg operads, then we can put a natural homotopy Lie algebra on the space of linear morphisms from the coalgebra to the algebra. Dually, if we are given two algebras over dg operads that are related by a morphism of dg operads, then we can put a natural $\L_\infty$-algebra structure on their tensor product.

\subsection{Natural $\L_\infty$-algebra structures}

Let $\P$ and $\Q$ be dg operads, let $\Q$ be augmented, and suppose we have a morphism of dg operads
\[
\Psi:\Q\longrightarrow\P\ .
\]
Denote by $\Psi(n):\Q(n)\to\P(n)$ the restriction of $\Psi$ to arity $n$. Then we can associate to $\Psi$ a map
\[
s^{-1}\lie^{\antishriek}\longrightarrow\hom(\bar(\susp\otimes\Q),\P)
\]
by sending $\ell_n$ to the element
\[
s^{-1}\susp_n^{-1}\Psi(n)\in\hom_\k(s\susp_n\Q(n),\P(n))
\]
given by
\[
(s^{-1}\susp_n^{-1}\Psi(n))(s\susp_nq) = (-1)^{n-1 + \frac{n(n-1)}{2}}\Psi(q)
\]
and then precomposing with the projection
\[
\mathrm{proj}^{(1)}:\bar(\susp\otimes\Q)(n)\longrightarrow s\susp_n\Q(n)
\]
onto the weight $1$ part to get an element of $\hom(\bar(\susp\otimes\Q),\P)(n)$. We denote by
\[
\genmaninDual_\Psi:\L_\infty\longrightarrow\hom(\bar(\susp\otimes\Q),\P)
\]
the unique morphism of algebraic operads extending the map given above. In can be described explicitly as follows. Let $\ell$ be an element of $\L_\infty$ with underlying rooted tree $\tau$, then $\genmaninDual_\Psi$ sends $\ell$ to the morphism from $\bar(\susp\otimes\Q)$ to $\P$ given by first projecting to the submodule $\T(s\susp\otimes\overline{\Q})^\tau$ spanned by elements having $\tau$ as underlying tree, then applying $\genmaninDual_\Psi(\ell_n)$ to the vertices of arity $n$ (with the correct signs appearing because of the Koszul sign rule), and finally composing the resulting tree in $\P$.

\medskip

Without further ado, we can now state the main theorem of this section.

\begin{theorem} \label{thm:mainThmDual}
	Let $\Psi:\Q\to\P$ be a morphism of dg operads. Then the morphism of algebraic operads
	\[
	\genmaninDual_\Psi:\L_\infty\longrightarrow\hom(\bar(\susp\otimes\Q),\P)
	\]
	described above commutes with the differentials, i.e. it is a morphism of dg operads. Moreover, it is compatible with compositions in the following sense. If we have a second morphism $\Theta:\R\to\Q$ of dg operads, then we have the following commutative diagram
	\begin{center}
		\begin{tikzpicture}
			\node (a) at (0,0){$\L_\infty$};
			\node (b) at (4,2){$\hom(\bar(\susp\otimes\R),\Q)$};
			\node (c) at (4,0){$\hom(\bar(\susp\otimes\R),\P)$};
			\node (d) at (4,-2){$\hom(\bar(\susp\otimes\Q),\P)$};
			
			\draw[->] (a)--node[above]{$\genmaninDual_\Theta$}(b);
			\draw[->] (a)--node[above]{$\genmaninDual_{\Psi\Theta}$}(c);
			\draw[->] (a)--node[below]{$\genmaninDual_\Psi$}(d);
			
			\draw[->] (b)--node[right]{$\Psi_*$}(c);
			\draw[->] (d)--node[right]{$\bar(\susp\otimes\Theta)^*$}(c);
		\end{tikzpicture}
	\end{center}
	in the category of dg operads, that is:
	\[
	\genmaninDual_\Psi(\ell) = \Psi\genmaninDual_\Theta(\ell) = \genmaninDual_\Psi(\ell)\bar(\susp\otimes\Theta)
	\]
	for any $\ell\in\L_\infty$.
\end{theorem}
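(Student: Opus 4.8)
The plan is to exploit the quasi-freeness of $\L_\infty = \Omega\lie^{\antishriek} = \T(s^{-1}\overline{\lie^{\antishriek}})$. Since $\genmaninDual_\Psi$ is by construction the unique morphism of graded operads extending the assignment on the generators $\ell_n$, both composites $\genmaninDual_\Psi \circ d_{\L_\infty}$ and $\partial \circ \genmaninDual_\Psi$ are $\genmaninDual_\Psi$-derivations of degree $-1$, where $\partial$ denotes the differential of the convolution operad $\hom(\bar(\susp\otimes\Q),\P)$: the first because $d_{\L_\infty}$ is a derivation and $\genmaninDual_\Psi$ a morphism of operads, the second because $\partial$ is a derivation of the convolution operad. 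As two $\genmaninDual_\Psi$-derivations coincide as soon as they agree on generators, it suffices to verify
\[
\genmaninDual_\Psi(d_{\L_\infty}\ell_n) = \partial(\genmaninDual_\Psi \ell_n)
\]
for every $n\ge 2$.

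First I would compute both sides separately. On the source, since $\lie^{\antishriek}$ carries no internal differential, the cobar differential reduces to its quadratic part, so $d_{\L_\infty}\ell_n$ is the familiar sum of two-vertex trees $\sum \pm\, \ell_a\circ_i\ell_b$ with $a+b=n+1$ dual to the infinitesimal decomposition of $\lie^{\antishriek}$; applying the operad morphism $\genmaninDual_\Psi$ turns these into the corresponding partial compositions in the convolution operad. On the target I would use $\partial(\phi) = d_\P\phi - (-1)^{|\phi|}\phi\, d_{\bar(\susp\otimes\Q)}$ with $\phi = \genmaninDual_\Psi\ell_n$. The crucial observation is that $\phi$ is supported on the weight-$1$ part $s\susp_n\Q(n)$, so only two weights of the input survive: since the bar differential $d_{\bar(\susp\otimes\Q)} = d_1 + d_2$ has $d_1$ preserving weight and $d_2$ lowering it by one, the term $d_\P\phi - (-1)^{|\phi|}\phi\, d_1$ feeds on weight $1$ while $-(-1)^{|\phi|}\phi\, d_2$ feeds on weight $2$.

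The weight-$1$ contribution vanishes: there $d_1$ is a suspension of the differential of $\Q$, and $d_\P\Psi = \Psi d_\Q$ because $\Psi$ is a morphism of dg operads, so the two terms cancel once the suspension signs are accounted for. This is precisely where the dg hypothesis on $\Psi$ enters. The weight-$2$ contribution $-(-1)^{|\phi|}\phi\, d_2$ is where the homotopy Lie relations are encoded: $d_2$ extends the composition of $\susp\otimes\Q$ on two-vertex trees, and composing with $\phi$ and $\gamma_\P$ reproduces exactly the partial compositions obtained from $\genmaninDual_\Psi(d_{\L_\infty}\ell_n)$, once one matches the decomposition of $\mu_n^\vee$ against the cocomposition of the $n$-corolla. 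I expect the sign bookkeeping here to be the main obstacle: one must reconcile the sign $(-1)^{n-1+\frac{n(n-1)}{2}}$ built into the definition of $\genmaninDual_\Psi(\ell_n)$, the operadic suspension sign $(-1)^{\frac{n(n-1)}{2}}$ computed earlier, the signs in the cobar differential of $\L_\infty$, and those in the convolution product and the bar differential. The carefully chosen normalization of $\genmaninDual_\Psi(\ell_n)$ is designed to make these cancel, and verifying this via the Koszul sign rule is the technical heart of the argument.

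Finally, for the compatibility with compositions I would again reduce to the generators, since the three composites $\L_\infty \to \hom(\bar(\susp\otimes\R),\P)$ are morphisms of operads and hence determined by their restriction to the $\ell_n$; here $\Psi_*$ is a morphism of convolution operads because $\Psi$ is a morphism of operads, and $\bar(\susp\otimes\Theta)^*$ because $\bar(\susp\otimes\Theta)$ is a morphism of cooperads. On the generator $\ell_n$ all three routes yield the map that projects onto weight $1$ and applies $s^{-1}\susp_n^{-1}(\Psi\Theta)(n)$: the route through $\genmaninDual_\Theta$ followed by $\Psi_*$ post-composes $\Theta(n)$ with $\Psi$; the route $\genmaninDual_{\Psi\Theta}$ applies $(\Psi\Theta)(n)$ directly; and the route through $\genmaninDual_\Psi$ followed by $\bar(\susp\otimes\Theta)^*$ pre-composes with $s\susp_n\Theta(n)$ on the weight-$1$ part, so that $s^{-1}\susp_n^{-1}\Psi(n)\circ s\susp_n\Theta(n) = s^{-1}\susp_n^{-1}(\Psi\Theta)(n)$. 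Since the three agree on generators, the diagram commutes. This part is essentially formal and requires no delicate sign analysis beyond the elementary identity $\susp_n^{-1}\susp_n = \id$.
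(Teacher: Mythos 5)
Your proposal follows essentially the same route as the paper's proof: reduce the claim to the generators $\ell_n$ (which you justify via the observation that both $\genmaninDual_\Psi\circ d_{\L_\infty}$ and $\partial\circ\genmaninDual_\Psi$ are derivations along $\genmaninDual_\Psi$, something the paper merely asserts), split the convolution differential by weight so that the weight-$1$ contribution cancels because $d_\P\Psi=\Psi d_\Q$ and the weight-$2$ contribution $-(-1)^{|\phi|}\phi\,d_2$ is matched against $\genmaninDual_\Psi(d_{\L_\infty}\ell_n)$, and finally verify composition compatibility on generators. The paper's proof differs only in that it carries out the Koszul sign bookkeeping explicitly (which you correctly identify as the technical heart but defer), so your outline is correct and structurally identical.
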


\begin{remark}
	The idea of this construction was already present in Ginzburg--Kapranov \cite[Prop. 3.2.18]{ginzburg94} and elaborated a bit in \cite[Appendix C]{brown15}.
\end{remark}

\begin{remark}
	Thanks to the compatibility with the compositions, it is often only necessary to compute $\genmaninDual_\Q\coloneqq\genmaninDual_{\id_\Q}$ or $\genmaninDual_\P$ in order to find $\genmaninDual_\Psi$ for $\Psi:\Q\to\P$. Indeed, one can write $\Psi = \id_\P\Psi$ or $\Psi = \Psi\id_\Q$ and then use the relations given above.
\end{remark}

\begin{remark}
	A slightly more general construction can be made in an analogous way, as remarked independently in \cite{wierstra16}. Where our results construct a morphism from the operad $\L_\infty$ to a certain convolution operad starting from a morphism of dg operads, in loc. cit. the same result is obtained starting from a twisting morphism. One passes from the former to the latter by pulling back by the canonical twisting morphism $\pi:\bar\Q\to\Q$. More precisely, let $\C$ be a dg cooperad and let $\P$ be a dg operad. Then by definition
	\[
	\Tw(\C,\P) \coloneqq \MC\left(\hom(\C,\P)\right)\ ,
	\]
	where $\hom(\C,\P)$ here denotes the pre-Lie algebra associated to the convolution algebra. As dg operads, we have
	\begin{align*}
		\hom(\C,\P)&\ \cong\hom(\com^\vee,\hom(\C,\P))\\
		&\ \cong\hom((\susp^{-1})^c\otimes\com^\vee,(\susp^{-1})^c\otimes\hom(\C,\P))\\
		&\ \cong\hom((\susp^{-1})^c\otimes\com^\vee,\hom(\susp^c\otimes\C,\P))\ ,
	\end{align*}
	where all the isomorphisms are canonical. Therefore,
	\begin{align*}
		\Tw(\C,\P)\cong&\ \MC\left(\hom((\susp^{-1})^c\otimes\com^\vee,\hom(\susp^c\otimes\C,\P))\right)\\
		=&\ \Tw\left((\susp^{-1})^c\otimes\com^\vee,\hom(\susp^c\otimes\C,\P)\right)\\
		\cong&\ \hom_{\mathsf{dg\ Op}}(\L_\infty,\hom(\susp^c\otimes\C,\P))\ ,
	\end{align*}
	where in the last line we used Theorem \ref{thm:rosettaStone}. In our situation, we can then consider $\C=\bar\Q$ and the twisting morphism
	\[
	\psi = \big(\bar\Q\stackrel{\pi}{\longrightarrow}\Q\stackrel{\Psi}{\longrightarrow}\P\big)\ ,
	\]
	which gives back Theorem \ref{thm:mainThmDual} thanks to the canonical isomorphism $\susp^c\otimes\bar\Q\cong\bar(\susp\otimes\Q)$. The explicit formul{\ae} for the morphisms of operads associated to an arbitrary twisting morphism are analogous to the ones presented above.
\end{remark}

\begin{proof}
	To show that $\genmaninDual_\Psi$ commutes with the differentials, it is enough to check on the generators $\ell_n$ of the operad $\L_\infty$. The differential of the range of $\genmaninDual_\Psi$ is given by
	\[
	d_{\hom(\bar(\susp\otimes\Q),\P)} = (d_\P)_* - (d_1+d_2)^*,
	\]
	with $d_1$ and $d_2$ as described in Subsection \ref{subsect:barCobarOperads}. Notice that $d_\P\genmaninDual_\Psi(\ell_n)$ and $\genmaninDual_\Psi(\ell_n)d_1$ can be non-zero only on elements of weight $1$ of $\bar(\susp\otimes\Q)$, while $\genmaninDual_\Psi(\ell_n)d_2$ vanishes everywhere except on the weight $2$ part $\bar(\susp\otimes\Q)\circ_{(1)}\bar(\susp\otimes\Q)$, as does $\genmaninDual_\Psi(d_{\L_\infty}\ell_n)$. Let $s\susp_nq\in\bar(\susp\otimes\Q)^{(1)}$, then we compute
	\begin{align*}
		\big(d_\P\genmaninDual_\Psi(\ell_n) -&\ (-1)^n\genmaninDual_\Psi(\ell_n)d_1\big)(s\susp_nq) =\\
		=&\ d_\P\left((-1)^{n-1+\frac{n(n-1)}{2}}\Psi(q)\right) - (-1)^n\genmaninDual(\ell_n)\big((-1)^ns\susp_nd_\Q q\big)\\
		=&\ (-1)^{n-1+\frac{n(n-1)}{2}}\big(d_\P\Psi(q) - \Psi(d_\Q q)\big)\\
		=&\ 0\ .
	\end{align*}
	Here we used the fact that $\Psi$ is a morphism of dg operads, and thus commutes with the differentials $d_\P$ and $d_\Q$. To complete this part of the proof, we only have to check what happens on elements of weight $2$. So we consider
	\[
	\left(s\susp_{n_1}q_1\otimes_js\susp_{n_2}q_2\right)^\sigma\in\bar(\susp\otimes\Q)\otimes_{n_1,n_2,j}^\sigma\bar(\susp\otimes\Q)\ ,
	\]
	where $n_1+n_2 = n+1$ and $\sigma\in\Sh(n_1-1,n_2)$. Then we have
	\begin{align*}
		-(-1)^n\genmaninDual_\Psi(\ell_n)d_2&\left(s\susp_{n_1}q_1\otimes_js\susp_{n_2}q_2\right)^\sigma =\\
		=&\ (-1)^{n+1}\genmaninDual_\Psi(\ell_n)\left((-1)^{n_2|q_1|+n_1-1+(j-1)(n_2-1)+\sigma}s\susp_n\gamma_\Q(q_1\otimes_jq_2)^\sigma\right)\\
		=&\ (-1)^{n_2|q_1|+n_1-1+(j-1)(n_2-1)+\sigma + \frac{n(n-1)}{2}}\Psi(\gamma_\Q(q_1\otimes_jq_2)^\sigma)\ .
	\end{align*}
	The signs in the second line appear because of switches, the composition in $\susp$, and because $\susp_n$ carries the sign representation of $\S_n$. At the same time,
	\begin{align*}
		\genmaninDual_\Psi(d_{\L_\infty}&\ell_n)\left(s\susp_{n_1}q_1\otimes_js\susp_{n_2}q_2\right)^\sigma =\\
		=&\ \genmaninDual_\Psi\left(\sum_{\substack{\tilde{n}_1+\tilde{n}_2 = n-1\\\tilde{\sigma + 1}\in\Sh(\tilde{n}_1-1,\tilde{n}_2)}}(-1)^{\tilde{\sigma} + \tilde{n}_1}(\ell_{\tilde{n}_1}\otimes_1\ell_{\tilde{n}_2})^{\tilde{\sigma}}\right)\left(s\susp_{n_1}q_1\otimes_js\susp_{n_2}q_2\right)^\sigma\\
		=&\ \genmaninDual_\Psi\left((-1)^{(j-1)(n_2-1)+\sigma + n_1}(\ell_{n_1}\otimes_j\ell_{n_2})^\sigma\right)\left(s\susp_{n_1}q_1\otimes_js\susp_{n_2}q_2\right)^\sigma\\
		=&\ (-1)^{(j-1)(n_2-1)+\sigma + n_1}\gamma_\P\left((-1)^{n_2(n_1+|q_1|)}\genmaninDual_\Psi(\ell_{n_1})(s\susp_{n_1}q_1)\otimes\genmaninDual_\Psi(\ell_{n_2})(s\susp_{n_2}q_2)\right)^\sigma\\
		=&\ (-1)^{(j-1)(n_2-1)+\sigma + n_1 + n_2|q_1| + n_1n_2}\gamma_\P\left((-1)^{n_1-1+\frac{n_1(n_1-1)}{2}}\Psi(q_1)\otimes_j(-1)^{n_2-1+\frac{n_2(n_2-1)}{2}}\Psi(q_2)\right)^\sigma\\
		=&\ (-1)^{(j-1)(n_2-1)+\sigma + n_1 + n_2|q_1| + \frac{n(n-1)}{2}}\gamma_\P\left(\Psi(q_1)\otimes_j\Psi(q_2)\right)^\sigma.
	\end{align*}
	Comparing the signs and using the fact that $\Psi$ is a morphism of operads, we see that the two expressions are equal. Therefore, the morphism of algebraic operads $\genmaninDual_\Psi$ commutes with the differentials. Checking the compatibility with compositions is straightforward and left to the reader.
\end{proof}

\begin{corollary} \label{cor:LinftyOnHom}
	Let $\Psi:\Q\to\P$ be a morphism of dg operads, let $A$ be a $\P$-algebra, and let $D$ be a $\bar(\susp\otimes\Q)$-coalgebra. Then the chain complex $\hom(D,A)$ carries the structure of an $\L_\infty$-algebra by pulling back by the morphism $\genmaninDual_\Psi$. We denote the resulting $\L_\infty$-algebra by $\hom^\Psi(D,A)$.
\end{corollary}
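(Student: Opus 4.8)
The plan is to obtain the claimed $\L_\infty$-algebra structure as a composite of two morphisms of dg operads that are already at our disposal, so that essentially no new computation is required. Recall that, since $\L_\infty$ is a dg operad, endowing the chain complex $\hom(D,A)$ with an $\L_\infty$-algebra structure amounts precisely to producing a morphism of dg operads $\L_\infty\to\End_{\hom(D,A)}$. I would produce such a morphism by factoring it through the convolution operad $\hom(\bar(\susp\otimes\Q),\P)$.

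First I would note that the hypotheses of Proposition \ref{prop:convolutionAlg} are met with $\C=\bar(\susp\otimes\Q)$: since $\Q$ is augmented, the Hadamard product $\susp\otimes\Q$ is an augmented dg operad, hence its bar construction $\bar(\susp\otimes\Q)$ is a (conilpotent, coaugmented) dg cooperad. As $D$ is a $\bar(\susp\otimes\Q)$-coalgebra and $A$ is a $\P$-algebra by hypothesis, Proposition \ref{prop:convolutionAlg} then equips $\hom(D,A)$ with the structure of an algebra over the convolution operad $\hom(\bar(\susp\otimes\Q),\P)$, that is, with a morphism of dg operads
\[
\rho:\hom(\bar(\susp\otimes\Q),\P)\longrightarrow\End_{\hom(D,A)}\ .
\]

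Second, Theorem \ref{thm:mainThmDual} furnishes the morphism of dg operads $\genmaninDual_\Psi:\L_\infty\to\hom(\bar(\susp\otimes\Q),\P)$. The composite $\rho\,\genmaninDual_\Psi:\L_\infty\to\End_{\hom(D,A)}$ is then a morphism of dg operads, being a composite of such, and this is exactly the asserted $\L_\infty$-algebra structure; I would define $\hom^\Psi(D,A)$ to be $\hom(D,A)$ equipped with this structure.

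There is no substantive obstacle here: all the genuine work lies in Theorem \ref{thm:mainThmDual}, whose compatibility with the differentials guarantees that $\genmaninDual_\Psi$ is a dg (rather than merely algebraic) operad morphism. The only points worth verifying explicitly are that $\bar(\susp\otimes\Q)$ really is a cooperad, so that the convolution operad and Proposition \ref{prop:convolutionAlg} apply, and that the composite of two dg operad morphisms is again one, which is immediate. Thus the corollary is a formal consequence of the two results cited.
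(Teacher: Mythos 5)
Your proposal is correct and coincides with the paper's own proof, which derives the corollary immediately from Proposition \ref{prop:convolutionAlg} (the convolution-operad algebra structure on $\hom(D,A)$) and Theorem \ref{thm:mainThmDual} (the dg operad morphism $\genmaninDual_\Psi$), exactly as you do by composing $\rho$ with $\genmaninDual_\Psi$. Your write-up merely makes explicit the verification of hypotheses that the paper leaves implicit.
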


\begin{proof}
	This follows immediately from Proposition \ref{prop:convolutionAlg} and Theorem \ref{thm:mainThmDual}.
\end{proof}

\subsection{The dual case}

Let $\P$ and $\Q$ be dg operads. If $\Q$ is reduced and finite dimensional in every arity, then so is $\bar(\susp\otimes\Q)$ and we have a natural isomorphism of operads
\[
\hom(\bar(\susp\otimes\Q),\P)\cong\P\otimes\bar(\susp\otimes\Q)^\vee\cong\P\otimes\Omega((\susp^{-1})^c\otimes\Q^\vee)\ .
\]
Under this correspondence, we get a morphism
\[
\genmanin_\Psi:\L_\infty\longrightarrow\P\otimes\Omega((\susp^{-1})^c\otimes\Q^\vee)\ .
\]
It can be described as follows. The data of a morphism of dg operads
\[
\Psi:\Q\longrightarrow\P
\]
is canonically equivalent to a collection of elements $\Psi_n\in\P(n)\otimes\Q(n)^\vee$ for $n\ge0$ satisfying certain conditions (given in Lemma \ref{lemma:technicalLemma}). Therefore, we can associate to $\Psi$ a map
\[
s^{-1}\lie^{\antishriek}\longrightarrow \P\otimes(s^{-1}(\susp^{-1})^c\otimes\Q^\vee)\subset\P\otimes\Omega((\susp^{-1})^c\otimes\Q^\vee)
\]
by sending $\ell_n$ to $s^{-1}\susp_n^{-1}\Psi_n$, and then commuting $s^{-1}\susp_n^{-1}$ with the part of $\Psi_n$ in $\P(n)$. Notice that as $\Psi$ is equivariant under the action of the symmetric group, the elements $\Psi_n$ are invariant under the action of $\S_n$. Thus the element we obtain with this map carries the sign representation. We denote by
\[
\genmanin_\Psi:\L_\infty\longrightarrow\P\otimes\Omega((\susp^{-1})^c\otimes\Q^\vee)
\]
the unique morphism of algebraic operads extending this map. Explicitly, it is given by taking an element of the operad $\L_\infty$, sending it to $\T(\P\otimes(s^{-1}(\susp^{-1})^c\otimes\Q^\vee))$ using the map given above at every vertex, then applying the morphism $\Phi$ described in Subsection \ref{subsect:freeOperads} to obtain an element of the tensor product $\T(\P)\otimes\T(s^{-1}(\susp^{-1})^c\otimes\Q^\vee)$, and finally applying the operadic composition map of $\P$ to the first part.

\medskip

In this dual setting, the main theorem (\ref{thm:mainThmDual}) becomes as follows.

\begin{theorem} \label{thm:mainThm}
	Let $\Psi:\Q\to\P$	be a morphism of dg operads such that $\Q(n)$ is finite dimensional for all $n\ge0$. Then the morphism of algebraic operads
	\[
	\genmanin_\Psi:\L_\infty\longrightarrow\P\otimes\Omega((\susp^{-1})^c\otimes\Q^\vee)
	\]
	described above commutes with the differentials, i.e. it is a morphism of dg operads. Moreover, it is compatible with compositions in the sense that, if we have a second morphism of dg operads
	\[
	\Theta:\R\longrightarrow\Q
	\]
	with $\R(n)$ finite dimensional for all $n\ge0$, then we get the following commutative diagram
	\begin{center}
		\begin{tikzpicture}
			\node (a) at (0,0){$\L_\infty$};
			\node (b) at (4,2){$\Q\otimes\Omega((\susp^{-1})^c\otimes\R^\vee)$};
			\node (c) at (4,0){$\P\otimes\Omega((\susp^{-1})^c\otimes\R^\vee)$};
			\node (d) at (4,-2){$\P\otimes\Omega((\susp^{-1})^c\otimes\Q^\vee)$};
			
			\draw[->] (a)--node[above]{$\genmanin_\Theta$}(b);
			\draw[->] (a)--node[above]{$\genmanin_{\Psi\Theta}$}(c);
			\draw[->] (a)--node[below]{$\genmanin_\Psi$}(d);
			
			\draw[->] (b)--node[right]{$\Psi\otimes1$}(c);
			\draw[->] (d)--node[right]{$1\otimes\Omega((\susp^{-1})^c\otimes\Theta^\vee)$}(c);
		\end{tikzpicture}
	\end{center}
	in the category of dg operads, that is:
	\[\genmanin_{\Psi\Theta} = (\Psi\otimes1)\genmanin_\Theta = (1\otimes\Omega((\susp^{-1})^c\otimes\Theta^\vee))\genmanin_\Psi\ .
	\]
\end{theorem}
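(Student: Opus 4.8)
The plan is to deduce this statement directly from Theorem \ref{thm:mainThmDual} by transporting it along the natural isomorphism of dg operads
\[
\Xi:\hom(\bar(\susp\otimes\Q),\P)\xrightarrow{\ \cong\ }\P\otimes\Omega((\susp^{-1})^c\otimes\Q^\vee)\ .
\]
This isomorphism is the composite of two pieces: the finite-dimensional duality $\hom(C(n),\P(n))\cong\P(n)\otimes C(n)^\vee$, valid arity-wise because the hypothesis that $\Q(n)$ is finite dimensional forces $\bar(\susp\otimes\Q)(n)$ to be finite dimensional as well; and the bar--cobar duality $\bar(\susp\otimes\Q)^\vee\cong\Omega((\susp\otimes\Q)^\vee)$, together with the identification $(\susp\otimes\Q)^\vee\cong(\susp^{-1})^c\otimes\Q^\vee$ coming from the fact that $(\susp^{-1})^c$ is the dual cooperad of $\susp$. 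Everything then reduces to two points: that $\Xi$ is genuinely an isomorphism of dg operads, and that $\genmanin_\Psi$ is precisely the image of $\genmaninDual_\Psi$ under $\Xi$.

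First I would check that $\Xi$ respects both the operad structure and the differential. For the composition, the convolution product on $\hom(\bar(\susp\otimes\Q),\P)$---which feeds a decomposition of the cooperad into an operadic composition $\gamma_\P$, with the invariants-to-coinvariants passage recalled in the discussion of the convolution operad---corresponds under duality to the tensor-product operad structure on $\P\otimes\Omega((\susp^{-1})^c\otimes\Q^\vee)$, where $\gamma_\P$ is applied to the left factor and the grafting of trees (mediated by the switch map and the morphism $\Phi$ of Subsection \ref{subsect:freeOperads}) to the right. For the differential, the point is that the bar differential $d_1+d_2$ of Subsection \ref{subsect:barCobarOperads} dualizes to the cobar differential: the coderivation $d_2$ extending the composition of $\susp\otimes\Q$ on two-vertex trees dualizes to the derivation $d_2$ extending the infinitesimal decomposition of $(\susp^{-1})^c\otimes\Q^\vee$, while $d_1$ dualizes to $d_1$, the operadic suspension signs matching by the computation already used for such (de)suspensions. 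This is the standard bar--cobar duality in the finite-dimensional setting, so I expect it to go through once the signs are tracked.

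Next I would verify that $\genmanin_\Psi=\Xi\circ\genmaninDual_\Psi$. Since both are morphisms of operads out of $\L_\infty$, whose underlying operad is free on the generators $\ell_n$, it suffices to compare them there. By definition $\genmaninDual_\Psi(\ell_n)=(s^{-1}\susp_n^{-1}\Psi(n))\circ\mathrm{proj}^{(1)}$, which is supported on the weight-$1$ part $s\susp_n\Q(n)$; under the finite-dimensional duality the map $\Psi(n)\in\hom(\Q(n),\P(n))$ corresponds exactly to the element $\Psi_n\in\P(n)\otimes\Q(n)^\vee$, and the weight-$1$ part of the cobar construction is $s^{-1}((\susp^{-1})^c\otimes\Q^\vee)$. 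Thus $\Xi(\genmaninDual_\Psi(\ell_n))=s^{-1}\susp_n^{-1}\Psi_n=\genmanin_\Psi(\ell_n)$, the definitions being set up precisely so that the Koszul sign from commuting $s^{-1}\susp_n^{-1}$ past the $\P(n)$-part matches the sign produced by $\Xi$. Matching on generators and using uniqueness of the operadic extension gives $\genmanin_\Psi=\Xi\circ\genmaninDual_\Psi$ on all of $\L_\infty$; since $\Xi$ is a morphism of dg operads and $\genmaninDual_\Psi$ commutes with differentials by Theorem \ref{thm:mainThmDual}, so does $\genmanin_\Psi$.

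Finally, the compatibility with compositions transports along the same isomorphisms. Under $\Xi$ the pushforward $\Psi_*$ corresponds to $\Psi\otimes1$, and the pullback $\bar(\susp\otimes\Theta)^*$ corresponds to $1\otimes\Omega((\susp^{-1})^c\otimes\Theta^\vee)$, this last identification being again bar--cobar duality applied to $\bar(\susp\otimes\Theta)^\vee\cong\Omega((\susp^{-1})^c\otimes\Theta^\vee)$. The commuting triangle of Theorem \ref{thm:mainThmDual} thus becomes the one in the statement, yielding
\[
\genmanin_{\Psi\Theta}=(\Psi\otimes1)\genmanin_\Theta=(1\otimes\Omega((\susp^{-1})^c\otimes\Theta^\vee))\genmanin_\Psi\ .
\]
The main obstacle is the verification that $\Xi$ is an isomorphism of dg operads---specifically the sign bookkeeping in dualizing the bar differential to the cobar differential and in matching the convolution composition with the tensor-product composition; once this is in place, the remainder is formal.
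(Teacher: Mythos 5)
Your overall scheme---transport Theorem \ref{thm:mainThmDual} along the duality isomorphism $\Xi$---is exactly the first route the paper mentions, but it does not prove the theorem in the stated generality, and the paper explicitly flags why. The gap is your claim that ``the hypothesis that $\Q(n)$ is finite dimensional forces $\bar(\susp\otimes\Q)(n)$ to be finite dimensional as well.'' This is false unless $\Q$ is \emph{reduced}. The bar construction is a tree module on $s(\susp\otimes\overline{\Q})$, and the theorem's hypothesis allows $\Q(0)\neq 0$ (it asks for finite dimensionality for all $n\ge 0$) as well as $\overline{\Q}(1)\neq 0$. In either of those cases a fixed arity supports trees with arbitrarily many vertices: for instance, if $\overline{\Q}(0)\neq 0$ and $\overline{\Q}(2)\neq 0$, one gets arity-$1$ trees consisting of a ladder of binary vertices each with one input capped by an arity-$0$ vertex, one such tree for every length; if $\overline{\Q}(1)\neq 0$, ladders of unary vertices do the same. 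Hence $\bar(\susp\otimes\Q)(n)$ is infinite dimensional, the arity-wise duality $\hom(\bar(\susp\otimes\Q)(n),\P(n))\cong\P(n)\otimes\bar(\susp\otimes\Q)(n)^\vee$ fails (the natural map is not surjective), and the identification $\bar(\susp\otimes\Q)^\vee\cong\Omega((\susp^{-1})^c\otimes\Q^\vee)$ also breaks down, since the dual of the cofree cooperad is then a \emph{completed} (product-over-trees) object rather than the free operad $\T(s^{-1}(\susp^{-1})^c\otimes\overline{\Q}^\vee)$. So your isomorphism $\Xi$ simply does not exist under the stated hypotheses; your argument proves the theorem only under the additional assumption that $\Q$ is reduced.

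This is precisely why the paper does not argue this way: it states that the transport argument works ``for $\Q$ reduced,'' and then gives a direct proof whose whole point is ``to get rid of the assumption that $\Q$ be reduced.'' The paper's actual route is Lemma \ref{lemma:technicalLemma}: a morphism $\Psi$ is encoded by the invariant elements $\Psi_n\in\P(n)\otimes\Q^\vee(n)$ subject to Equations (\ref{eqn:eqn1}) and (\ref{eqn:eqn2}), and one then verifies directly---by a sign computation parallel to the proof of Theorem \ref{thm:mainThmDual}, but carried out inside $\P\otimes\Omega((\susp^{-1})^c\otimes\Q^\vee)$---that $\genmanin_\Psi$ commutes with the differentials and respects compositions. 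To repair your proof you must either weaken the statement by adding the reducedness hypothesis, or abandon $\Xi$ and run the verification directly on the generators $\ell_n$ using the elements $\Psi_n$ and the two equations of the lemma.
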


\begin{corollary}
	Let $\Psi:\Q\to\P$ be a morphism of dg operads such that $\Q(n)$ is finite dimensional for all $n\ge0$, let $A$ be a $\P$-algebra, and let $C$ be a $\Omega((\susp^{-1})^c\otimes\Q^\vee)$-algebra. Then the tensor product $A\otimes C$ carries the structure of an $\L_\infty$-algebra given by pulling back by the morphism $\genmanin_\Psi$. We denote the tensor product $A\otimes C$ equipped with the induced $\L_\infty$-algebra structure by $A\otimes^\Psi C$. This is compatible with Corollary \ref{cor:LinftyOnHom} in the sense that, if $D$ is a finite dimensional $\bar(\susp\otimes\Q)$-coalgebra, then
	\[
	\hom^\Psi(D,A)\cong A\otimes^\Psi D^\vee
	\]
	as $\L_\infty$-algebras via the natural isomorphism.
\end{corollary}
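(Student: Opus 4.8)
The statement has two parts: that $A \otimes C$ carries an $\L_\infty$-structure obtained by pullback along $\genmanin_\Psi$, and that this structure matches the one on $\hom^\Psi(D, A)$ when $C = D^\vee$. The first part I would dispatch immediately. Theorem \ref{thm:mainThm} provides a morphism of dg operads $\genmanin_\Psi : \L_\infty \to \P \otimes \Omega((\susp^{-1})^c \otimes \Q^\vee)$. The structures on $A$ and $C$ are morphisms of dg operads $\P \to \End_A$ and $\Omega((\susp^{-1})^c \otimes \Q^\vee) \to \End_C$, and composing their Hadamard product with the canonical morphism of operads $\End_A \otimes \End_C \to \End_{A \otimes C}$ gives a morphism $\P \otimes \Omega((\susp^{-1})^c \otimes \Q^\vee) \to \End_{A \otimes C}$; precomposing with $\genmanin_\Psi$ then yields the $\L_\infty$-algebra $A \otimes^\Psi C$.

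For the second part I would argue in three steps. First, I would recall that $\genmanin_\Psi$ was defined in the dual-case discussion precisely as the image of $\genmaninDual_\Psi$ under the operad isomorphism $\hom(\bar(\susp \otimes \Q), \P) \cong \P \otimes \Omega((\susp^{-1})^c \otimes \Q^\vee)$, so that these two morphisms correspond to one another by construction. Second --- the heart of the matter --- I would prove that for a finite-dimensional $\C$-coalgebra $D$ and a $\P$-algebra $A$, with $\C = \bar(\susp \otimes \Q)$, the convolution $\hom(\C, \P)$-algebra structure on $\hom(D, A)$ from Proposition \ref{prop:convolutionAlg} corresponds, under the natural chain isomorphism $\hom(D, A) \cong A \otimes D^\vee$ together with the operad isomorphism $\hom(\C, \P) \cong \P \otimes \C^\vee$, to the Hadamard-product algebra structure on $A \otimes D^\vee$ used above, where $D^\vee$ is the dual $\C^\vee \cong \Omega((\susp^{-1})^c \otimes \Q^\vee)$-algebra. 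Third, combining these, the pullbacks along $\genmaninDual_\Psi$ and along $\genmanin_\Psi$ coincide, which is exactly the asserted isomorphism of $\L_\infty$-algebras.

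The work lies entirely in the second step, and it is essentially a matter of unwinding both actions on elements. Writing $\mu \in \hom(\C(n), \P(n))$ as $\sum_j p_j \otimes \delta_j$ with $\mu(c) = \sum_j \langle \delta_j, c \rangle p_j$, and each $\varphi_i \in \hom(D, A)$ as a sum of elementary tensors $a \otimes f$, I expect the composite defining $\gamma_{\hom(D,A)}(\mu; \varphi_1, \ldots, \varphi_n)$ --- namely $\Delta_D(n)$, then $\mu \otimes (\varphi_1 \otimes \cdots \otimes \varphi_n)$, then $\gamma_A$ --- to produce precisely $\gamma_A(p; a_1, \ldots, a_n) \otimes \gamma_{D^\vee}(\delta; f_1, \ldots, f_n)$ in $A \otimes D^\vee$, since pairing $\langle \delta, - \rangle \prod_i \langle f_i, - \rangle$ against $\Delta_D(n)$ is by definition the value of $\gamma_{D^\vee}(\delta; f_1, \ldots, f_n)$ for the dual $\C^\vee$-algebra. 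The hard part will be the Koszul-sign bookkeeping: the signs in the natural isomorphism $\hom(D, A) \cong A \otimes D^\vee$, those relating $\gamma_{D^\vee}$ to the transpose of $\Delta_D$, those from interleaving the $A$- and $D^\vee$-factors in the Hadamard action, and --- as the paper repeatedly warns --- the signs concealed in the passage from invariants to coinvariants, which the convolution action carries out by a fixed identification rather than the averaging isomorphism of the introduction. Checking that all these contributions cancel is the only genuinely delicate point; the conceptual correspondence itself is forced.
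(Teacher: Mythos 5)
Your proposal is correct and coincides with the paper's (entirely implicit) argument: the corollary is stated there without any proof precisely because $\genmanin_\Psi$ is constructed as the image of $\genmaninDual_\Psi$ under the duality isomorphism, and the identification of the convolution $\hom(\bar(\susp\otimes\Q),\P)$-structure on $\hom(D,A)$ with the Hadamard $\P\otimes\Omega((\susp^{-1})^c\otimes\Q^\vee)$-structure on $A\otimes D^\vee$ is exactly the routine-but-sign-heavy unwinding you isolate as your second step. The only caveat is that the operad isomorphism $\hom(\bar(\susp\otimes\Q),\P)\cong\P\otimes\Omega((\susp^{-1})^c\otimes\Q^\vee)$ invoked in your first step requires $\Q$ to be reduced in addition to arity-wise finite dimensional (otherwise $\bar(\susp\otimes\Q)$ need not be arity-wise finite dimensional), whereas the corollary only assumes finite dimensionality; for general $\Q$ one replaces that isomorphism by the canonical operad morphism $\P\otimes\Omega((\susp^{-1})^c\otimes\Q^\vee)\to\hom(\bar(\susp\otimes\Q),\P)$ and checks on the generators $\ell_n$ that it carries $\genmanin_\Psi(\ell_n)$ to $\genmaninDual_\Psi(\ell_n)$, which is immediate from the paper's explicit descriptions of the two morphisms via the elements $\Psi_n$ and $s^{-1}\susp_n^{-1}\Psi(n)$.
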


Theorem \ref{thm:mainThm} can be proven by using Theorem \ref{thm:mainThmDual} and the natural isomorphism
\[
\hom(\bar(\susp\otimes\Q),\P)\cong\P\otimes\Omega((\susp^{-1})^c\otimes\Q^\vee)
\]
for $\Q$ reduced, but it can also directly be proven in this setting using the following lemma characterizing the sequence of elements $\Psi_n$ associated to $\Psi$, which will be used various times in the rest of the paper. This more direct proof allows us to get rid of the assumption that $\Q$ be reduced that we need for the above isomorphism to work.

\begin{lemma} \label{lemma:technicalLemma}
	Let $\P$ and $\Q$ be two dg operads such that $\Q(n)$ is finite dimensional in all arities. For each $n\ge0$, fix a homogeneous basis $q_1(n),\ldots,q_{m(n)}(n)$ of $\Q(n)$. A morphism of dg operads $\Psi:\Q\to\P$ is equivalent, by setting $p_i(n)\coloneqq\Psi(q_i(n))$, to a collection of $\S$-invariant elements
	\[
	\Psi_n \coloneqq \sum_{i=1}^{m(n)}p_i(n)\otimes q_i(n)^\vee\in\P(n)\otimes\Q^\vee(n)\ ,
	\]
	where $\P(n)\otimes\Q(n)^\vee$ is equipped with the diagonal action of the symmetric group, satisfying
	\begin{equation} \label{eqn:eqn1}
		\sum_{i=1}^{m(n)}\left(d_\P(p_i(n))\otimes q_i(n)^\vee + (-1)^{|p_i(n)|}p_i(n)\otimes d_\Q^\vee(q_i(n)^\vee)\right) = 0
	\end{equation}
	in $\P(n)\otimes\Q(n)^\vee$ and
	\begin{equation} \label{eqn:eqn2}
		\begin{aligned}
			\sum&{}_{i=1}^{m(n)}p_i(n)\otimes\Delta^{k,n_1,\ldots,n_k,\sigma}(q_i(n)^\vee) =\\
			=&\ \sum_{\substack{1\le i\le m(k)\\1\le i_j\le m(n_j)}}(-1)^\epsilon\gamma_\P\big(p_i(k)\otimes(p_{i_1}(n_1)\otimes\cdots\otimes p_{i_k}(n_k))^\sigma\big)\otimes\big(q_i^\vee(k)\otimes(q^\vee_{i_1}(n_1)\otimes\cdots\otimes q^\vee_{i_k}(n_k))^\sigma\big)\ ,
		\end{aligned}
	\end{equation}
	whenever $n_1+\cdots+n_k = n$ and $\sigma\in\Sh(n_1,\ldots,n_k)$, where
	\[
	\epsilon = |q_i(k)|\sum_{j=1}^k|q_{i_j}(n_j)| + \sum_{j=1}^k\sum_{j'>j}|q_{i_j}(n_j)||q_{i_{j'}}(n_{j'})|
	\]
	is the Koszul sign obtained by switching elements. This second equation holds in
	\[
	\P(n)\otimes(\Q\otimes_{k,n_1,\ldots,n_k}^\sigma\Q)^\vee\cong\P(n)\otimes(\Q^\vee\otimes_{k,n_1,\ldots,n_k}^\sigma\Q^\vee)\ .
	\]
	Notice that in this last isomorphism, signs may appear because of the Koszul convention.
\end{lemma}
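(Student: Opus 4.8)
The plan is to prove the equivalence by dualizing, arity by arity, the three conditions that characterize a morphism of dg operads — $\S$-equivariance, compatibility with the differentials, and compatibility with the operadic composition — and matching them to $\S$-invariance and the two displayed equations. The only structural input beyond bookkeeping is the finite dimensionality of $\Q(n)$, which supplies the canonical isomorphism of chain complexes $\hom(\Q(n),\P(n))\cong\P(n)\otimes\Q(n)^\vee$. Under it the restriction $\Psi(n)$ corresponds exactly to $\Psi_n=\sum_i p_i(n)\otimes q_i(n)^\vee$ with $p_i(n)=\Psi(q_i(n))$; one checks, as usual, that this element does not depend on the chosen homogeneous basis, so the assignment $\Psi\mapsto(\Psi_n)_n$ is well defined and, being a canonical isomorphism in each arity, is a bijection onto collections $(\Psi_n)_n$ with $\Psi_n\in\P(n)\otimes\Q(n)^\vee$. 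It then remains to translate each of the three axioms.

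First, $\Psi(n)$ is $\S_n$-equivariant if and only if $\Psi_n$ is fixed by the diagonal $\S_n$-action, since the invariants $(\P(n)\otimes\Q(n)^\vee)^{\S_n}$ are precisely the image of $\hom_{\S_n}(\Q(n),\P(n))$ under the isomorphism above. Second, $\Psi(n)$ being a chain map means $d_\P\Psi(n)=\Psi(n)d_\Q$; writing this out in the basis and transposing the action of $d_\Q$ onto the dual basis produces equation \eqref{eqn:eqn1}. Here the only point requiring care is the sign: the convention $d_{\Q^\vee}=-d_\Q^\vee$ together with the Koszul rule forces the appearance of the transpose $d_\Q^\vee$ and the factor $(-1)^{|p_i(n)|}$ generated when the differential passes the left tensor factor.

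The substance of the argument is the third condition. That $\Psi$ commutes with composition means, restricting to each shuffle piece, that the square
\begin{center}
\begin{tikzpicture}
	\node (a) at (0,1.4){$\Q\otimes_{k,n_1,\ldots,n_k}^\sigma\Q$};
	\node (b) at (4.2,1.4){$\Q(n)$};
	\node (c) at (0,0){$\P\otimes_{k,n_1,\ldots,n_k}^\sigma\P$};
	\node (d) at (4.2,0){$\P(n)$};
	\draw[->] (a)--node[above]{$\gamma_\Q$}(b);
	\draw[->] (c)--node[below]{$\gamma_\P$}(d);
	\draw[->] (a)--node[left]{$\Psi\otimes\Psi$}(c);
	\draw[->] (b)--node[right]{$\Psi$}(d);
\end{tikzpicture}
\end{center}
commutes for all $n_1+\cdots+n_k=n$ and $\sigma\in\Sh(n_1,\ldots,n_k)$. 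The plan is to represent both composites $\Q\otimes_{k,n_1,\ldots,n_k}^\sigma\Q\to\P(n)$ as elements of $\P(n)\otimes(\Q\otimes_{k,n_1,\ldots,n_k}^\sigma\Q)^\vee$ — that is, to dualize the source only, keeping $\P(n)$ undualized — and equate them. Since $\Delta^{k,n_1,\ldots,n_k,\sigma}$ is by definition the dual of the restriction of $\gamma_\Q$, the path $\gamma_\Q$ followed by $\Psi(n)$ is represented by $\sum_i p_i(n)\otimes\Delta^{k,n_1,\ldots,n_k,\sigma}(q_i(n)^\vee)$, the left-hand side of \eqref{eqn:eqn2}, while the path $\Psi\otimes\Psi$ followed by $\gamma_\P$ is represented by the right-hand side. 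The conversion between the two, together with the identification $\P(n)\otimes(\Q\otimes_{k,n_1,\ldots,n_k}^\sigma\Q)^\vee\cong\P(n)\otimes(\Q^\vee\otimes_{k,n_1,\ldots,n_k}^\sigma\Q^\vee)$, is exactly what forces the Koszul sign $(-1)^\epsilon$; computing it amounts to recording the signs incurred when commuting the various $q_{i_j}(n_j)^\vee$ past one another and past $q_i(k)^\vee$, which reproduces the stated expression for $\epsilon$.

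For the converse, any collection $(\Psi_n)_n$ that is $\S$-invariant and satisfies \eqref{eqn:eqn1}--\eqref{eqn:eqn2} defines maps $\Psi(n)$ via the inverse isomorphisms, and reversing each of the three matchings above shows that these assemble into a morphism of dg operads; since every step is a bijective correspondence, no further verification is needed. I expect the main obstacle throughout to be purely the sign accounting — pinning down $\epsilon$ in \eqref{eqn:eqn2} and confirming the placement of $(-1)^{|p_i(n)|}$ and of the transpose $d_\Q^\vee$ in \eqref{eqn:eqn1} — rather than any conceptual difficulty, the whole statement being nothing but the finite-dimensional dual reformulation of the definition of a morphism of dg operads.
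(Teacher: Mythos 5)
Your proposal is correct and takes essentially the same route as the paper's proof: both translate the three defining conditions of a morphism of dg operads ($\S$-equivariance, compatibility with differentials, compatibility with operadic composition) through the finite-dimensional duality $\hom(\Q(n),\P(n))\cong\P(n)\otimes\Q(n)^\vee$ into $\S$-invariance of $\Psi_n$ and Equations (\ref{eqn:eqn1}) and (\ref{eqn:eqn2}), respectively. The paper merely carries out explicitly, by evaluating dual-basis pairings $\langle q_i^\vee,-\rangle$ on elements, the sign bookkeeping you defer.
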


From now on, we will ease notation by abstaining from indicating the arity of an element of the basis refers to, and write simply $q_i$ for $q_i(n)$. The correct arities can always easily be recovered from the context.

\begin{proof}
	Fix $n\ge0$. Notice that, as $\Psi$ has degree $0$, we have $|p_i| = |q_i|$ for all $i$.
	
	Invariance of the $\Psi_n$ under the $\S_n$-action is equivalent to the fact that $\Psi$ is equivariant. Let $q\in\Q(n)$ and let $\sigma\in\S_n$, then we have
	\[
	\sum_{i=1}^{m(n)}\langle q_i^\vee,q\rangle p_i = \Psi(q) = \Psi\left(q^{\sigma^{-1}}\right)^\sigma = \sum_{i=1}^{m(n)}\left\langle q_i^\vee,q^{\sigma^{-1}}\right\rangle p_i^\sigma = \sum_{i=1}^{m(n)}\left\langle (q_i^\vee)^\sigma,q\right\rangle p_i^\sigma\ ,
	\]
	and thus
	\[
	\sum_{i=1}^{m(n)}p_i^\sigma\otimes (q_i^\vee)^\sigma = \sum_{i=1}^{m(n)}p_i\otimes q_i^\vee\ .
	\]
	Equation (\ref{eqn:eqn1}) is nothing else than a restatement of the fact that $d_\P\Psi = \Psi d_\Q$. Indeed, let $q\in\Q(n)$, then
	\begin{align*}
		0 =&\ d_\P\Psi(q) -\Psi(d_\Q q)\\
		=&\ \sum_{i=1}^{m(n)}\left(\langle q_i^\vee,q\rangle d_\P p_i - \langle q_i^\vee,d_\Q q\rangle p_i\right)\\
		=&\ \sum_{i=1}^{m(n)}\left(\langle q_i^\vee,q\rangle d_\P p_i - (-1)^{|q_i|+1}\langle d_\Q^\vee q_i^\vee,q\rangle p_i\right)\\
		=&\ \sum_{i=0}^{m(n)}\left(d_\P(p_i)\otimes q_i^\vee + (-1)^{|p_i|}p_i\otimes d_\Q^\vee(q_i^\vee)\right)(q)\ .
	\end{align*}	
	Similarly, Equation (\ref{eqn:eqn2}) is equivalent to the fact that
	\[
	\gamma_\P(\Psi\circ\Psi) = \Psi\gamma_\Q\ .
	\]
	Let
	\[
	r\otimes(r_1,\ldots,r_k)^\sigma\in\Q\otimes_{k,n_1,\ldots,n_k}^\sigma\Q\ ,
	\]
	then
	\begin{align*}
		\Psi\big(\gamma_\Q\big(r\otimes(r_1,\ldots,r_k)^\sigma\big)\big) =&\ \sum_{i=1}^{m(n)}\langle q_i^\vee,\gamma_\Q(r\otimes(r_1,\ldots,r_k)^\sigma)\rangle p_i\\
		=&\ \sum_{i=1}^{m(n)}\langle\Delta^{k,n_1,\ldots,n_k,\sigma}(q_i^\vee),r\otimes(r_1,\ldots,r_k)^\sigma\rangle p_i\\
		=&\ \left(\sum_{i=1}^{m(n)}p_i\otimes\Delta^{k,n_1,\ldots,n_k,e}(q_i^\vee)\right)\big(r\otimes(r_1,\ldots,r_k)^\sigma\big)\ .
	\end{align*}
	On the other hand, we have
	\begin{align*}
		\gamma_\P&\left((\Psi\circ\Psi)\big(r\otimes(r_1\otimes\cdots\otimes r_k)^\sigma\big)\right) = \gamma_\P\big(\Psi(r)\otimes(\Psi(r_1),\ldots,\Psi(r_k))^\sigma\big)\\
		&= \gamma_\P\left(\sum_{\substack{1\le i\le m(k)\\1\le i_j\le m(n_j)}}\langle q_i^\vee,r\rangle\langle q_{i_1}^\vee,r_1\rangle\cdots\langle q_{i_k}^\vee,r_k\rangle p_i\otimes(p_{i_1}\otimes\cdots\otimes p_{i_k})^\sigma\right)\\
		&= \sum_{\substack{1\le i\le m(k)\\1\le i_j\le m(n_j)}}(-1)^\epsilon \langle q_i^\vee\otimes(q_{i_1}^\vee\otimes\cdots\otimes q_{i_k}^\vee)^\sigma,r\otimes(r_1,\ldots,r_k)^\sigma\rangle\gamma_\P\big(p_i\otimes(p_{i_1}\otimes\cdots\otimes p_{i_k})^\sigma\big)\\
		&= \left(\sum_{\substack{1\le i\le m(k)\\1\le i_j\le m(n_j)}}(-1)^\epsilon \gamma_\P\big(p_i\otimes(p_{i_1}\otimes\cdots\otimes p_{i_k})^\sigma\big)\otimes\big(q_i^\vee\otimes(q_{i_1}^\vee\otimes\cdots\otimes q_{i_k}^\vee)^\sigma\big)\right)(r\otimes(r_1,\ldots,r_k)^\sigma)\ ,
	\end{align*}
	completing the proof.
\end{proof}

\subsection{The non-symmetric case}

In the context of non-symmetric operads, our main theorem takes the following form.

\begin{theorem}
	Let $\Psi:\Q\to\P$ be a morphism of dg ns operads. Then there is a canonical morphism of dg ns operads
	\[
	\genmaninnsDual_\Psi:\A_\infty\longrightarrow\hom(\bar(\susp\otimes\Q),\P)\ .
	\]
	If moreover $\Q(n)$ is finite dimensional for all $n\ge0$, this can be dualized to give a canonical morphism of dg ns operads
	\[
	\genmaninns_\Psi:\A_\infty\longrightarrow\P\otimes\Omega((\susp^{-1})^c\otimes\Q^\vee)\ .
	\]
	Both are compatible with compositions, in the sense that if $\Theta:\R\to\Q$ is a second morphism of dg ns operads,
	\[
	\genmaninnsDual_{\Psi\Theta} = \Psi^*\genmaninnsDual_\Theta = \bar(\susp\otimes\Theta)_*\genmaninnsDual_\Psi\ ,
	\]
	and if moreover $\R(n)$ is finite dimensional for all $n\ge 2$, then
	\[
	\genmaninns_{\Psi\Theta} = (\Psi\otimes1)\genmaninns_\Theta = (1\otimes\Omega((\susp^{-1})^c\otimes\Theta^\vee))\genmaninns_\Psi\ .
	\]
\end{theorem}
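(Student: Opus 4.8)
The plan is to transcribe the proofs of Theorems~\ref{thm:mainThmDual} and~\ref{thm:mainThm} into the non-symmetric setting, where they in fact simplify. The starting point is that $\as^{\antishriek}(n)$ is one-dimensional in every arity --- spanned by $\susp_n^{-1}m_n^\vee$ --- exactly as $\lie^{\antishriek}(n)$ is, so the symmetric construction transcribes verbatim with $\L_\infty=\Omega\lie^{\antishriek}$ replaced by $\A_\infty=\Omega\as^{\antishriek}$ and $\ell_n$ by $a_n$. Concretely, I would define $\genmaninnsDual_\Psi$ on the generators of the quasi-free ns operad $\A_\infty$ by sending the arity-$n$ generator $a_n = s^{-1}\susp_n^{-1}m_n^\vee$ to the composite of the projection $\mathrm{proj}^{(1)}:\bar(\susp\otimes\Q)(n)\to s\susp_n\Q(n)$ onto the weight-$1$ part with the map $s^{-1}\susp_n^{-1}\Psi(n)$ sending $s\susp_n q$ to $(-1)^{n-1+\frac{n(n-1)}{2}}\Psi(q)$. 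Since $\A_\infty$ is free as a graded ns operad on these generators, this extends uniquely to a morphism $\genmaninnsDual_\Psi$ of graded ns operads, and the entire content of the statement is that it commutes with the differentials.

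To prove commutation with the differential, I would check on the generators $a_n$, following the proof of Theorem~\ref{thm:mainThmDual} line by line. The differential of the target splits as $(d_\P)_* - (d_1+d_2)^*$, and for weight reasons the verification decomposes into a weight-$1$ and a weight-$2$ part. On weight-$1$ elements $s\susp_n q$ the identity reduces, after cancelling the global sign, to $d_\P\Psi(q) - \Psi(d_\Q q) = 0$, i.e. to the fact that $\Psi$ is a chain map. On weight-$2$ elements the identity matches $\genmaninnsDual_\Psi(a_n)\,d_2$ against $\genmaninnsDual_\Psi(d_{\A_\infty}a_n)$, where now the ns cobar differential $d_{\A_\infty}a_n = \sum_{n_1+n_2=n+1,\,1\le j\le n_1}(\pm)\,a_{n_1}\circ_j a_{n_2}$ carries no sum over shuffles; the computation is therefore a strict simplification of the symmetric one, and both sides reduce to $\gamma_\P(\Psi\circ\Psi) = \Psi\gamma_\Q$. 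The compatibility with compositions (the two displayed identities) is then obtained as in the symmetric case, by writing $\Psi\Theta$ as $\id_\P\cdot(\Psi\Theta)$ and as $(\Psi\Theta)\cdot\id_\R$ and comparing values on generators.

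For the dual statement, assuming $\Q(n)$ finite dimensional in every arity, I would transport $\genmaninnsDual_\Psi$ across the natural isomorphism of ns operads $\hom(\bar(\susp\otimes\Q),\P)\cong\P\otimes\Omega((\susp^{-1})^c\otimes\Q^\vee)$ to obtain $\genmaninns_\Psi$ and read off its explicit description in terms of the elements $\Psi_n\in\P(n)\otimes\Q(n)^\vee$. Alternatively, one proves it directly using the non-symmetric analogue of Lemma~\ref{lemma:technicalLemma}: the $\S$-invariance clause simply drops out, while equations~(\ref{eqn:eqn1}) and~(\ref{eqn:eqn2}) survive unchanged once all the shuffles $\sigma$ are removed. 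The compatibility of $\genmaninns_\Psi$ with compositions then follows by dualizing the corresponding statement for $\genmaninnsDual_\Psi$.

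The main obstacle I anticipate is purely bookkeeping: keeping track of every Koszul sign --- those coming from switching tensor factors, from the composition in $\susp$, and from the suspension $s$ in the cobar construction. Conceptually the non-symmetric case is easier than the symmetric one, since the absence of any symmetric group action removes the shuffle sums and the identifications between invariants and coinvariants altogether; this is also precisely what lets the statement hold over an arbitrary field.
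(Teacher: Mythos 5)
Your proposal is correct and takes essentially the same approach as the paper: the paper's entire proof of this theorem is the remark that the constructions of $\genmaninnsDual_\Psi$ and $\genmaninns_\Psi$ are analogous to those of $\genmaninDual_\Psi$ and $\genmanin_\Psi$, i.e.\ exactly the transcription of the proofs of Theorems \ref{thm:mainThmDual} and \ref{thm:mainThm} (and of Lemma \ref{lemma:technicalLemma}) into the non-symmetric setting that you carry out, with the shuffle sums and the invariants/coinvariants identifications dropped. Your fallback to the ns analogue of Lemma \ref{lemma:technicalLemma} --- rather than relying solely on the isomorphism $\hom(\bar(\susp\otimes\Q),\P)\cong\P\otimes\Omega((\susp^{-1})^c\otimes\Q^\vee)$, which requires $\Q$ reduced --- also mirrors the paper's treatment of the dual statement.
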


The construction of the morphisms $\genmaninnsDual_\Psi$ and $\genmaninns_\Psi$ is analogous to the one for $\genmaninDual_\Psi$ and $\genmanin_\Psi$. At the level of algebras, this gives the following.

\begin{corollary}
	Let $\Psi:\Q\to\P$ be a morphism of dg ns operads, let $A$ be a $\P$-algebra, and let $D$ be a $\bar(\susp\otimes\Q)$-coalgebra. Then the chain complex of linear maps $\hom(D,A)$ carries a structure of $\A_\infty$-algebra by pullback by the morphism $\genmaninnsDual_\Psi$. We denote it again by $\hom^\Psi(D,A)$. If furthermore $\Q(n)$ is finite dimensional for all $n\ge0$ and $C$ is a $\Omega((\susp^{-1})^c\otimes\Q^\vee)$-algebra, then the tensor product $A\otimes C$ carries the structure of an $\A_\infty$-algebra induced by pullback by the morphism $\genmaninns_\Psi$. We denote it by $A\otimes^\Psi C$. Those two structures are compatible, in the sense that if $\Q$ is finite dimensional in all arities and $D$ is a finite dimensional $\bar(\susp\otimes\Q)$-coalgebra, then
	\[
	\hom^\Psi(D,A)\cong A\otimes D^\vee
	\]
	as $\A_\infty$-algebras via the natural isomorphism.
\end{corollary}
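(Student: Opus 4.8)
The plan is to deduce all three assertions from the morphisms of dg ns operads just constructed, together with the two operadic actions recalled earlier in the section. First I would observe that $\hom(D,A)$ is an algebra over the convolution operad $\hom(\bar(\susp\otimes\Q),\P)$ by the non-symmetric analogue of Proposition \ref{prop:convolutionAlg}; pulling this structure back along the morphism $\genmaninnsDual_\Psi$ supplied by the theorem above endows $\hom(D,A)$ with the $\A_\infty$-algebra structure $\hom^\Psi(D,A)$. Symmetrically, the Hadamard tensor product $\P\otimes\Omega((\susp^{-1})^c\otimes\Q^\vee)$ acts naturally on the tensor product $A\otimes C$ of a $\P$-algebra and an $\Omega((\susp^{-1})^c\otimes\Q^\vee)$-algebra --- this is the standard fact that the Hadamard product of operads acts on the tensor product of algebras --- and pulling this action back along $\genmaninns_\Psi$ produces $A\otimes^\Psi C$. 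These two pullbacks establish the first two claims at once.

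For the compatibility isomorphism I would reduce everything to the level of operads. When $\Q(n)$ is finite dimensional in every arity, the morphism $\genmaninns_\Psi$ is, by the same reasoning that relates Theorem \ref{thm:mainThm} to Theorem \ref{thm:mainThmDual}, exactly the image of $\genmaninnsDual_\Psi$ under the natural isomorphism of dg ns operads
\[
\hom(\bar(\susp\otimes\Q),\P)\cong\P\otimes\Omega((\susp^{-1})^c\otimes\Q^\vee)\ .
\]
It then suffices to verify that, for a finite dimensional coalgebra $D$, the natural chain isomorphism $\hom(D,A)\cong A\otimes D^\vee$ intertwines the convolution action with the Hadamard action, compatibly with the operad isomorphism above. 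Granting this, pulling back along the two parallel operad morphisms $\genmaninnsDual_\Psi$ and $\genmaninns_\Psi$ yields one and the same $\A_\infty$-algebra, which is precisely the asserted isomorphism.

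The hard part will be this final compatibility of actions, namely chasing the identification $\hom(\C(n),\P(n))\cong\P(n)\otimes\C(n)^\vee$ through both definitions of the algebra structure. Here the non-symmetric setting is a genuine simplification over the symmetric one: with no symmetric group actions present, none of the delicate passages between invariants and coinvariants used in the construction of the convolution operad intervene, so the verification is essentially bookkeeping. What must still be tracked are the Koszul signs arising from dualization and from the operadic (de)suspensions $\susp$ and $(\susp^{-1})^c$, and these are controlled by exactly the same computations as in Lemma \ref{lemma:technicalLemma} and in the proof of Theorem \ref{thm:mainThmDual}. Once this compatibility is in place, all three statements follow immediately.
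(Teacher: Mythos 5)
Your proposal is correct and matches the paper's (largely implicit) argument: the paper obtains both structures exactly as you do, by pulling back the ns convolution-operad action on $\hom(D,A)$ along $\genmaninnsDual_\Psi$ and the Hadamard-product action on $A\otimes C$ along $\genmaninns_\Psi$, and the compatibility is likewise read off from the natural isomorphism $\hom(\bar(\susp\otimes\Q),\P)\cong\P\otimes\Omega((\susp^{-1})^c\otimes\Q^\vee)$ identifying the two operad morphisms, together with the chain-level identification $\hom(D,A)\cong A\otimes D^\vee$ for finite dimensional $D$. Your observation that the ns setting removes the invariants/coinvariants subtleties, leaving only Koszul-sign bookkeeping of the kind handled in Lemma \ref{lemma:technicalLemma}, is precisely why the paper treats this case as an immediate analogue of Corollary \ref{cor:LinftyOnHom} and its dual.
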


\section{Binary quadratic operads and Manin morphisms} \label{sect:sect3}

In this section, we restrict ourselves to binary quadratic operads and we explore the consequences of our main theorem in this context. We only work in the dual setting, i.e. with tensor products. In the present situation, the statement of Theorem \ref{thm:mainThm} simplifies to the existence of a map of dg operads
\[
\L_\infty\longrightarrow\P\otimes\Q^{\shriek}_\infty\ .
\]
We start by recalling the notion of what we call the Manin morphisms, which are morphisms arising from maps between operads via the adjunction between the black and white Manin products. We go on to prove that the morphisms obtained through our main theorem lift the Manin morphisms to a homotopical context. The principal result of this section lies in the fact that the morphisms obtained through Theorem \ref{thm:mainThm} are compatible with strict morphisms of algebras on one side and $\infty$-morphisms on the other side (Proposition \ref{prop:compatibilityMorphisms}). This amounts to the existence of a bifunctor given on objects by taking a $\P$-algebra and a $\Q^{\shriek}_\infty$-algebra and giving back their tensor product with the $\L_\infty$-algebra structure obtained through Theorem \ref{thm:mainThm} and allowing $\infty$-morphisms in the second slot.

\subsection{Manin morphisms}

In the category of operads given by binary quadratic data and morphisms induced by morphisms of quadratic data, one can define two operations, called the \emph{white} and \emph{black Manin products} and denoted by $\wmanin$ and $\bmanin$ respectively, both taking two binary quadratic operads and giving back another one. These objects first appeared in the context of algebras in \cite{manin87} and \cite{manin89} and then in \cite{ginzburg94} in relation to operads. For a more conceptual treatment, see \cite{vallette08} or \cite[Sect. 8.8]{vallette12}.

\begin{proposition}
	Fix a binary quadratic operad $\Q$. Then there is a natural isomorphism
	\[
	\hom_{\mathsf{bin.\ quad.\ op.}}(\R\bmanin\Q,\P)\cong\hom_{\mathsf{bin.\ quad.\ op.}}(\R,\P\wmanin\Q^{\shriek})\ .
	\]
	That is to say, the functors $-\bmanin\Q$ and $-\wmanin\Q^{\shriek}$ are adjoint. Moreover, the operad $\lie$ is a unit for the black product.
\end{proposition}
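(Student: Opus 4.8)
The plan is to prove the adjunction by reducing both $\hom$-sets to morphisms of operadic quadratic data and then matching them through finite-dimensional linear duality, and to deduce the unit statement from the adjunction itself by a Yoneda argument. Write $\R = \P(G,T)$, $\P = \P(E,R)$ and $\Q = \P(F,S)$, all binary, so that $G,E,F$ are arity-$2$ $\S_2$-modules and $T,R,S$ are sub-$\S$-modules of the corresponding two-vertex tree modules $\T(-)^{(2)}$. By Lemma \ref{lemma:reprPshriek} we have $\Q^{\shriek} = \P(s^{-1}\susp_2^{-1}F^\vee,S^\perp)$. Recall that a morphism in the category of binary quadratic operads is, by definition, one induced by a morphism of operadic quadratic data; hence an element of $\hom(\R\bmanin\Q,\P)$ is the same datum as a map of $\S_2$-modules from the generating space of $\R\bmanin\Q$ to $E$ carrying the relations of $\R\bmanin\Q$ into $R$, and symmetrically for the right-hand side. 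First I would recall from \cite{vallette08} and \cite[Sect.~8.8]{vallette12} the explicit presentations of the two Manin products, so that both $\hom$-sets become sets of relation-preserving generator maps.

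The next step is the generator-level bijection. The generating space of $\R\bmanin\Q$ is $G\otimes F$ placed in the degree dictated by the operadic suspension, while $\P\wmanin\Q^{\shriek}$ is generated by $E\otimes(s^{-1}\susp_2^{-1}F^\vee)$. Because $\Q$ is binary quadratic, $F$ is a finite-dimensional $\S_2$-representation, so the tensor--hom adjunction together with the identification $(s^{-1}\susp_2^{-1}F^\vee)^\vee\cong s\susp_2 F$ yields a natural isomorphism
\[
\hom_{\S_2}\big(G,\,E\otimes s^{-1}\susp_2^{-1}F^\vee\big)\;\cong\;\hom_{\S_2}\big(G\otimes s\susp_2 F,\,E\big),
\]
and $G\otimes s\susp_2 F$ is precisely the generating space of $\R\bmanin\Q$, the factor $s\susp_2$ accounting for the degree shift built into the black product. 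This matches generators on the two sides, naturally in $\R$ (through $G$) and in $\P$ (through $E$).

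The crux is to show that, under this correspondence, a generator map respects the defining relations $T$ of $\R$ (i.e. lands in the white-product relations of $\P\wmanin\Q^{\shriek}$) if and only if its transpose sends the black-product relations of $\R\bmanin\Q$ into $R$. Here one unwinds that the relations of $\P\wmanin\Q^{\shriek}$ are cut out by $R$ together with $S^\perp$, whereas those of $\R\bmanin\Q$ are cut out by $T$ together with $S$, and then uses that $S^\perp$ is exactly the annihilator of $S$ for the canonical nondegenerate pairing between the two-vertex tree modules $\T(F)^{(2)}$ and $\T(s^{-1}\susp_2^{-1}F^\vee)^{(2)}$. The equivalence of the two relation-preservation conditions is then a transpose statement for this pairing. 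This is the step I expect to be the \emph{main obstacle}: it requires pinning down the relation subspaces inside the switch-map decomposition $\T(-\otimes-)^{(2)}\hookrightarrow\T(-)^{(2)}\otimes\T(-)^{(2)}$ and checking that the orthogonality, the operadic-suspension twists, and the Koszul signs all line up. As everything in sight is functorial, naturality of the resulting bijection is automatic.

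Finally, for the unit statement I would deduce it from the adjunction rather than argue directly. From the data recorded in the excerpt one computes $\lie^{\shriek}\cong\com$: indeed $\lie^{\shriek}=(\susp^c\otimes\lie^{\antishriek})^\vee\cong(\susp^c\otimes(\susp^{-1})^c\otimes\com^\vee)^\vee\cong\com$. Moreover $\com$ is the unit for the white product, since $\com(n)=\k$ with trivial $\S_n$-action makes the Hadamard product $\com\otimes_H\P$ equal to $\P$, and the white product inherits this. Specializing the adjunction to $\Q=\lie$ then gives, naturally in $\P$,
\[
\hom(\R\bmanin\lie,\,\P)\;\cong\;\hom(\R,\,\P\wmanin\com)\;\cong\;\hom(\R,\,\P),
\]
so the Yoneda lemma forces $\R\bmanin\lie\cong\R$. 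Since the black product is symmetric, this exhibits $\lie$ as a two-sided unit.
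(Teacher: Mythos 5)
The paper never proves this proposition: it is recalled as background, with references to \cite{manin87}, \cite{ginzburg94}, \cite{vallette08}, and \cite[Sect.~8.8]{vallette12}, so there is no internal proof to compare against. What you outline is, in substance, the standard argument of those sources: reduce both $\hom$-sets to relation-preserving maps of generating $\S_2$-modules, match generator maps by finite-dimensional tensor--hom duality (the twist $s\susp_2$, i.e.\ the sign representation, is exactly what makes $\hom_{\S_2}(G\otimes s\susp_2F,E)\cong\hom_{\S_2}\bigl(G,E\otimes s^{-1}\susp_2^{-1}F^\vee\bigr)$ come out right), translate the two relation conditions into each other via the pairing defining $S^\perp$, and deduce the unit statement from $\lie^{\shriek}\cong\com$, unitality of $\com$ for $\wmanin$, and Yoneda. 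That is the correct route, including the reduction of the unit claim to the white-product unit.

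Three points must be tightened for this to be a proof rather than a plan. First, ``$\Q$ binary quadratic, hence $F$ finite dimensional'' is a non sequitur: finite dimensionality of the generating space is a separate hypothesis (the paper imposes it explicitly at the start of Section~\ref{sect:sect3}), and it is needed both for your duality step and for the presentation of $\Q^{\shriek}$ in Lemma~\ref{lemma:reprPshriek}. Second, the step you defer is the entire content of the adjunction, so it cannot remain an expectation; it does close, and the fact that closes it is elementary: for finite-dimensional spaces, under the shape-by-shape identification $\T(E)^{(2)}\otimes\bigl(\T(F)^{(2)}\bigr)^\vee\cong\hom\bigl(\T(F)^{(2)},\T(E)^{(2)}\bigr)$, the subspace $R\otimes\bigl(\T(F)^{(2)}\bigr)^\vee+\T(E)^{(2)}\otimes S^\perp$ consists exactly of the maps carrying $S$ into $R$ (to see this, split off a complement of $S$). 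Applying this, for each relation $t\in T$, to the weight-two component of $\Phi\circ\T(g)$ --- which, viewed as a map $\T(F)^{(2)}\to\T(E)^{(2)}$, computes $\T(f)$ on the black-product relations lying over $t$ --- yields the equivalence of the two relation-preservation conditions, up to the suspension and sign bookkeeping you mention. Third, $\com$ being a unit for $\wmanin$ does not follow formally from unitality of the Hadamard product, since the white product is only its quadratic approximation; one needs the short check that, under $\T(\k\mu_2\otimes E)^{(2)}\cong\T(E)^{(2)}$, the white-product relation space of $\com\wmanin\P(E,R)$ pulls back to exactly $R$, which holds because all three tree shapes in $\T(\k\mu_2)^{(2)}$ have the same image in $\com(3)$.
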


Therefore, any morphism
\[
\Psi:\lie\bmanin\Q\cong\Q\longrightarrow\P
\]
coming from a quadratic data is equivalent to a morphism
\[
\lie\longrightarrow\P\wmanin\Q^{\shriek}\ .
\]
As explained in \cite[Sect. 3.2]{vallette08}, the white product is the best binary quadratic approximation of the Hadamard product, and there is a canonical morphism
\[
\P\wmanin\Q^{\shriek}\longrightarrow\P\otimes\Q^{\shriek}\ .
\]

\begin{definition}
	We call the composite
	\[
	\manin_\Psi\coloneqq\big(\lie\longrightarrow\P\wmanin\Q^{\shriek}\longrightarrow\P\otimes\Q^{\shriek}\big)
	\]
	the \emph{Manin morphism} associated to $\Psi$.
\end{definition}

The Manin morphism $\manin_\Psi$ has the following explicit description. Assume $\P = \P(E,R)$ and $\Q = \P(F,S)$, fix a basis $f_1,\ldots,f_k$ of $F$, and let $e_1,\ldots,e_k\in E$ be the images of the $f_i$ under $\Psi$. Then $\manin_\Psi$ is the unique morphism of operads extending
\[
\manin_\Psi(b)\coloneqq\sum_{i=1}^ke_i\otimes s^{-1}\susp_2^{-1}f_i^\vee,
\]
where $b\in\lie(2)$ is the Lie bracket. Here, we implicitly used Lemma \ref{lemma:reprPshriek}.

\medskip

For any quadratic binary operad $\Q$, there is a canonical Manin morphism, namely the one associated to the identity of $\Q$, giving
\[
\manin_\Q\coloneqq\manin_{\id_\Q}:\lie\longrightarrow\Q\otimes\Q^!\ .
\]
It is easy to see that
\[
\manin_\Psi = (\Psi\otimes 1)\manin_\Q\ ,
\]
so that it is often only necessary to know $\manin_\Q$ to compute $\manin_\Psi$.

\subsection{Application of the main theorem to the binary quadratic case}

For the rest of this section, we fix two binary quadratic data $(E,R)$ and $(F,S)$ and denote by $\P=\P(E,R)$ and $\Q = \P(F,S)$ the two associated operads. Furthermore, we assume that $F$ is finite dimensional. We fix a morphism
\[
\Psi:\Q\longrightarrow\P
\]
in the category of binary quadratic operads. By Equation (\ref{eq:shriek-antishriek}) we have
\[
\Omega((\susp^{-1})^c\otimes\Q^\vee)\cong\Omega\big(\big(\Q^{\shriek}\big)^{\antishriek}\big)\cong\Q^{\shriek}_\infty\ .
\]
Thus, we can apply Theorem \ref{thm:mainThm} to the morphism $\Psi$ to obtain a map
\[
\genmanin_\Psi:\L_\infty\longrightarrow\P\otimes\Q^{\shriek}_\infty\ .
\]
Here we used the fact that since $F$ is finite dimensional, $\Q(n)$ is finite dimensional for all $n\ge0$. The following proposition shows that in this situation, Theorem \ref{thm:mainThm} provides us with a lifting of the Manin morphisms to operads coding homotopy algebras.

\begin{proposition} \label{prop:ManinGenmanin}
	The following square
	\begin{center}
		\begin{tikzpicture}
			\node (a) at (0,0){$\L_\infty$};
			\node (b) at (2.5,0){$\P\otimes\Q^{\shriek}_\infty$};
			\node (c) at (0,-1.5){$\lie$};
			\node (d) at (2.5,-1.5){$\P\otimes\Q^{\shriek}$};
			
			\draw[->] (a)--node[above]{$\genmanin_\Psi$}(b);
			\draw[->] (a)--(c);
			\draw[->] (b)--(d);
			\draw[->] (c)--node[above]{$\manin_\psi$}(d);
		\end{tikzpicture}
	\end{center}
	where the vertical maps are the canonical ones coming from the resolutions, is commutative.
\end{proposition}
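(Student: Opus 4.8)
The plan is to exploit that both composites around the square are morphisms of operads with source $\L_\infty$, and that $\L_\infty=\Omega\lie^{\antishriek}$ is quasi-free, hence generated as an operad by the elements $\ell_n$ for $n\ge 2$. Since $\genmanin_\Psi$, $\manin_\Psi$ and the two canonical projections are all morphisms of operads, two operad maps out of $\L_\infty$ coincide as soon as they agree on the generators $\ell_n$. I would therefore reduce the whole statement to the single family of equalities
\[
(1\otimes p)\,\genmanin_\Psi(\ell_n)=\manin_\Psi\big(\pi(\ell_n)\big)\qquad(n\ge 2),
\]
where $\pi\colon\L_\infty\to\lie$ and $p\colon\Q^{\shriek}_\infty\to\Q^{\shriek}$ are the canonical resolution projections, and where I have used $\Q^{\shriek}_\infty\cong\Omega\big((\Q^{\shriek})^{\antishriek}\big)$ coming from \eqref{eq:shriek-antishriek}.

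The next step is to describe these projections on generators. Being the canonical map of a Koszul resolution, $\pi$ is determined by the Koszul twisting morphism, which is supported on the weight-$1$ (arity-$2$) part; hence $\pi(\ell_2)=b$ is the Lie bracket and $\pi(\ell_n)=0$ for all $n\ge 3$. Consequently the bottom-left composite sends $\ell_2$ to $\manin_\Psi(b)$ and every $\ell_n$ with $n\ge 3$ to $0$. For the right-hand projection, I would read off from the construction of $\genmanin_\Psi$ that the generator $\ell_n$ is sent into $\P(n)\otimes\big(s^{-1}(\susp^{-1})^c\otimes\Q^\vee\big)(n)$, i.e. into $\P(n)$ tensored with the space of weight-$1$, arity-$n$ generators of $\Q^{\shriek}_\infty$. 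As $p$ is likewise supported on the arity-$2$ generators, $1\otimes p$ annihilates $\genmanin_\Psi(\ell_n)$ for every $n\ge 3$. Thus both composites vanish on all higher generators for free.

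This leaves only the binary generator $\ell_2$, where the two sides must be compared head-on. Using Lemma \ref{lemma:technicalLemma} with a basis $f_1,\dots,f_k$ of $F=\Q(2)$ and $e_i\coloneqq\Psi(f_i)\in E=\P(2)$, the arity-$2$ component of $\Psi$ is $\Psi_2=\sum_i e_i\otimes f_i^\vee$, so by construction
\[
\genmanin_\Psi(\ell_2)=\sum_{i=1}^k e_i\otimes s^{-1}\susp_2^{-1}f_i^\vee,
\]
no sign appearing because $s^{-1}\susp_2^{-1}$ has degree $0$. Applying $1\otimes p$ and using Lemma \ref{lemma:reprPshriek} to identify $\Q^{\shriek}(2)=s^{-1}\susp_2^{-1}F^\vee$ with the image of those generators under $p$, I obtain exactly $\sum_i e_i\otimes s^{-1}\susp_2^{-1}f_i^\vee$, which is precisely the explicit formula for $\manin_\Psi(b)$. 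Hence both composites agree on $\ell_2$ as well, and the square commutes.

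I expect the only genuine subtlety to lie in this binary generator: one must check that the normalisations and Koszul signs hidden in ``$s^{-1}\susp_2^{-1}$'' --- which occur in the definition of $\ell_2$, in the desuspension carried out by $p$ through Lemma \ref{lemma:reprPshriek}, and in the definition of $\manin_\Psi(b)$ --- match on the nose rather than up to a scalar. Since all three are dictated by the same conventions for operadic (de)suspension, I anticipate the agreement to be exact, while the higher-arity generators are handled entirely by the weight argument above.
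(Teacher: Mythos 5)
Your proof is correct and follows essentially the same route as the paper's: reduce to the generators $\ell_n$ of the quasi-free operad $\L_\infty$, observe that both composites kill $\ell_n$ for $n\ge 3$ (the left projection because $\lie$ is quadratic, the right one because $\genmanin_\Psi(\ell_n)$ lands in $\P(n)$ tensored with the weight-$1$ generators, which the resolution map $\Q^{\shriek}_\infty\to\Q^{\shriek}$ annihilates in arity $\ge 3$), and then match the explicit formula $\sum_{i}e_i\otimes s^{-1}\susp_2^{-1}f_i^\vee$ on $\ell_2$. The only difference is presentational: you flag the sign/normalisation check on the binary generator as a potential subtlety, which the paper treats as immediate from the shared suspension conventions.
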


\begin{proof}
	The left vertical arrow sends $\ell_2$ to $b$ and $\ell_n$ to $0$ for all $n\ge3$. Therefore, the south--west composite is the map sends
	\[
	\ell_2\longmapsto \sum_{i=1}^ke_i\otimes s^{-1}\susp^{-1}_2f_i^\vee
	\]
	and all the higher $\ell_n$ to zero. On the other hand, the right vertical arrow is given by tensoring the identity of $\P$ with the canonical resolution map
	\[
	\Q^{\shriek}_\infty\longrightarrow\Q^{\shriek}\ ,
	\]
	which is defined on the generators $s^{-1}(\Q^{\shriek})^{\antishriek}\cong s^{-1}(\susp^{-1})^c\otimes\Q^\vee$ as being the identity on arity $2$ and zero on all higher arities, this because $\Q$ is quadratic. By definition, the morphism $\genmanin_\Psi$ sends $\ell_n$ to an element of $\P(n)\otimes s^{-1}(\Q^{\shriek})^{\antishriek}(n)$. Therefore, the north--east composite gives zero on $\ell_n$, for $n\ge3$ and sends
	\[
	\ell_2\longmapsto \sum_{i=1}^ke_i\otimes s^{-1}\susp^{-1}_2f_i^\vee
	\]
	just like the other map.
\end{proof}

\subsection{Functoriality of the $\L_\infty$-algebra structure on tensor products}

The $\L_\infty$-algebra structure on the tensor product of a $\P$-algebra and a $\Q^{\shriek}_\infty$-algebra is functorial in $\P$-algebras. Indeed, given two $\P$-algebras $A$ and $A'$, a $\Q^{\shriek}_\infty$-algebra $C$, and a morphism
\[
f:A\longrightarrow A'
\]
of $\P$-algebras, there an obvious induced (strict) morphism of $\L_\infty$-algebras
\[
A\otimes^\Psi C\longrightarrow A'\otimes^\Psi C\ ,
\]
given by
\[
a\otimes c\longmapsto f(a)\otimes c\ .
\]
In fact, we have more than that: the functoriality also holds for $\Q^{\shriek}_\infty$-algebras in a strong way, that is with respect to $\infty$-morphisms. 

\medskip

Suppose we are given two $\P$-algebras $A$ and $A'$, as well as two $\Q^{\shriek}_\infty$-algebras $C$ and $C'$, a morphism of $\P$-algebras $f:A\to A'$, and an $\infty$-morphism of $\Q^{\shriek}_\infty$-algebras $g:C\rightsquigarrow C'$. Then we can use these objects to define a morphism of $\lie^{\antishriek}$-coalgebras, i.e. suspended cocommutative coalgebras,
\[
f\otimes^\Psi g:\lie^{\antishriek}(A\otimes C)\longrightarrow\lie^{\antishriek}(A'\otimes C')
\]
as the unique morphism of suspended cofree cocommutative coalgebras extending the map sending
\[
\susp_n^{-1}\mu_n^\vee\otimes(a_1\otimes c_1)\otimes\cdots\otimes(a_n\otimes c_n)\in\lie^{\antishriek}(n)\otimes(A\otimes C)^{\otimes n}
\]
to
\[
\sum_{i=1}^{m(n)}(-1)^{(n-1)|p_i| + \epsilon}f\big(\rho_A(p_i)(a_1,\ldots,a_n)\big)\otimes g_n\big(\susp_n^{-1}q_i^\vee\otimes c_1\otimes\cdots\otimes c_n\big)\ ,
\]
where
\[
\epsilon = \sum_{i=1}^n\sum_{j<i}|a_i||c_j| + (n-1)\sum_{i=1}^n|a_i|
\]
is the sign obtained via Koszul rule by reordering terms, and $\rho_A$ is the $\P$-algebra structure of $A$ seen as a map
\[
\rho_A:\P\longrightarrow\End_A\ .
\]
This morphism can be seen as first sending $\susp_n^{-1}\mu_n^\vee$ to $\P(n)\otimes(\Q^{\shriek})^{\antishriek}$ using $s\genmanin_\Psi s^{-1}$, rearranging terms to get elements of $(\P(n)\otimes_{\S_n} A^{\otimes n})\otimes((\Q^{\shriek})^{\antishriek}(n)\otimes_{\S_n} C^{\otimes n})$, and then applying $f$ and the composition of $A$ on the first part and $g_n$ on the second part.

\begin{proposition} \label{prop:compatibilityMorphisms}
	Let $A,A'$ be $\P$-algebras, and let $C,C'$ be $\Q^{\shriek}_\infty$-algebras. Further, let $f:A\to A'$ be a morphism of $\P$-algebras, and $g:C\rightsquigarrow C'$ be an $\infty$-morphism of $\Q^{\shriek}_\infty$-algebras. The map defined above is a morphism
	\[
	f\otimes^\Psi g:\bar_\iota(A\otimes^\Psi C)\longrightarrow\bar_\iota(A'\otimes^\Psi C')
	\]
	of dg suspended cocommutative coalgebras, i.e. an $\infty$-morphism of $\L_\infty$-algebras. Moreover, if $f':A'\to A''$ is a second morphism of $\P$-algebras and $g':C'\rightsquigarrow C''$ is another $\infty$-morphism of $\Q^{\shriek}_\infty$-algebras, then
	\[
	(f'\circ f)\otimes^\Psi(g'\circ g) = (f'\otimes^\Psi g')\circ(f\otimes^\Psi g)\ .
	\]
\end{proposition}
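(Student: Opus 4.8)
The plan is to prove both assertions by exploiting the universal property of the cofree conilpotent cocommutative coalgebra underlying $\bar_\iota(A'\otimes^\Psi C')$: a morphism of such coalgebras, and more generally a coderivation along such a morphism, is completely determined by its corestriction to the cogenerators $A'\otimes C'$. Writing $G\coloneqq f\otimes^\Psi g$, I first note that $G$ is a morphism of graded cocommutative coalgebras by construction, and that its weight-$n$ Taylor component $G_n\colon\lie^{\antishriek}(n)\otimes(A\otimes C)^{\otimes n}\to A'\otimes C'$ is precisely the map written in the statement. To show $G$ is an $\infty$-morphism I must verify $d_{\bar_\iota(A'\otimes^\Psi C')}\,G=G\,d_{\bar_\iota(A\otimes^\Psi C)}$. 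Both composites are coderivations along $G$ (this is immediate from $G$ being a coalgebra map and $d,d'$ being codifferentials), so it suffices to compare their corestrictions $\mathrm{proj}\circ(-)$ to $A'\otimes C'$, where $\mathrm{proj}\colon\bar_\iota(A'\otimes^\Psi C')\to A'\otimes C'$ is the projection onto cogenerators.

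I would then split each differential as $d=d_1+d_2$ as in the definition of $\bar_\iota$ and match the two contributions separately. For the $d_1$-part, the corestriction of $\mathrm{proj}\,d_1'\,G$ equals $d_{A'\otimes C'}\circ G_n$, whereas $\mathrm{proj}\,G\,d_1$ applies $G_n$ after inserting the internal differential of $A\otimes C$ into a single slot; their equality expresses that the family $(G_n)_n$ is compatible with the internal differentials. This holds because $f$ is a chain map (being a morphism of $\P$-algebras) and $g$ is an $\infty$-morphism, so its components $g_n$ already satisfy the linear compatibility encoded in $g$ being a chain map $\bar_\iota C\to\bar_\iota C'$.

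The heart of the argument is the $d_2$-part, which carries the $\L_\infty$-bracket information. Here $\mathrm{proj}\,d_2'\,G$ evaluates the brackets of $A'\otimes^\Psi C'$ on the several weight components of $G$, which are themselves assembled from the $G_k$ through the cofree-coalgebra comultiplication; dually, $\mathrm{proj}\,G\,d_2$ first applies the brackets of $A\otimes^\Psi C$ (via $\Delta_{(1)}$ and the structure map) and then a single component $G_m$. To match these I would substitute the explicit form of the brackets coming from $\genmanin_\Psi$ together with Lemma~\ref{lemma:technicalLemma}: on the $A$-factor the brackets are built from the operations $\rho_A(p_i)$, and on the $C$-factor from the $\Q^{\shriek}_\infty$-operations associated to $s^{-1}\susp_n^{-1}q_i^\vee$. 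I would then use that $f$ is a \emph{strict} $\P$-morphism, hence commutes with each $\rho_A(p_i)$, to pull $f$ through the $A$-factor, and the defining relations of the $\infty$-morphism $g$ to reorganize the $C$-factor. Since $\genmanin_\Psi$ is a morphism of operads (Theorem~\ref{thm:mainThm}), these two reorganizations recombine exactly into the brackets of $A'\otimes^\Psi C'$ applied to the composed Taylor components, reproducing the left-hand side. The main obstacle is purely bookkeeping: tracking the Koszul signs (the exponents $(n-1)|p_i|$ and $\epsilon$ in the definition of $G$) through the shuffle decompositions of $\Delta_{(1)}$ and checking that both sides agree after invoking the $\S_n$-invariance of the $\Psi_n$; the conceptual content lies entirely in the strictness of $f$, the $\infty$-morphism relations for $g$, and the operadicity of $\genmanin_\Psi$.

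Finally, for the identity $(f'\circ f)\otimes^\Psi(g'\circ g)=(f'\otimes^\Psi g')\circ(f\otimes^\Psi g)$, both sides are again morphisms of cofree cocommutative coalgebras, so it suffices to compare corestrictions. I would expand the corestriction of the right-hand side using the standard formula for the composite of two morphisms of cofree coalgebras in terms of their Taylor components, and compare it with the defining components of the left-hand side. Since $f'\circ f$ is the ordinary composite of strict $\P$-morphisms and the composite $\infty$-morphism $g'\circ g$ is by definition the composition of the underlying coalgebra maps of $g$ and $g'$ (so its components obey exactly the same composition formula), the two corestrictions agree term by term, the signs matching for the same reasons as above. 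This step is essentially formal and should present no difficulty beyond sign-checking.
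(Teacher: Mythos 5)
Your global framework is the correct one and coincides with the paper's implicit strategy: $f\otimes^\Psi g$ is by construction a morphism of cofree suspended cocommutative coalgebras, both $\delta'\circ G$ and $G\circ\delta$ are coderivations along $G$, so it suffices to compare corestrictions onto $A'\otimes C'$; strictness of $f$, Lemma~\ref{lemma:technicalLemma}, and the operadicity of $\genmanin_\Psi$ are indeed the key ingredients, and your treatment of the composition identity is sound. However, the central step of your verification contains a genuine error: the equation $\mathrm{proj}\circ(d_1'+d_2')\circ G=\mathrm{proj}\circ G\circ(d_1+d_2)$ does \emph{not} split into the two separate equations you propose. Your ``$d_1$-part'' asserts that each Taylor component $G_n$ commutes with the internal differentials, which would force each component $g_n$ of the $\infty$-morphism $g$ to be a chain map. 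This is false in general: already in weight $2$ the defining relation of an $\infty$-morphism reads $\partial(g_2)=\pm\bigl(g_1\circ(\text{binary operation of }C)-(\text{binary operation of }C')\circ(g_1\otimes g_1)\bigr)$, which is nonzero exactly when $g$ is not strict --- $g_2$ is a homotopy, not a chain map. The statement ``$g$ is a chain map $\bar_\iota C\to\bar_\iota C'$'' imposes relations on the $g_n$ that intrinsically mix internal differentials with the algebra operations, so it cannot be quoted ``linearly'' to justify the $d_1$-part alone; and once the $d_1$-part fails, the $d_2$-part fails too, since only the sum of the two discrepancies vanishes.

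The correct regrouping --- the one the paper's computation carries out --- is not $d_1$ versus $d_2$ but the $A$-factor versus the $C$-factor. After applying Lemma~\ref{lemma:technicalLemma}~(\ref{eqn:eqn2}) and the strictness of $f$ to both composites, the terms involving only the differentials of $A$ and $A'$, namely $d_{A'}f\rho_A(p_i)\otimes g_n(\cdot)$ versus $f\rho_A(p_i)d_{A^{\otimes n}}\otimes g_n(\cdot)$, match on their own, using only that $f$ is a chain map and that $\rho_A$ is a morphism of dg operads (the operads being quadratic, their internal differentials play no role). All remaining terms sit on the $C$-factor: the internal differential of $C$ (resp.\ $C'$) \emph{together with} the bracket terms reassemble into $f\rho_A(p_i)\otimes\bigl(\mathrm{proj}\circ g\circ d_{\bar_\iota(C)}\bigr)(\cdot)$ on one side and $f\rho_A(p_i)\otimes\bigl(\mathrm{proj}\circ d_{\bar_\iota(C')}\circ g\bigr)(\cdot)$ on the other, and it is only these combined sums that agree, precisely because $g$ commutes with the \emph{total} bar differentials. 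Your proof can be repaired, but the repair changes its architecture: the $C$-side internal-differential terms must stay attached to the bracket terms so that the full $\infty$-morphism relation for $g$ can be invoked once, rather than being split across your two steps.
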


\begin{proof}
	For this proof, let $\rho_A$ be the algebraic structure of $A$ seen as a map
	\[
	\rho_A:\P\longmapsto\End_A
	\]
	and let $\varphi_C$ be the algebraic structure on $C$, but seen as a twisting morphism
	\[
	\varphi_C\in\Tw((\Q^{\shriek})^{\antishriek},\End_C)\cong\Tw((\susp^{-1})^c\otimes\Q^\vee,\End_C)\ ,
	\]
	and similarly for $A'$ and $C'$. The algebraic structure of $A\otimes^\Psi C$ is also seen as a twisting morphism
	\[
	\varphi_{A\otimes C}\in\Tw(\lie^{\antishriek},\End_{A\otimes C})\cong\Tw((\susp^{-1})^c\otimes\com^\vee,\End_{A\otimes C})
	\]
	and the same is true for the one of $A'\otimes^\Psi C'$. We denote by $\delta$ the differentials of both $\bar_\iota(A\otimes^\Psi C)$ and $\bar_\iota(A'\otimes^\Psi C')$.
	
	To ease notation, we adopt the following conventions:
	\begin{itemize}
		\item We usually do not write the elements of the algebras, leaving them implicit. For example, for the element
		\[
		\susp_n^{-1}\mu_n^\vee\otimes(a_1\otimes c_1)\otimes\cdots\otimes(a_n\otimes c_n)\in\lie^{\antishriek}\otimes(A\otimes C)^{\otimes n}
		\]
		we write just $\susp_n^{-1}\mu_n^\vee$.	The reordering of the elements of the algebras are also left implicit, so that $\susp_n^{-1}\mu_n^\vee$ also denotes the element
		\[
		(-1)^\epsilon\susp_n^{-1}\mu_n^\vee\otimes(a_1\otimes\cdots\otimes a_n)\otimes(c_1\otimes\cdots\otimes c_n)\ ,
		\]
		where $\epsilon$ is the appropriate Koszul sign. Notice that, with this convention, we have for example that $d_{(A\otimes C)^{\otimes n}} = d_{A^{\otimes n}\otimes C^{\otimes n}}$.
		\item Whenever we have an element of $\End_A$, $\End_C$, or $\End_{A\otimes C}$, we implicitly apply it to the elements of the respective algebra. For example, we write
		\[
		(1\otimes_j\rho_A)(p_1\otimes_j p_2)
		\]
		with $p_1\in\P(n_1)$ and $p_2\in\P(n_2)$, with $n_1 + n_2 = n + 1$, for
		\[
		(-1)^{|p_2|\sum_{i=1}^{j-1}|a_i|}p_1\otimes(a_1\otimes\cdots\otimes a_{j-1}\otimes\rho(p_2)(a_j\otimes\cdots\otimes a_{j+n_2})\otimes a_{j+n_2+1}\otimes\cdots\otimes a_n)\ .
		\]
		\item We use the fact that
		\[
		\Delta^\tau(\susp_n^{-1}\mu_n^\vee) = T^\tau(\Delta^\tau(\susp_n^{-1})\Delta^\tau(\mu_n^\vee))
		\]
		to avoid writing unnecessary signs. We abuse of notation and also use $T^\tau$ to identify
		\[
		\big((\susp^{-1})^c\big)^\tau\otimes\T(\P)^\tau\otimes\T(\Q^\vee)^\tau
		\]
		with
		\[
		\T(\P)^\tau\otimes\T((\susp^{-1})^c\otimes\Q^\vee)
		\]
		and so on. To ease notation, we also just write $T$ for $T^\tau$, as the underlying tree can always easily be recovered from the context.
	\end{itemize}
	
	We start by proving that $f\otimes^\Psi g$ is an $\infty$-morphism of $\L_\infty$-algebras. We consider
	\[
	\susp_n^{-1}\mu_n^\vee\otimes(a_1\otimes c_1)\otimes\cdots\otimes(a_n\otimes c_n)\in\lie^{\antishriek}\otimes(A\otimes C)^{\otimes n}.
	\]
	We have
	\begin{align*}
		\delta&\left(\susp_n^{-1}\mu_n^\vee\otimes(a_1\otimes c_1)\otimes\cdots\otimes(a_n\otimes c_n)\right) =\\
		& \quad= (-1)^{n-1}\susp_n^{-1}\mu_n^\vee\otimes d_{(A\otimes C)^{\otimes n}} + \sum_{\substack{n_1+n_2 = n+1\\1\le j\le n_1\\\sigma\in\Sh(n_1-1,n_2)}}\left(1\otimes_j\varphi_{A\otimes C}\right)T\left(\Delta^{n_1,n_2,j,\sigma}\left(\susp_n^{-1}\right)(\mu_{n_1}^\vee\otimes_j\mu_{n_2}^\vee)^\sigma\right)\ .
	\end{align*}
	Writing out explicitly $\varphi_{A\otimes C}$, the second term is
	\[
	\sum_{\substack{n_1+n_2 = n+1\\1\le j\le n_1\\\sigma\in\Sh(n_1-1,n_2)}}\left(1\otimes_j(\rho_A\otimes\varphi_C)\right)T\left(\Delta^{n_1,n_2,j,\sigma}\left(\susp_n^{-1}\right)\sum_{1\le i_2\le m(n_2)}(\mu_{n_1}^\vee\otimes_j(p_{i_2}\otimes q_{i_2}^\vee))^\sigma\right)\ .
	\]
	Now we apply $f\otimes^\Psi g$ and then project onto $A'\otimes C'$ to obtain
	\begin{align*}
		(\text{proj}\circ&(f\otimes^\Psi g)\circ\delta)(\susp_n^{-1}\mu_n^\vee) =\\
		=&\ \sum_{i=1}^{m(n)}(-1)^{(n-1)(|p_i| + 1))}(f\rho_A\otimes g_n)(p_i\otimes\susp_n^{-1}q_i^\vee)d_{A^{\otimes n}\otimes C^{\otimes n}}\\
		& + \sum_{\substack{n_1+n_2 = n+1\\1\le j\le n_1\\ \sigma\in\Sh(n_1-1,n_2)}}(f\rho_A\otimes(g_{n_1}\otimes_j\varphi_C))\otimes\\
		&\qquad\qquad\otimes T\left(\Delta^{n_1,n_2,j,\sigma}(\susp_n^{-1})\sum_{\substack{1\le i_1\le m(n_1)\\1\le i_2\le m(n_2)}}(-1)^{|p_{i_2}||q_{i_1}|}\gamma_\P(p_{i_1}\otimes_jp_{i_2})^\sigma\otimes(q_{i_1}^\vee\otimes_jq_{i_2}^\vee)^\sigma\right)\ .
	\end{align*}
	Then, using Lemma \ref{lemma:technicalLemma} (\ref{eqn:eqn2}) and switching terms, remembering that $|\varphi_C| = -1$, this equals
	\begin{align*}
		\sum_{i=1}^{m(n)}&(-1)^{(n-1)(|p_i| + 1)) + (n - 1 + |q_i|)}f\rho_A(p_i)d_{A^{\otimes n}}\otimes g_n(\susp_n^{-1}q_i^\vee)\\
		& + \sum_{i=1}^{m(n)}(-1)^{(n-1)(|p_i| + 1))}f\rho_A(p_i)\otimes g_n(\susp_n^{-1}q_i^\vee)d_{c^{\otimes n}}\\
		& + \sum_{i=1}^{m(n)}(-1)^{n|p_i|}f\rho_A(p_i)\otimes\left(\sum_{\substack{n_1+n_2 = n+1\\1\le j\le n_1\\ \sigma\in\Sh(n_1-1,n_2)}}(g_{n_1}\otimes_j\varphi_C)\Delta^{n_1,n_2,j,\sigma}(\susp_n^{-1}q_i)\right)\\
		=&\ \sum_{i=1}^{m(n)}(-1)^{n|p_i|}f\rho_A(p_i)d_{A^{\otimes n}}\otimes g_n(\susp_n^{-1}q_i^\vee) + \sum_{i=1}^{m(n)}(-1)^{n|p_i|}f\rho_A(p_i)\otimes\left(\text{proj}\circ g\circ d_{\bar_\iota(C)}(\susp_n^{-1}q_i^\vee)\right)\ ,
	\end{align*}
	where we used the fact that $|p_i| = |q_i|$. On the other hand,
	\begin{align*}
		&(f\otimes^\Psi g)\big(\susp_n^{-1}\mu_n^\vee\otimes(a_1\otimes c_1)\otimes\cdots\otimes(a_n\otimes c_n)\big) =\\
		&= \sum_{\substack{k\ge1\\n_1+\ldots+n_k=n\\\sigma\in\Sh(n_1,\ldots,n_k)}}(1\circ((f\rho_A)^{\otimes k}\otimes(g_{n_1}\otimes\cdots\otimes g_{n_k}))\otimes\\
		&\qquad\quad\otimes T\left(\Delta^{k,n_1,\ldots,n_k,\sigma}(\susp_n^{-1})\sum_{\substack{1\le h\le m(k)\\1\le i_j\le m(n_j)}}(-1)^{\epsilon'}(p_h\otimes(p_{i_1}\otimes\cdots\otimes p_{i_k}))^\sigma\otimes(q_h^\vee\otimes(q^\vee_{i_1}\otimes\cdots\otimes q^\vee_{i_k}))^\sigma\right)\ ,
	\end{align*}
	where $\epsilon'$ is the sign appearing by rearranging the $p_{i_j}$'s and $q_{i_j}$'s. Now we apply $\delta$ and project onto $A'\otimes C'$ to get
	\begin{align*}
		d&{}_{A'\otimes C'}(f\rho_A\otimes g_n)\left(\sum_{i=1}^{m(n)}(-1)^{(n-1)|p_i|}p_i\otimes\susp_n^{-1}q_i^\vee\right)\\
		&\quad+ \sum_{\substack{k\ge1\\n_1+\ldots+n_k=n\\\sigma\in\Sh(n_1,\ldots,n_k)}}(\rho_{A'}\circ(f\rho_A)^{\otimes k}\otimes \varphi_{C'}\circ(g_{n_1}\otimes\cdots\otimes g_{n_k}))\otimes\\
		&\qquad\quad\otimes T\left(\Delta^{k,n_1,\ldots,n_k,\sigma}(\susp_n^{-1})\sum_{\substack{1\le h\le m(k)\\1\le i_j\le m(n_j)}}(-1)^{\epsilon'}(p_h\otimes(p_{i_1}\otimes\cdots\otimes p_{i_k}))^\sigma\otimes(q_h^\vee\otimes(q^\vee_{i_1}\otimes\cdots\otimes q^\vee_{i_k}))^\sigma\right)\\
		&= \sum_{i=1}^{m(n)}(-1)^{(n-1)|p_i|}d_{A'\otimes C'}\big(f\rho_A(p_i)\otimes g_n(\susp_n^{-1})\big)\\
		&\quad+ \sum_{\substack{k\ge1\\n_1+\ldots+n_k=n\\\sigma\in\Sh(n_1,\ldots,n_k)}}f\rho_A\otimes(\varphi_{C'}\otimes(g_{n_1}\otimes\cdots\otimes g_{n_k}))\otimes\\
		&\qquad\quad\otimes T\left(\Delta^{k,n_1,\ldots,n_k,\sigma}(\susp_n^{-1})\sum_{\substack{1\le h\le m(k)\\1\le i_j\le m(n_j)}}(-1)^{\epsilon'}(p_h\circ(p_{i_1},\ldots,p_{i_k}))^\sigma\otimes(q_h^\vee\otimes(q^\vee_{i_1}\otimes\cdots\otimes q^\vee_{i_k}))^\sigma\right)\ ,
	\end{align*}
	where we used the fact that $f$ is a strict morphism of $\P$-algebras. By Lemma \ref{lemma:technicalLemma} (\ref{eqn:eqn2}) and a switch, the last term equals equals
	\[
	\sum_{\substack{k\ge1\\n_1+\ldots+n_k=n\\\sigma\in\Sh(n_1,\ldots,n_k)}}(-1)^{(n-1)|p_i|}(f\rho_A)\otimes(\varphi_{C'}\otimes(g_{n_1}\otimes\cdots\otimes g_{n_k}))\otimes\sum_{1=1}^{m(n)}p_i\otimes\Delta^{k,n_1,\ldots,n_k,\sigma}(\susp_n^{-1}q_i^\vee)\ ,
	\]
	and so the whole expression evaluates to
	\begin{align*}
		\sum_{i=1}^{m(n)}&(-1)^{(n-1)|p_i|}d_{A'}f\rho_A(p_i)\otimes g_n(\susp_n^{-1}q_i^\vee) + \sum_{i=1}^{m(n)}(-1)^{n|p_i|}f\rho_A(p_i)\otimes d_{C'}g_n(\susp_n^{-1}q_i^\vee)\\
		& + \sum_{\substack{k\ge1\\n_1+\ldots+n_k=n\\\sigma\in\Sh(n_1,\ldots,n_k)}}(-1)^{n|p_i|} f\rho_A(p_i)\otimes\left(\sum_{i=1}^{m(n)}(\varphi_{C'}\otimes(g_{n_1}\otimes\cdots\otimes g_{n_k}))\right)\Delta^{k,n_1,\ldots,n_k,\sigma}(\susp_n^{-1}q_i^\vee)\\
		=&\ \sum_{i=1}^{m(n)}(-1)^{(n-1)|p_i|}d_{A'}f\rho_A(p_i)\otimes g_n(\susp_n^{-1}q_i^\vee)\\
		& + \sum_{\substack{k\ge1\\n_1+\ldots+n_k=n\\\sigma\in\Sh(n_1,\ldots,n_k)}}(-1)^{n|p_i|}f\rho_A(p_i)\otimes\big(\text{proj}\circ d_{\bar_\iota(C')}\circ g(\susp_n^{-1}q_i^\vee)\big)\ .
	\end{align*}
	As $g$ is an $\infty$-morphism of $\Q^{\shriek}_\infty$-algebras, comparison of the term we evaluated above implies
	\[
	(f\otimes^\Psi g)\circ\delta = \delta\circ(f\otimes^\Psi g)\ ,
	\]
	so that $(f\otimes^\Psi g)$ is indeed an $\infty$-morphism of $\L_\infty$-algebras.
	
	The proof of the fact that this assignment respects compositions can be proven in a similar way, and is left as an exercise to the dedicated reader.
\end{proof}

This result amounts to the following statement.

\begin{theorem} \label{thm:bifunctor}
	There is a bifunctor
	\[
	\otimes^\Psi:\P\text{-}\mathsf{alg}\times\infty\text{-}\Q^{\shriek}_\infty\text{-}\mathsf{alg}\longrightarrow\infty\text{-}\L_\infty\text{-}\mathsf{alg}
	\]
	taking a $\P$-algebra $A$ and a $\Q^{\shriek}_\infty$-algebra $C$ and giving back $A\otimes^\Psi C$. The action on maps is given by Proposition \ref{prop:compatibilityMorphisms}.
\end{theorem}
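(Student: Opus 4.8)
The plan is to recognize that a bifunctor $\mathcal{C}\times\mathcal{D}\to\mathcal{E}$ is just an ordinary functor out of the product category, so that proving Theorem~\ref{thm:bifunctor} reduces to checking the three functoriality requirements for $\otimes^\Psi$: that it is well defined on objects, that it sends a pair of morphisms to a morphism, and that it preserves identities and composition. Two of these are already in hand. On objects, the corollary to Theorem~\ref{thm:mainThm} guarantees that for a $\P$-algebra $A$ and a $\Q^{\shriek}_\infty$-algebra $C$ the tensor product $A\otimes^\Psi C$ is a genuine $\L_\infty$-algebra. On morphisms, the first part of Proposition~\ref{prop:compatibilityMorphisms} shows that a strict morphism $f\colon A\to A'$ together with an $\infty$-morphism $g\colon C\rightsquigarrow C'$ produces an honest $\infty$-morphism $f\otimes^\Psi g\colon A\otimes^\Psi C\rightsquigarrow A'\otimes^\Psi C'$, while the second part establishes $(f'\circ f)\otimes^\Psi(g'\circ g)=(f'\otimes^\Psi g')\circ(f\otimes^\Psi g)$. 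Since composition in the product category $\P\text{-}\mathsf{alg}\times\infty\text{-}\Q^{\shriek}_\infty\text{-}\mathsf{alg}$ is taken componentwise, this is exactly the composition axiom.

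What remains is preservation of identities, the one point not covered by Proposition~\ref{prop:compatibilityMorphisms}. Here I would argue directly from the explicit formula defining $f\otimes^\Psi g$ on the cogenerators. The identity $\infty$-morphism of $C$ has linear component equal to $\id_C$ and all higher components equal to zero; likewise for the identity of $A$. Setting $f=\id_A$ and $g=\id_C$ in the defining formula, every summand indexed by an arity $n\ge2$ carries a factor $g_n=0$ and hence vanishes, so that only the arity-one contribution survives. On that term one uses that $\Psi$ is unital, $\Psi(\id_\Q)=\id_\P$, so that $\rho_A$ applied to the image of the operadic unit is $\id_A$, and that the accompanying sign $(-1)^{(n-1)|p_i|+\epsilon}$ degenerates to $+1$ when $n=1$; the map then reduces to $a\otimes c\mapsto a\otimes c$. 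Thus the corestriction of $\id_A\otimes^\Psi\id_C$ to the cogenerators $A\otimes C$ is precisely the canonical projection, which is also the corestriction of the identity endomorphism of the cofree coalgebra underlying $\bar_\iota(A\otimes^\Psi C)$. By the universal property of cofree cocommutative coalgebras this forces $\id_A\otimes^\Psi\id_C=\id_{A\otimes^\Psi C}$.

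The main obstacle is not conceptual, since the genuine analytic content was already carried out in Proposition~\ref{prop:compatibilityMorphisms}; it is rather one of careful bookkeeping. One must correctly identify the identity $\infty$-morphism with its component data, isolate the arity-one contribution and verify it simplifies to the identity on $A\otimes C$ (which relies crucially on the unitality of $\Psi$), and confirm that the Koszul sign in the defining formula vanishes on that surviving term. Once these verifications are assembled, the identity and composition axioms together promote the object- and morphism-level constructions of $\otimes^\Psi$ into the asserted bifunctor.
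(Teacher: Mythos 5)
Your proposal is correct and takes essentially the same route as the paper: the paper gives no separate proof of Theorem \ref{thm:bifunctor}, treating it as an immediate repackaging of the corollary to Theorem \ref{thm:mainThm} (well-definedness on objects) and Proposition \ref{prop:compatibilityMorphisms} ($\infty$-morphism-level well-definedness and compatibility with composition). Your additional check that $\id_A\otimes^\Psi\id_C$ is the identity $\infty$-morphism --- isolating the arity-one component, using unitality of $\Psi$ and the vanishing of the Koszul sign for $n=1$, and invoking the universal property of the cofree coalgebra --- is sound and fills in a point the paper leaves implicit.
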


\begin{remark}
	One can write an analogous version of the results presented in this section and the next one for the homotopy algebra structures obtained on spaces of linear maps through Theorem \ref{thm:mainThmDual}. The reason we have not done so is that the notion of $\infty$-morphisms and the Homotopy Transfer Theorem for homotopy coalgebras --- which are expected to work in a similar way to the notion for algebras --- have never been developed in the literature, and it is not in the scope of the present article to do it.
\end{remark}

\subsection{The non-symmetric case}

There is a theory of Manin products also in the ns case, see \cite[Sect. 8.8.9]{vallette12}, with $\as$ taking the role of $\lie$ as unit for the black product. As above, we can use the adjunction between the products to associate to a morphism $\Psi:\Q\to\P$ of ns binary quadratic operads (coming from a morphism of the underlying quadratic data) a morphism
\[
\maninns_\Psi:\as\longrightarrow\P\otimes\Q^{\shriek}\ .
\]

\begin{proposition}
	The following square
	\begin{center}
		\begin{tikzpicture}
			\node (a) at (0,0){$\A_\infty$};
			\node (b) at (2.5,0){$\P\otimes\Q^{\shriek}_\infty$};
			\node (c) at (0,-1.5){$\as$};
			\node (d) at (2.5,-1.5){$\P\otimes\Q^{\shriek}$};
			
			\draw[->] (a)--node[above]{$\genmaninns_\Psi$}(b);
			\draw[->] (a)--(c);
			\draw[->] (b)--(d);
			\draw[->] (c)--node[above]{$\maninns_\psi$}(d);
		\end{tikzpicture}
	\end{center}
	where the vertical maps are the canonical ones coming from the resolutions, is commutative.
\end{proposition}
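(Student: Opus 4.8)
The plan is to transcribe the proof of Proposition~\ref{prop:ManinGenmanin} into the non-symmetric setting; the argument carries over essentially verbatim and is in fact marginally lighter, since no symmetric group actions intervene. As all four edges of the square are morphisms of ns operads, it suffices to check commutativity on the generators $a_n$, $n\ge2$, of $\A_\infty$.

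First I would pin down the two vertical maps. The left-hand arrow is the canonical resolution $\A_\infty=\Omega\as^{\antishriek}\to\as$, which sends the binary generator $a_2$ to the generating product $m_2\in\as(2)$ and every higher $a_n$ to $0$. The right-hand arrow is $1_\P$ tensored with the canonical resolution $\Q^{\shriek}_\infty=\Omega\big((\Q^{\shriek})^{\antishriek}\big)\to\Q^{\shriek}$; on the generators $s^{-1}(\Q^{\shriek})^{\antishriek}\cong s^{-1}(\susp^{-1})^c\otimes\Q^\vee$ (using the ns analogue of Equation~(\ref{eq:shriek-antishriek})) this map is the identity in arity $2$ and zero in all higher arities, the latter because $\Q$, and hence $\Q^{\shriek}$, is quadratic.

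Next I would evaluate both composites on each $a_n$. Going down-then-right, the generators $a_n$ with $n\ge3$ are killed already by the left arrow, while $a_2\mapsto m_2$ is sent by $\maninns_\Psi$ to $\sum_{i=1}^k e_i\otimes s^{-1}\susp_2^{-1}f_i^\vee$, the explicit value of the Manin morphism. Going right-then-down, the construction of $\genmaninns_\Psi$ places the image of each $a_n$ in the weight-$1$ part $\P(n)\otimes s^{-1}(\Q^{\shriek})^{\antishriek}(n)$ of $\P\otimes\Q^{\shriek}_\infty$; composing with the right vertical resolution therefore annihilates the images of all $a_n$ with $n\ge3$ and leaves $\genmaninns_\Psi(a_2)$ untouched.

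The one point demanding care --- the closest thing to an obstacle, though a mild one --- is to confirm that $\genmaninns_\Psi(a_2)$ equals $\maninns_\Psi(m_2)$. This is immediate once the construction of $\genmaninns_\Psi$ on the binary generator is spelled out: both expressions are built from the images $e_i=\Psi(f_i)$ of a fixed basis $f_1,\dots,f_k$ of $F$ under $\Psi$, so each yields $\sum_{i=1}^k e_i\otimes s^{-1}\susp_2^{-1}f_i^\vee$. Since the two composites then agree on every generator $a_n$ and all four maps are operad morphisms, the square commutes.
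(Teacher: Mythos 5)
Your proposal is correct and follows exactly the paper's approach: the paper gives no separate proof for this ns statement, implicitly relying on the proof of Proposition \ref{prop:ManinGenmanin}, which is precisely the argument you transcribe (check on generators, note that both vertical resolutions kill everything in arity $\ge 3$ by quadraticity, and match the arity-$2$ images $\sum_{i=1}^k e_i\otimes s^{-1}\susp_2^{-1}f_i^\vee$). Your verification that $\genmaninns_\Psi(a_2)=\maninns_\Psi(m_2)$ via the common basis description is exactly the content of the corresponding step in the symmetric case.
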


The result analogous to the compatibility of $\genmanin_\Psi$ with various notions of morphisms into play takes the following form.

\begin{proposition} \label{propNS:compatibilityMorphisms}
	Let $\Psi:\Q\to\P$ be a morphism of dg ns binary quadratic operads such that $\Q$ is finitely generated. Let $A,A'$ be $\P$-algebras and let $C,C'$ be $\Q^{\shriek}_\infty$-algebras. Further, let $f:A\to A'$ be a morphism of $\P$-algebras and $g:C\rightsquigarrow C'$ be an $\infty$-morphism of $\Q^{\shriek}_\infty$-algebras. Then there is a canonical $\infty$-morphism of $\A_\infty$-algebras
	\[
	f\otimes^\Psi g:\bar_\iota(A\otimes^\Psi C)\longrightarrow\bar_\iota(A'\otimes^\Psi C')\ ,
	\]
	constructed analogously to the symmetric case. Moreover, if $f':A'\to A''$ is a second morphism of $\P$-algebras and $g':C'\rightsquigarrow C''$ is another $\infty$-morphism of $\Q^{\shriek}_\infty$-algebras, then
	\[
	(f'\circ f)\otimes^\Psi(g'\circ g) = (f'\otimes^\Psi g')\circ(f\otimes^\Psi g)\ .
	\]
\end{proposition}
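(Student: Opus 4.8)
The plan is to transcribe the proof of Proposition~\ref{prop:compatibilityMorphisms} into the non-symmetric setting, exploiting the fact that the absence of a symmetric group action removes all sums over shuffles and all equivariance conditions. First I would record the non-symmetric analogue of Lemma~\ref{lemma:technicalLemma}: since $\Q$ is finitely generated binary quadratic, each $\Q(n)$ is finite dimensional, so fixing a homogeneous basis $q_1,\ldots,q_{m(n)}$ of $\Q(n)$ and setting $p_i\coloneqq\Psi(q_i)$ produces elements $\Psi_n=\sum_i p_i\otimes q_i^\vee\in\P(n)\otimes\Q^\vee(n)$ encoding $\Psi$. The proof of the lemma goes through verbatim, except that the $\S_n$-invariance clause becomes vacuous and that in Equation~(\ref{eqn:eqn2}) one drops the shuffle $\sigma$ and the sum over $\Sh(n_1,\ldots,n_k)$, keeping only the order-preserving decomposition $\Delta^{k,n_1,\ldots,n_k}$ and the Koszul switch sign $\epsilon$.

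With this in hand I would define $f\otimes^\Psi g$ as the unique morphism of suspended cofree coassociative coalgebras $\as^{\antishriek}(A\otimes C)\to\as^{\antishriek}(A'\otimes C')$ extending the map sending
\[
\susp_n^{-1}m_n^\vee\otimes(a_1\otimes c_1)\otimes\cdots\otimes(a_n\otimes c_n)
\]
to
\[
\sum_{i=1}^{m(n)}(-1)^{(n-1)|p_i|+\epsilon}f\big(\rho_A(p_i)(a_1,\ldots,a_n)\big)\otimes g_n\big(\susp_n^{-1}q_i^\vee\otimes c_1\otimes\cdots\otimes c_n\big)\ ,
\]
with $\epsilon$ the same Koszul reordering sign as in Proposition~\ref{prop:compatibilityMorphisms}, but now with $m_n^\vee$ in place of $\mu_n^\vee$. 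As in the symmetric case, this map can be read as first sending $\susp_n^{-1}m_n^\vee$ into $\P(n)\otimes(\Q^{\shriek})^{\antishriek}$ via $s\genmaninns_\Psi s^{-1}$, rearranging terms, and then applying $f$ together with the composition of $A$ on the $\P$-part and $g_n$ on the $(\Q^{\shriek})^{\antishriek}$-part.

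The core of the argument is to check that $f\otimes^\Psi g$ commutes with the two bar differentials $\delta$ on $\bar_\iota(A\otimes^\Psi C)$ and $\bar_\iota(A'\otimes^\Psi C')$. I would run the same two computations as in Proposition~\ref{prop:compatibilityMorphisms}, evaluating $\mathrm{proj}\circ(f\otimes^\Psi g)\circ\delta$ and $\mathrm{proj}\circ\delta\circ(f\otimes^\Psi g)$ on a generator. Each side splits into an internal-differential term and a structure term; the structure terms are rewritten using the non-symmetric Equation~(\ref{eqn:eqn2}), which trades $\sum_i p_i\otimes\Delta^{k,n_1,\ldots,n_k}(q_i^\vee)$ for the composite in $\P$ tensored with the decomposition in $\Q^\vee$. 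Strictness of $f$ lets me pull $\rho_{A'}\circ(f\rho_A)^{\otimes k}$ through to $f\rho_A\circ\gamma_\P$, while the $\infty$-morphism relation for $g$ identifies the remaining $(\Q^{\shriek})^{\antishriek}$-part with $\mathrm{proj}\circ g\circ d_{\bar_\iota(C)}$ on one side and $\mathrm{proj}\circ d_{\bar_\iota(C')}\circ g$ on the other. Matching the two yields $(f\otimes^\Psi g)\circ\delta=\delta\circ(f\otimes^\Psi g)$, so $f\otimes^\Psi g$ is an $\infty$-morphism of $\A_\infty$-algebras; the identity $(f'\circ f)\otimes^\Psi(g'\circ g)=(f'\otimes^\Psi g')\circ(f\otimes^\Psi g)$ is then verified on generators exactly as in the symmetric case.

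I expect the only genuine obstacle to be the Koszul sign bookkeeping in the two differential computations. Here, however, the signs are governed by the same switch and suspension conventions as in Proposition~\ref{prop:compatibilityMorphisms} \emph{minus} the shuffle contributions and the sign-representation contributions of $\susp_n$, so every surviving sign already appears in the symmetric argument. Consequently, once the non-symmetric form of Equation~(\ref{eqn:eqn2}) is established the verification is a strict simplification of the one already carried out, and I would, as the author does, relegate the remaining routine details to the dedicated reader.
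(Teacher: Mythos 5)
Your proposal is correct and matches the paper's approach exactly: the paper gives no separate proof of the non-symmetric statement, relying precisely on the transcription of the proof of Proposition~\ref{prop:compatibilityMorphisms} that you carry out, with $\as^{\antishriek}$-coalgebras replacing $\lie^{\antishriek}$-coalgebras, the shuffle sums and equivariance conditions dropped, and the same Koszul sign bookkeeping otherwise. Your observations that finite generation of the binary quadratic $\Q$ gives finite dimensionality of each $\Q(n)$, and that the surviving signs are a strict subset of those in the symmetric computation, are exactly the points that make the transcription legitimate.
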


Again as before, this amounts to the following statement.

\begin{theorem} \label{thmNS:bifunctor}
	There is a bifunctor
	\[
	\otimes^\Psi:\P\text{-}\mathsf{alg}\times\infty\text{-}\Q^{\shriek}_\infty\text{-}\mathsf{alg}\longrightarrow\infty\text{-}\A_\infty\text{-}\mathsf{alg}
	\]
	taking a $\P$-algebra $A$ and a $\Q^{\shriek}_\infty$-algebra $C$ and giving back $A\otimes^\Psi C$, with the action on maps being given by Proposition \ref{propNS:compatibilityMorphisms}.
\end{theorem}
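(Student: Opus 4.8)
The plan is to recognize that Theorem \ref{thmNS:bifunctor} is the functorial repackaging of Proposition \ref{propNS:compatibilityMorphisms}, so the proof reduces to assembling the object assignment, the morphism assignment, and the two functoriality axioms from results already in hand. First I would fix the assignment on objects: given a $\P$-algebra $A$ and a $\Q^{\shriek}_\infty$-algebra $C$, the non-symmetric analogue of the corollary following Theorem \ref{thm:mainThm} endows $A \otimes C$ with an $\A_\infty$-algebra structure by pullback along $\genmaninns_\Psi$, which is precisely $A \otimes^\Psi C$; this is a well-defined object of $\infty\text{-}\A_\infty\text{-}\mathsf{alg}$.

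Next I would read off the assignment on morphisms from Proposition \ref{propNS:compatibilityMorphisms}. That proposition already delivers the two facts that carry all the content: it produces, from a strict morphism $f \colon A \to A'$ of $\P$-algebras and an $\infty$-morphism $g \colon C \rightsquigarrow C'$ of $\Q^{\shriek}_\infty$-algebras, a genuine $\infty$-morphism $f \otimes^\Psi g \colon \bar_\iota(A \otimes^\Psi C) \to \bar_\iota(A' \otimes^\Psi C')$ of $\A_\infty$-algebras, and it establishes the compatibility with composition $(f' \circ f) \otimes^\Psi (g' \circ g) = (f' \otimes^\Psi g') \circ (f \otimes^\Psi g)$. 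These are exactly the statements that $\otimes^\Psi$ lands in the correct category and respects the composition law of a bifunctor.

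The only axiom not explicitly contained in the proposition is preservation of identities, which I would check by directly unwinding the defining formula. The identity $\infty$-morphism on $C$ has its first Taylor coefficient equal to the identity and all higher coefficients zero, and the same holds trivially for $\id_A$; substituting these into the construction of $f \otimes^\Psi g$ leaves only the summand indexed by $n = 1$ and the trivial tree, which reproduces the identity on $A \otimes C$ and annihilates all higher components, so that $\id_A \otimes^\Psi \id_C$ is the identity $\infty$-morphism on $A \otimes^\Psi C$. I expect no genuine obstacle here: the real work --- the verification that $f \otimes^\Psi g$ commutes with the codifferentials and respects composition --- was already the content of Proposition \ref{propNS:compatibilityMorphisms}, whose non-symmetric computations run parallel to the symmetric ones of Proposition \ref{prop:compatibilityMorphisms}, and the present theorem merely records the resulting functoriality.
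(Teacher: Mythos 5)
Your proposal is correct and takes essentially the same approach as the paper: the paper gives no separate proof of Theorem \ref{thmNS:bifunctor}, presenting it (``Again as before, this amounts to the following statement'') as a direct repackaging of Proposition \ref{propNS:compatibilityMorphisms} together with the object assignment coming from the non-symmetric corollary to Theorem \ref{thm:mainThm}. Your explicit check that identities are preserved --- observing that for $\id_A$ and the identity $\infty$-morphism on $C$ only the arity-one component of the defining formula survives, yielding the identity coalgebra morphism --- fills in a functoriality axiom the paper leaves implicit, and the argument is sound.
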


\section{Compatibility with the Homotopy Transfer Theorem} \label{sect:sect4}

In this section, we stay in the context of binary Koszul operads. Again, we only work in the dual scenario. Recall the Homotopy Transfer Theorem, which tells us that if we are given any homotopy algebra whose underlying chain complex retracts to another chain complex, then we can induce a natural homotopy algebra structure on the second chain complex without losing any homotopical information. In particular, this is true when the starting homotopy algebra is strict. Suppose we are given two algebras over two operads related by a morphism as in the statement of the main theorem (\ref{thm:mainThm}). Consider a retraction of the underlying chain complex of the second algebra. There are two ways to induce a homotopy Lie algebra structure on the tensor product of the first chain complex and the retracted one: we can either first use the Manin morphism and then the Homotopy Transfer Theorem with the induced retraction on the tensor product, or we can first use the Homotopy Transfer Theorem with the original retraction and then apply the main theorem. We prove that the two structures obtained in this way are equal.

\subsection{Two ways to obtain a homotopy Lie algebra structure}

Suppose $\Q$ is a binary Koszul operad, let $A$ be a $\P$-algebra and let $B$ be a $\Q^{\shriek}$-algebra. Suppose further that we have a retraction of chain complexes
\begin{center}
	\begin{tikzpicture}
		\node (a) at (0,0){$B$};
		\node (b) at (2,0){$C$};
		
		\draw[->] (a)++(.3,.1)--node[above]{\mbox{\tiny{$p$}}}+(1.4,0);
		\draw[<-,yshift=-1mm] (a)++(.3,-.1)--node[below]{\mbox{\tiny{$i$}}}+(1.4,0);
		\draw[->] (a) to [out=-150,in=150,looseness=4] node[left]{\mbox{\tiny{$h$}}} (a);
	\end{tikzpicture}
\end{center}
from $B$ to $C$. Then we have the following two natural ways to obtain an $\L_\infty$-algebra structure on the tensor product $A\otimes C$.
\begin{enumerate}
	\item Pull back the $(\P\otimes\Q^{\shriek})$-algebra structure on $A\otimes B$ to a Lie algebra structure using the Manin morphism $\manin_\Psi$, then transfer this structure to an $\L_\infty$-algebra structure $\{\ell_n\}_{n\ge2}$ on $A\otimes C$ using the retraction
	\begin{center}
		\begin{tikzpicture}
			\node (a) at (0,0){$A\otimes B$};
			\node (b) at (3,0){$A\otimes C\ .$};
			
			\draw[->] (a)++(.7,.1)--node[above]{\mbox{\tiny{$1\otimes p$}}}+(1.6,0);
			\draw[<-,yshift=-1mm] (a)++(.7,-.1)--node[below]{\mbox{\tiny{$1\otimes i$}}}+(1.6,0);
			\draw[->] (a) to [out=-150,in=150,looseness=4] node[left]{\mbox{\tiny{$1\otimes h$}}} (a);
		\end{tikzpicture}
	\end{center}
	\item Use the retraction to transfer the $\Q^{\shriek}$-algebra structure on $B$ to a $\Q^{\shriek}_\infty$-algebra structure on $C$, then pull back the $(\P\otimes\Q^{\shriek}_\infty)$-algebra structure on $A\otimes C$ to an $\L_\infty$-algebra structure $\{\tilde{\ell}_n\}_{n\ge2}$ using the morphism $\genmanin_\Psi$.
\end{enumerate}

\subsection{The two structures are equal}

\begin{theorem} \label{thm:twoLinftyStrAreEqual}
	Let $\Psi:\Q\to\P$ be a morphism of binary quadratic dg operads such that $\Q$ is finitely generated and Koszul. Then the two $\L_\infty$-algebra structures obtained on $A\otimes C$ as described above are equal. Moreover, the respective $\infty$-morphisms extending the morphisms of the retraction are related by
	\[
	(1\otimes i)_\infty = 1\otimes^\Psi i_\infty\qquad\text{and}\qquad(1\otimes p)_\infty = 1\otimes^\Psi p_\infty\ .
	\]
\end{theorem}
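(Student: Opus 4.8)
The plan is to compute both transferred structures by the explicit Van der Laan formula recalled after the Homotopy Transfer Theorem and to check that the two resulting twisting morphisms $\lie^{\antishriek}\to\End_{A\otimes C}$ agree term by term. Write $\rho_A\colon\P\to\End_A$ and $\rho_B\colon\Q^{\shriek}\to\End_B$ for the two strict structures, fix a homogeneous basis $f_1,\ldots,f_k$ of the generators of $\Q$, put $e_i\coloneqq\Psi(f_i)$, and write $\mathrm{g}_i\coloneqq s^{-1}\susp_2^{-1}f_i^\vee$ for the generators of $\Q^{\shriek}$ (Lemma \ref{lemma:reprPshriek}). First I would unwind approach~(1). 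Since the bracket on $A\otimes B$ is $\manin_\Psi(b)=\sum_i e_i\otimes\mathrm{g}_i$ pulled back through $\rho_A\otimes\rho_B$, and the retraction is $(1\otimes i,1\otimes p,1\otimes h)$, the map $\vdw_{A\otimes C}$ leaves the $A$-slot untouched. Summing the Van der Laan formula over binary trees $\tau$ with $n$ leaves therefore gives
\begin{equation}\label{eq:approach1expand}
	\ell_n=\sum_{\tau}\sum_{\vec\imath}\pm\,\rho_A\big(\gamma_\P^\tau(e_{i_1},\ldots,e_{i_{n-1}})\big)\otimes\vdw_C\big(\tau,(\rho_B(\mathrm{g}_{i_v}))_v\big)\ ,
\end{equation}
in which the $A$-part is the plain operadic tree-composition in $\P$ and the $C$-part carries all the occurrences of $i,p,h$.

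The decisive observation is that the $A$-part of \eqref{eq:approach1expand} is governed by the morphism property of $\Psi$: from $\gamma_\P(\Psi\circ\Psi)=\Psi\gamma_\Q$ one gets $\gamma_\P^\tau(e_{i_1},\ldots,e_{i_{n-1}})=\Psi\big(\gamma_\Q^\tau(f_{i_1},\ldots,f_{i_{n-1}})\big)$, which is exactly Lemma \ref{lemma:technicalLemma}\,(\ref{eqn:eqn2}) read along a tree. On the other side, approach~(2) yields $\tilde{\ell}_n=(\rho_A\otimes\varphi_C)\genmanin_\Psi(\ell_n)$ with the weight-one element $\genmanin_\Psi(\ell_n)=s^{-1}\susp_n^{-1}\sum_j p_j\otimes q_j^\vee$, where $\{q_j\}$ is a basis of $\Q(n)$, $p_j=\Psi(q_j)$, and $\varphi_C$ is the transferred $\Q^{\shriek}_\infty$-structure on $C$. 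Applying $\varphi_C$ expands the cogenerator $q_j^\vee$ into binary trees through $\tilde{\Delta}_{(\Q^{\shriek})^{\antishriek}}$, and since $(\Q^{\shriek})^{\antishriek}\cong(\susp^{-1})^c\otimes\Q^\vee$ by \eqref{eq:shriek-antishriek} the binary decomposition of $q_j^\vee$ is dual to $\gamma_\Q$; hence
\[
	\varphi_C(q_j^\vee)=\sum_{\tau,\vec\imath}\langle q_j^\vee,\gamma_\Q^\tau(f_{i_v})\rangle\,\vdw_C\big(\tau,(\rho_B(\mathrm{g}_{i_v}))_v\big)\ .
\]
Summing against $\sum_j p_j\langle q_j^\vee,-\rangle=\Psi(-)$ collapses the dual basis and reproduces \eqref{eq:approach1expand} exactly, so that $\ell_n=\tilde{\ell}_n$ once the Koszul signs coming from the two operadic suspensions, from $\susp_n$ carrying the sign representation, and from the switch maps $T^\tau$ are matched --- these being precisely the signs already tabulated in the proofs of Theorem \ref{thm:mainThmDual} and Lemma \ref{lemma:technicalLemma}.

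For the second assertion I would argue identically at the level of $\infty$-morphisms. The extended map $(1\otimes i)_\infty$ is given by the transfer formula for $i_\infty$ of \cite[Sect. 10.3.5]{vallette12} applied to $(1\otimes i,1\otimes p,1\otimes h)$, which again fixes the $A$-slot and composes the brackets along trees, while $1\otimes^\Psi i_\infty$ is computed from the formula for the bifunctor in Proposition \ref{prop:compatibilityMorphisms} with $f=\id_A$ and $g=i_\infty$. Feeding in $\genmanin_\Psi$ and using once more $\gamma_\P^\tau(e_{i_v})=\Psi(\gamma_\Q^\tau(f_{i_v}))$ together with the dual-basis collapse identifies the two $\infty$-morphisms; the argument for $p$ is the mirror image, with $i$ on the leaves replaced by $p$ at the root.

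The main obstacle is bookkeeping rather than conceptual: one must check that the Koszul signs produced on the approach-(1) side by $\vdw_{A\otimes C}$ and by composition in $\P\otimes\Q^{\shriek}$ coincide with those produced on the approach-(2) side by $\genmanin_\Psi$, by $\varphi_C$, and by the identification $(\Q^{\shriek})^{\antishriek}\cong(\susp^{-1})^c\otimes\Q^\vee$. I expect each of these to reduce to a sign computation already carried out in Section \ref{sect:sect2}, so that the proof amounts to a careful superposition of the Van der Laan formula with the explicit description of $\genmanin_\Psi$.
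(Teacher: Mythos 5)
Your proposal is correct and takes essentially the same route as the paper's proof: both expand the two structures via the Van der Laan formula as sums over binary rooted trees, use that the retraction acts as the identity on $A$ to split $\vdw_{A\otimes C}$ into plain operadic composition on the $A$-side and $\vdw_C$ on the $C$-side, and transport $\Psi$ along trees via $\manin_\Psi=(\Psi\otimes1)\manin_\Q$. Your ``dual-basis collapse'' is exactly the paper's key identity $\sum_i q_i\otimes\tilde{\Delta}^\tau(q_i^\vee)=\sum_{\vec\imath}\tilde{\gamma}_\Q(\tau(f_{i_1},\ldots,f_{i_{n-1}}))\otimes\tau(f_{i_1},\ldots,f_{i_{n-1}})^\vee$ (``writing $\gamma_\Q$ on $\T(F)^\tau$ in two ways''), and the $\infty$-morphism statement is handled in both proofs by the explicit formul{\ae} of \cite[Sect. 10.3.5--6]{vallette12} together with the triviality of the retraction on $A$.
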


\begin{proof}
	We begin by establishing a bit of notation. We denote by $\brt$ the set of binary rooted trees, and by $\brt_n$ the subset given by trees of arity $n$. We fix a basis $f_1,\ldots,f_k$ of $F$ and let $e_i\coloneqq\Psi(f_i)$. For all $\tau\in\brt$, we fix once and for all a numbering of the vertices. For $\tau\in\brt_{n+1}$ and binary operations $\alpha_1,\ldots,\alpha_n$, we denote by $\tau(\alpha_1,\ldots,\alpha_n)$ the tree with the $\alpha_i$ inserted at the vertex $i$. Let $\P = \P(E,R)$ and $\Q = \P(F,S)$ be the quadratic presentations of $\P$ and $\Q$. We write
	\[
	\tilde{\gamma}_\P:\T(E)\longrightarrow\P = \T(E)/(R)
	\]
	for the composition map (i.e. the quotient map), and similarly for $\Q$. Notice that this notation is consistent, because we can see $\T(E)$ as a subspace of $\T(\P)$ since $E = \P(2)$, and there the map $\tilde{\gamma}_\P$ is exactly the usual operadic composition map. We represent the $\P$-algebra structure on $A$ by the degree $0$ map of operads
	\[
	\rho_A:\P\longrightarrow\End_A
	\]
	and similarly for $A'$, while all other algebra structures are given by twisting morphisms and denoted by $\varphi_B$, $\varphi_{A\otimes B}$ and so on. Notice that as $B$ is also a strict $\Q^{\shriek}$ algebra, and that $\Q^{\shriek}$ is a binary operad, we have that
	\[
	\varphi_B\in\Tw((\Q^{\shriek})^{\antishriek},\End_B)\cong\Tw((\susp^{-1})^c\otimes\Q^\vee,\End_B)
	\]
	splits through $(\Q^{\shriek})^{\antishriek}(2)\cong\susp_2^{-1}F^\vee$. Finally, let
	\[
	\vdw_C:\T^c(s\End_B)\longrightarrow\End_C
	\]
	be the Van der Laan morphism associated to the retraction from $B$ to $C$, and similarly denote by $\vdw_{A\otimes C}$ the Van der Laan morphism associated to the induced retraction from $A\otimes B$ to $A\otimes C$.
	
	\medskip
	
	We start by unwinding the definitions to give an explicit formulation for the second structure. For $n\ge2$, fix a basis $q_1,\ldots,q_{m(n)}$ of $\Q(n)$. As we have $(\Q^{\shriek})^{\antishriek} \cong (\susp^{-1})^c\otimes\Q^\vee$, this also gives us the basis $\{\susp^{-1}_nq_i^\vee\}_i$ of $(\Q^{\shriek})^{\antishriek}(n)$. By the Homotopy Transfer Theorem, the $\Q^{\shriek}_\infty$-algebra structure on $C$ is given by the twisting morphism
	\[
	\varphi_C\in\Tw((\Q^{\shriek})^{\antishriek},\End_C)
	\]
	equal to the composite
	\[
	(\Q^{\shriek})^{\antishriek}\xrightarrow{\tilde{\Delta}_{(\Q^{\shriek})^{\antishriek}}}\T^c\left(\overline{(\Q^{\shriek})^{\antishriek}}\right)\xrightarrow{\T^c(s\varphi_B)}\T^c(s\End_B)\xrightarrow{\vdw_C}\End_C\ .
	\]
	Since the chain complex $B$ has the structure of a strict $\Q^{\shriek}$-algebra, and since $\Q^{\shriek}$ is a binary operad, the second arrow gives zero on all non-binary trees. Therefore, we can rewrite the composite above as
	\[
	(\Q^{\shriek})^{\antishriek}\xrightarrow{\tilde{\Delta}_{(\Q^{\shriek})^{\antishriek}}^{\text{bin}}}\T^c(\susp_2^{-1}F^\vee)\xrightarrow{\T^c(s\varphi_B)}\T^c(s\End_B)\xrightarrow{\vdw_C}\End_C\ ,
	\]
	where $\tilde{\Delta}_{(\Q^{\shriek})^{\antishriek}}^{\text{bin}}$ is the composite of $\tilde{\Delta}_{(\Q^{\shriek})^{\antishriek}}$ with the projection on binary trees. Notice that it is dual to the composition map
	\[
	\tilde{\gamma}_{\susp\otimes\Q}:\T(\susp_2F)\longrightarrow\susp\otimes\Q\ .
	\]
	Putting all of this together, we get
	\[\varphi_C = \vdw_C\T^c(s\varphi_B)\tilde{\Delta}_{(\Q^{\shriek})^{\antishriek}}^{\text{bin}}\ .
	\]
	Using this, we obtain for the second $\L_\infty$-algebra structure:
	\begin{align*}
		\tilde{\ell}_n \coloneqq&\ (\rho_A\Psi\otimes\varphi_C)\left(\sum_{i=1}^{m(n)}(-1)^{|q_i|(n-1)}q_i\otimes \susp_n^{-1}q_i^\vee\right)\\
		=&\ \big(\rho_A\Psi\otimes(\vdw_C\T^c(s\varphi_B))\big)\left(\sum_{\tau\in\brt_n}\sum_{i=1}^{m(n)}(-1)^{|q_i|(n-1)}q_i\otimes\tilde{\Delta}^\tau(\susp_n^{-1}q_i^\vee)\right)\\
		=&\ \big(\rho_A\Psi\otimes(\vdw_C\T^c(s\varphi_B))\big)\left(\sum_{\tau\in\brt_n}T\left(\Delta^\tau(\susp_n^{-1})\sum_{i=1}^{m(n)}q_i\otimes\tilde{\Delta}^\tau(q_i^\vee)\right)\right)\ ,
	\end{align*}
	where the correct switches are left implicit. In the second line, the map $\tilde{\Delta}^\tau$ is the dual to the monadic composition map $\T(\Q)\to\Q$ restricted to the subspace $\T(\Q)^\tau$ with underlying tree $\tau$.
	
	\medskip
	
	Now we make explicit the first $\L_\infty$-algebra structure. The Homotopy Transfer Theorem gives
	\[
	\ell_n \coloneqq \vdw_{A\otimes C}\T^c(s\varphi_{A\otimes B})\tilde{\Delta}_{\lie^{\antishriek}}^{\text{bin}}(\susp_n^{-1}\mu_n^\vee)\ .
	\]
	Since the retraction of $A\otimes B$ to $A\otimes C$ is given by tensoring the maps of the retraction of $B$ to $C$ with the identity on $A$, the Van der Laan map $\vdw_{A\otimes C}$ can be computed by first using $\Phi$ to double the tree in $\T^c(s\End_{A\otimes B})$, obtaining an element of $\T^c(\End_A)\otimes\T^c(s\End_B)$, then composing the operations at the vertices of the first tree and applying the Van der Laan morphism $\vdw_C$ to the second tree. Writing this out, we have
	\[
	\vdw_{A\otimes C} = (\gamma_{\End_A}\otimes\vdw_C)\Phi\ .
	\]
	Now, the map $\T^c(s\varphi_{A\otimes B})$ is given by applying $\manin_\Psi$ at each vertex, and then applying $\rho_A\otimes s\varphi_B$ at each vertex. So it is straightforward to see that we have
	\[
	\Phi\T^c(s\varphi_{A\otimes B}) = \big(\T^c(\rho_A)\otimes\T^c(s\varphi_B)\big)\Phi\T^c(\manin_\Psi)\ ,
	\]
	that is to say, if we apply $\T^c(s\varphi_{A\otimes B})$ and then double the tree, we obtain the same thing that we get by applying $\manin_\Psi$ at each vertex, then immediately doubling the tree, and only then applying $\rho_A$ to the vertices of the first copy of the tree and $s\varphi_C$ to the vertices of the second one. Now we use the fact that $\manin_\Psi = (\Psi\otimes1)\manin_\Q$ and commute the $(\Psi\otimes1)$ over the map $\Phi$ to obtain
	\[
	\Phi\T^c(s\varphi_{A\otimes B}) = \big(\T^c(\rho_A\Psi)\otimes\T^c(s\varphi_B)\big)\Phi\T^c(\manin_\Q)\ .
	\]
	Putting all of this together and writing $\tilde{\Delta}_{\lie^{\antishriek}}^\mathrm{bin}$ as a sum over all binary rooted trees, we obtain
	\begin{align*}
		\ell_n =&\ (\tilde{\gamma}_{\End_A}\otimes\vdw_C)(\T^c(\rho_A\Psi)\otimes\T^c(s\varphi_B))\Phi\T^c(\manin_\Q)\left(\sum_{\tau\in\brt_n}\tilde{\Delta}^\tau(\susp_n^{-1}\mu_n^\vee)\right)\\
		=&\ (\rho_A\Psi\tilde{\gamma}_\Q\otimes\vdw_C\T^c(s\varphi_B))\Phi\Bigg(\sum_{\tau\in\brt_n}T\Bigg(\tilde{\Delta}^\tau(\susp_n^{-1})\sum_{1\le i_1,\ldots,i_{n-1}\le k}\tau(f_{i_1}\otimes f_{i_1}^\vee,\ldots,f_{i_{n-1}}\otimes f_{i_{n-1}}^\vee)\Bigg)\Bigg)\\
		=&\ (\rho_A\Psi\otimes\vdw_C\T^c(s\varphi_B))(\bigstar)\ ,
	\end{align*}
	where
	\[
	(\bigstar) \coloneqq \sum_{\tau\in\brt_n}T\left(\tilde{\Delta}^\tau(\susp_n^{-1})\sum_{1\le i_1,\ldots,i_{n-1}\le k}(-1)^\epsilon\tilde{\gamma}_\Q(\tau(f_{i_1},\ldots,f_{i_{n-1}}))\otimes\tau(f_{i_1}^\vee,\ldots,f_{i_{n-1}}^\vee)\right)\ .
	\]
	The sign $(-1)^\epsilon$ comes from the map $\Phi$ doubling a tree and separating operations (as we have to switch some $f_i$'s and $f_i^\vee$'s). Now, noticing that that sign is exactly the one coming from dualization, we have
	\begin{align*}
		(\bigstar) =&\ \sum_{\tau\in\brt_n}T\left(\tilde{\Delta}^\tau(\susp_n^{-1})\sum_{1\le i_1,\ldots,i_{n-1}\le k}(-1)^\epsilon\tilde{\gamma}_\Q(\tau(f_{i_1},\ldots,f_{i_{n-1}}))\otimes\tau(f_{i_1}^\vee,\ldots,f_{i_{n-1}}^\vee)\right)\\
		=&\ \sum_{\tau\in\brt_n}T\left(\tilde{\Delta}^\tau(\susp_n^{-1})\sum_{1\le i_1,\ldots,i_{n-1}\le k}\tilde{\gamma}_\Q(\tau(f_{i_1},\ldots,f_{i_{n-1}}))\otimes\tau(f_{i_1},\ldots,f_{i_{n-1}})^\vee\right)\ .
	\end{align*}
	The equality of the two structures comes from the fact that
	\[
	\sum_{i=1}^{m(n)}q_i\otimes\tilde{\Delta}^\tau(q_i^\vee) = \sum_{1\le i_1,\ldots,i_{n-1}\le k}\tilde{\gamma}_\Q(\tau(f_{i_1},\ldots,f_{i_{n-1}}))\otimes\tau(f_{i_1},\ldots,f_{i_n})^\vee
	\]
	for all $\tau\in\brt_n$. This can be seen by writing the composition map $\gamma_\Q$ on $\T(F)^\tau$ in two ways. The formul{\ae} for the $\infty$-morphisms extending the maps in the retraction follow immediately from the explicit expressions for them given in \cite[Sect. 10.3.5--6]{vallette12} and the fact that the maps in the retraction act by the identity on $A$.
\end{proof}

\subsection{The non-symmetric case}

In the same situation as above, but with non-symmetric operads, the results take the following form.

\begin{theorem}
	The two $\A_\infty$-algebra structures on $A\otimes C$ obtained by applying the Homotopy Transfer Theorem in two different ways are equal, and the respective $\infty$-morphisms extending the morphisms of the retraction are related by
	\[
	(1\otimes i)_\infty = 1\otimes^\Psi i_\infty\qquad\text{and}\qquad(1\otimes p)_\infty = 1\otimes^\Psi p_\infty\ .
	\]
\end{theorem}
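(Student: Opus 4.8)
The plan is to mirror the proof of Theorem~\ref{thm:twoLinftyStrAreEqual} almost verbatim, substituting the non-symmetric data throughout: the operad $\L_\infty$ is replaced by $\A_\infty$ with generators $a_n = s^{-1}\susp_n^{-1}m_n^\vee$, the black-product unit $\lie$ by $\as$, the Manin morphisms $\manin_\Psi,\manin_\Q$ by $\maninns_\Psi,\maninns_\Q$, the operad morphism $\genmanin_\Psi$ by $\genmaninns_\Psi$, and the set $\brt_n$ of binary rooted trees by the set of \emph{planar} binary rooted trees of arity $n$. First I would fix a basis $f_1,\dots,f_k$ of $F$, set $e_i\coloneqq\Psi(f_i)$, choose once and for all a numbering of the vertices of each planar binary tree, and introduce the Van der Laan morphisms $\vdw_C\colon\T^c(s\End_B)\to\End_C$ and $\vdw_{A\otimes C}$ attached to the two retractions, exactly as in the symmetric case.

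Next I would unwind both structures. Since $\Q^{\shriek}$ is a binary ns operad, the transferred twisting morphism $\varphi_C\in\Tw((\Q^{\shriek})^{\antishriek},\End_C)$ again factors as $\vdw_C\,\T^c(s\varphi_B)\,\tilde\Delta^{\mathrm{bin}}$, where $\tilde\Delta^{\mathrm{bin}}$ is the binarised decomposition, dual to $\tilde\gamma_{\susp\otimes\Q}\colon\T(\susp_2F)\to\susp\otimes\Q$; pulling back along $\genmaninns_\Psi$ then yields an explicit formula for the operations of the second structure as a sum over planar binary trees $\tau$. For the first structure I would use the two structural identities that do not rely on symmetry: the factorisation $\vdw_{A\otimes C}=(\gamma_{\End_A}\otimes\vdw_C)\Phi$ of the induced Van der Laan map, and the commutation $\Phi\,\T^c(s\varphi_{A\otimes B})=(\T^c(\rho_A)\otimes\T^c(s\varphi_B))\,\Phi\,\T^c(\maninns_\Psi)$, followed by $\maninns_\Psi=(\Psi\otimes1)\maninns_\Q$ to move $(\Psi\otimes1)$ across $\Phi$. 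This presents the first structure in the identical form $(\rho_A\Psi\otimes\vdw_C\T^c(s\varphi_B))(\bigstar)$, with $(\bigstar)$ a sum over the same planar binary trees.

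Finally the two formulas coincide because, for every planar binary tree $\tau$ of arity $n$, one has the identity
\[
\sum_{i}q_i\otimes\tilde\Delta^\tau(q_i^\vee)=\sum_{1\le i_1,\dots,i_{n-1}\le k}\tilde\gamma_\Q(\tau(f_{i_1},\dots,f_{i_{n-1}}))\otimes\tau(f_{i_1},\dots,f_{i_{n-1}})^\vee,
\]
obtained by expressing the composition map $\tilde\gamma_\Q$ on $\T(F)^\tau$ in two ways; the relations $(1\otimes i)_\infty=1\otimes^\Psi i_\infty$ and $(1\otimes p)_\infty=1\otimes^\Psi p_\infty$ then follow from the explicit Van der Laan formulas for $i_\infty,p_\infty$ in \cite[Sect.~10.3.5--6]{vallette12}, since the retraction maps act by the identity on $A$. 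The only place demanding genuine attention is the sign bookkeeping coming from the doubling map $\Phi$ and the operadic desuspension $\susp_n^{-1}$; I expect this to be the main—though comparatively light—obstacle, precisely because in the ns setting the symmetric-group sign representation carried by $\susp_n$ in Theorem~\ref{thm:twoLinftyStrAreEqual} is absent, so that the surviving Koszul signs produced by transposing the $f_i$ with the $f_i^\vee$ are exactly cancelled by the dualisation sign, just as in the symmetric computation.
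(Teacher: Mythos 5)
Your proposal is correct and is essentially the paper's own (implicit) proof: the paper establishes the symmetric case (Theorem \ref{thm:twoLinftyStrAreEqual}) in detail and states the non-symmetric version without further argument, precisely because the proof carries over verbatim under the substitutions you list --- planar binary trees for $\brt_n$, $\as$ for $\lie$, $\maninns$ and $\genmaninns$ for their symmetric counterparts --- with none of the structural identities (the factorisation of $\vdw_{A\otimes C}$, the commutation with $\Phi$, the two expressions for $\tilde{\gamma}_\Q$ on $\T(F)^\tau$) depending on the symmetric group actions. Your observation that the sign bookkeeping is if anything lighter in the ns setting, since no sign representation of $\S_n$ intervenes while the Koszul signs from $\Phi$ still cancel against the dualisation signs, is also accurate.
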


\section{Maurer--Cartan elements} \label{sect:MCel}

Theorem \ref{thm:mainThmDual} gives us a way to endow the hom space of certain coalgebras and algebras with an $\L_\infty$-algebra structure. It is then a natural question to ask what the Maurer--Cartan elements of the resulting $\L_\infty$-algebra represent. The interest into this question is given by the following ``philosophical'' principle attributed to Deligne and others, and made into a formal theorem using $\infty$-categories by Pridham \cite{pridham10} and Lurie \cite{lurie14}:

\medskip

\begin{center}
	\begin{minipage}{0.8\textwidth}
		\begin{center}
			``Every deformation problem in characteristic $0$ is encoded into the Maurer--Cartan elements of an $\L_\infty$-algebra.''
		\end{center}
	\end{minipage}
\end{center}

\medskip

In this section, we interpret the set of Maurer--Cartan elements of the homotopy Lie algebra obtained through the main theorem as the set of morphism between certain algebras. This will allow us to construct the deformation complex for morphisms of algebras over an operad in the next section. The dual setting, which will allow us to represent the Deligne--Hinich--Getzler $\infty$-groupoid for dg Lie algebras, is also treated.

\subsection{Maurer--Cartan elements of $\L_\infty$-algebras}

Let $(\g,d,\{\ell_n\}_{n\ge2})$ be an $\L_\infty$-algebra. Then, whenever it is defined, the \emph{Maurer--Cartan equation} is
\[
dx + \sum_{n\ge2}\frac{1}{n!}\ell_n(x,\ldots,x) = 0
\]
for $|x| = -1$. In particular, if $\g$ is actually a dg Lie algebra (i.e. $\ell_n = 0$ for all $n\ge 3$), then we recover the usual Maurer--Cartan equation
\[
dx + \frac{1}{2}[x,x] = 0\ .
\]

\begin{definition}
	The set of Maurer--Cartan elements of an $\L_\infty$-algebra $\g$ is denoted by $\MC(\g)$.
\end{definition}

\subsection{Twisting morphisms and bar--cobar constructions}

This part of the section is a slight variation of the material that can be found in \cite[Sect. 11.1--3]{vallette12}, where we consider general coalgebras instead of just conilpotent ones, which forces us to work with the complete algebras of Appendix \ref{sect:appendixCompletePalg} instead of general ones. Notice that we need the fact that the dg operad $\P$ is reduced in order to have a well-defined notion of complete $\P$-algebras. The various results given in the book \cite{vallette12} keep holding in this situation with only slight changes. For example, a morphism from a free complete $\P$-algebra to a \emph{complete} $\P$-algebra is completely determined by the image of the generators.

\medskip

Let $\C$ be a dg cooperad and let $\P$ be a dg operad. Suppose $\alpha:\C\to\P$ is a twisting morphism. Let $A$ be a complete $\P$-algebra and let $D$ be a $\C$-coalgebra. Then a \emph{twisting morphism with respect to $\alpha$} is a linear map $\varphi:D\to A$ of degree $0$ satisfying the ``Maurer--Cartan equation''
\[
\partial(\varphi) + \star_\alpha(\varphi) = 0\ ,
\]
where $\star_\alpha$ is the unary operator of degree $0$ defined by
\[
\star_\alpha(\varphi) \coloneqq \left(D\xrightarrow{\Delta_D}\C\hat{\circ} D\xrightarrow{\alpha\circ\varphi}\P\hat{\circ} A\xrightarrow{\gamma_A}A\right)\ .
\]
Notice that there is a passage from invariants to coinvariants which is left implicit. It is actuated using the isomorphism described in the introduction. We denote the set of solutions of this equation by $\Tw_\alpha(D,A)$.

\medskip

We can use $\alpha\in\Tw(\C,\P)$ to construct a functor from $\C$-coalgebras to complete $\P$-algebras, which we call the \emph{complete cobar construction} $\widehat{\Omega}_\alpha$ for obvious reasons. It takes a $\C$-coalgebra $D$ and gives back the complete $\P$-algebra
\[
\widehat{\Omega}_\alpha D\coloneqq\big(\widehat{\P}(D),d\coloneqq d_1+d_2\big)\ ,
\]
where $-d_1$ is the unique derivation extending the differentials of $\P$ and $D$, and $-d_2$ is the unique derivation extending the degree $-1$ map
\[
D\xrightarrow{\Delta_D}\C\hat{\circ}D\xrightarrow{\alpha\hat{\circ}\id_D}\P\hat{\circ}D = \widehat{\P}(D)\ .
\]

\begin{proposition}
	There is a natural bijection
	\[
	\hom_{\mathsf{dg}\widehat{\P}\text{-}\mathsf{alg}}(\widehat{\Omega}_\alpha D,A)\cong\Tw_\alpha(D,A)\ .
	\]
\end{proposition}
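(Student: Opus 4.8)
The plan is to deduce the bijection from the universal property of the free complete $\P$-algebra, exactly as in the classical (conilpotent) case treated in \cite[Sect.~11.1--3]{vallette12}, while taking care that all the a priori infinite sums that appear converge in the complete setting.

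First I would forget the differentials. Since $\widehat{\Omega}_\alpha D = (\widehat{\P}(D), d_1 + d_2)$ has as underlying graded object the free complete $\P$-algebra $\widehat{\P}(D)$, the universal property recalled in the text shows that a morphism of graded $\widehat{\P}$-algebras $\Phi\colon\widehat{\P}(D)\to A$ is determined uniquely by its restriction $\varphi\coloneqq\Phi|_D$ to the generators, which may be an arbitrary degree $0$ linear map $\varphi\colon D\to A$. This already yields a bijection between graded $\widehat{\P}$-algebra morphisms $\widehat{\P}(D)\to A$ and degree $0$ maps $D\to A$; it then remains only to identify, on each side, the condition of being compatible with the differentials.

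Next I would show that $\Phi$ is a chain map if and only if $\varphi$ is a twisting morphism. The key point is that the degree $-1$ map $d_A\Phi - \Phi d$ is a derivation relative to the algebra morphism $\Phi$: this holds because $d_A$ is a derivation of $A$, because $d = d_1 + d_2$ is a derivation of $\widehat{\P}(D)$, and because $\Phi$ commutes with the operadic composition maps. A $\Phi$-derivation out of a free complete $\P$-algebra is determined by its values on the generators, so $d_A\Phi = \Phi d$ holds if and only if $(d_A\Phi - \Phi d)|_D = 0$. Here completeness of $A$ is exactly what guarantees that the infinite sums hidden in $\Phi$ and in $\star_\alpha(\varphi)$ converge in the filtration topology, which is the whole reason for working with complete algebras and with $\hat{\circ}$.

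Finally I would evaluate this identity on a generator $x\in D$. On generators $\Phi = \varphi$, while the explicit description of the cobar differential gives, up to signs, $d_1 x = -d_D x$ and $d_2 x = -(\alpha\,\hat{\circ}\,\id_D)\Delta_D(x)$. Applying $\Phi$ to the latter and using that $\Phi$ is an algebra morphism turns it into $\gamma_A(\alpha\,\hat{\circ}\,\varphi)\Delta_D(x) = \star_\alpha(\varphi)(x)$, by the very definition of $\star_\alpha$. Collecting the three contributions, the condition $(d_A\Phi - \Phi d)(x) = 0$ becomes precisely $\partial(\varphi)(x) + \star_\alpha(\varphi)(x) = 0$, that is, $\varphi\in\Tw_\alpha(D,A)$. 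Naturality in $D$ (contravariant, through $\Delta_D$) and in $A$ (covariant, through postcomposition) is then automatic, since the correspondence is given by restriction to generators. The main obstacle I anticipate is purely a matter of sign bookkeeping: tracking the Koszul signs in $d_1$, $d_2$ and in $\partial$, together with the implicit passage from invariants to coinvariants in $\gamma_A$, so that the resulting equation matches $\partial(\varphi) + \star_\alpha(\varphi) = 0$ on the nose rather than up to a sign; the minus signs attached to $d_1$ and $d_2$ in the definition of $\widehat{\Omega}_\alpha$ are precisely what make this work out.
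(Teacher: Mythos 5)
Your proposal is correct and takes essentially the same route as the paper: the paper's proof simply observes that the argument of Loday--Vallette (Prop.\ 11.3.1) ``goes through basically unchanged,'' with completeness of $A$ being precisely what makes a morphism out of $\widehat{\Omega}_\alpha D$ equivalent to its restriction to the generators $D$ --- which is exactly the universal-property argument you spell out. The only caveat is the sign bookkeeping you yourself flag at the end, which is inherited from the paper's own sign conventions for $d_1$ and $d_2$ and does not affect the substance of the argument.
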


\begin{proof}
	The proof given in the book \cite[Prop. 11.3.1]{vallette12} goes through basically unchanged. Notice that it is crucial that $A$ is complete, since for example we need the fact that a morphism $\widehat{\Omega}_\alpha(D)\to A$ is equivalent to its restriction to $D$, and if $A$ is not complete we can only say that such a morphism gives a map $D\to A$, but not go in the other direction.
\end{proof}

Recall that there is a canonical twisting morphism $\pi:\bar\Q\to\Q$ associated to the counit of the bar--cobar adjunction, see \cite[Sect. 6.5.4]{vallette12}. Given a morphism $\Psi:\Q\to\P$ of dg operads, we can pull it back by $\pi$ to obtain the twisting morphism
\[
\psi \coloneqq \pi^*\Psi:\bar\Q\longrightarrow\P\ ,
\]
and thus the complete cobar functor
\[
\widehat{\Omega}_\psi:\mathsf{dg}\bar\Q\text{-}\mathsf{cog}\longrightarrow\mathsf{dg}\widehat{\P}\text{-}\mathsf{alg}\ .
\]

\subsection{Maurer--Cartan elements}

Let $\Psi:\Q\to\P$ be a morphism of dg operads, let $D$ be a $\bar(\susp\otimes\Q)$-coalgebra, and let $(A,F_\bullet A)$ be a filtered $\P$-algebra as defined in Appendix \ref{sect:appendixCompletePalg}. Then $s^{-1}D$ is canonically a $\bar\Q$-coalgebra. We can then compare the Maurer--Cartan equation in $\hom^\Psi(D,A)$ with the equation defining $\Tw_\psi$. We have a natural bijection
\[
\hom(D,A)\longrightarrow\hom(s^{-1}D,A)
\]
given by sending $\varphi\in\hom(D,A)$ to the linear map $s\varphi\in\hom(s^{-1}D,A)$ defined by
\[
s\varphi(s^{-1}x)\coloneqq (-1)^{|\varphi| + 1}\varphi(x)
\]
for any $x\in D$. This assignment anticommutes with the differentials.

\begin{theorem}
	Let $\varphi\in\hom(D,A)$ be of degree $-1$. Then we have
	\[
	\partial(s\varphi)+\star_\psi(s\varphi) = 0\qquad\Leftrightarrow\qquad d(\varphi) + \sum_{n\ge2}\frac{1}{n!}\ell_n(\varphi,\ldots,\varphi) = 0\ .
	\]
	In other words, there is a natural bijection
	\[
	\Tw_\psi(D,A)\cong\MC(\hom^\Psi(s^{-1}D,A))\ .
	\]
\end{theorem}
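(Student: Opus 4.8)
The plan is to compare the two sides of the claimed equivalence term by term, separating the linear part from the higher part. Since the suspension bijection $\varphi\mapsto s\varphi$ anticommutes with the differentials, the linear term $\partial(s\varphi)$ on the left corresponds to $d(\varphi)$ on the right; this is immediate once one keeps track of the sign $(-1)^{|\varphi|+1}$ in the definition of $s\varphi$ together with the sign appearing in the internal-hom differential. The entire content therefore lies in showing that the quadratic-and-higher operator $\star_\psi(s\varphi)$ matches $\sum_{n\ge2}\tfrac{1}{n!}\ell_n(\varphi,\ldots,\varphi)$, where the $\ell_n$ are the brackets of the convolution $\L_\infty$-algebra $\hom^\Psi$ obtained by pulling back along $\genmaninDual_\Psi$.

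First I would unwind $\star_\psi(s\varphi)$ explicitly. By definition it is the composite $s^{-1}D\xrightarrow{\Delta}\bar\Q\hat\circ s^{-1}D\xrightarrow{\psi\circ s\varphi}\P\hat\circ A\xrightarrow{\gamma_A}A$, where $\psi=\Psi\pi$ and $\pi:\bar\Q\to\Q$ is the canonical twisting morphism. Since $\pi$ is the projection of $\bar\Q=\T^c(s\overline\Q)$ onto the weight-one corollas followed by the desuspension $s^{-1}$, the twisting morphism $\psi$ annihilates every part of $\bar\Q$ except the arity-$n$ corolla $s\overline\Q(n)$, on which it equals $\Psi\circ s^{-1}$. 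Thus the arity-$n$ summand of $\star_\psi(s\varphi)$ is obtained by taking the corolla component of the decomposition of $s^{-1}D$, desuspending, applying $\Psi$ on the operadic factor and $(s\varphi)^{\otimes n}$ on the coalgebra factor, and composing with $\gamma_A$.

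Next I would identify this summand with the $n$-th bracket. Using the canonical isomorphism $\susp^c\otimes\bar\Q\cong\bar(\susp\otimes\Q)$ together with the fact that $s^{-1}D$ is the $\bar\Q$-coalgebra associated to the $\bar(\susp\otimes\Q)$-coalgebra $D$, the corolla component of the $\bar\Q$-decomposition of $s^{-1}D$ is carried precisely onto the weight-one component $\mathrm{proj}^{(1)}:\bar(\susp\otimes\Q)(n)\to s\susp_n\Q(n)$ of the $\bar(\susp\otimes\Q)$-decomposition of $D$ used to define $\genmaninDual_\Psi(\ell_n)$. Comparing with the explicit value $\genmaninDual_\Psi(\ell_n)(s\susp_nq)=(-1)^{n-1+\frac{n(n-1)}{2}}\Psi(q)$ and with the convolution action $\gamma_{\hom(D,A)}$ of Proposition \ref{prop:convolutionAlg}, I would check that the arity-$n$ piece of $\star_\psi(s\varphi)$ equals, up to the overall sign coming from $s$, the image under $s$ of $\tfrac{1}{n!}\ell_n(\varphi,\ldots,\varphi)$. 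The factor $\tfrac{1}{n!}$ is not an accident: the operator $\star_\psi$ passes from invariants to coinvariants via the normalized isomorphism $v\mapsto\frac{1}{|\S_n|}[v]$ of the introduction, whereas the convolution structure defining the $\ell_n$ uses the unnormalized inclusion-then-quotient map, so that the two conventions differ exactly by $\frac{1}{n!}=\frac{1}{|\S_n|}$, which reappears as the Maurer--Cartan coefficient.

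The main obstacle will be the sign bookkeeping. Three independent sources of signs must be reconciled: the operadic-suspension sign $(-1)^{n-1+\frac{n(n-1)}{2}}$ built into $\genmaninDual_\Psi(\ell_n)$, the Koszul signs produced when the formal symbols $s$ and $\susp_n$ are commuted past the odd-degree map $\varphi$ and past each other under the identification $\susp^c\otimes\bar\Q\cong\bar(\susp\otimes\Q)$, and the sign $(-1)^{|\varphi|+1}$ in $s\varphi$ together with the sign hidden in the desuspension $s^{-1}$ inside $\pi$. One must verify that, arity by arity, these combine into a single overall sign, which is harmless since $s$ is a bijection and we only need the vanishing of one equation to be equivalent to the vanishing of the other. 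The computation is analogous in spirit to the sign verifications already carried out in the proof of Theorem \ref{thm:mainThmDual}. Once the equation-level equivalence is established, the claimed natural bijection between $\Tw_\psi$ and the Maurer--Cartan set follows immediately from the suspension bijection $\varphi\mapsto s\varphi$.
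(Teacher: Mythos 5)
Your proposal follows essentially the same route as the paper's own proof: reduce to the weight-one (corolla) part of $\bar\Q$ via $\psi=\Psi\pi$, compare the resulting arity-$n$ piece of $\star_\psi(s\varphi)$ with $\genmaninDual_\Psi(\ell_n)$ acting through the convolution structure, explain the factor $\tfrac{1}{n!}$ exactly as the paper does (normalized invariants-to-coinvariants isomorphism in $\star_\psi$ versus the unnormalized identification in $\gamma_{\hom(D,A)}$), and conclude from a uniform overall sign, which the paper's computation confirms is $-1$ in every arity and for the linear term alike. The only work left implicit in your plan is the explicit Sweedler-notation sign check, which is precisely the routine verification the paper carries out.
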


\begin{remark}
	Notice that, since $A$ is filtered, we can define a descending filtration by
	\[
	F_n\hom^\Psi(D,A)\coloneqq\hom(D,F_nA)\subseteq\hom(D,A)\ ,
	\]
	which makes $\hom^\Psi(D,A)$ into a filtered $\L_\infty$-algebra. Then, by Proposition \ref{prop:filtered implies complete} we have that $\hom^\Psi(D,A)$ is complete, so that it makes sense to speak of its Maurer--Cartan elements.
	
	\medskip
	
	Instead of taking $A$ filtered, one can also consider arbitrary $\P$-algebras but require the coalgebra $D$ to be conilpotent. This is equivalent to the coradical filtration $F^n_\C D$ of $D$ being exhaustive, see \cite[Sect. 5.8.4]{vallette12}. Then the $\L_\infty$-algebra $\hom^\Psi(D,A)$ is again filtered by
	\[
	F_n\hom^\Psi(D,A)\coloneqq\{f\in\hom(D,A)\mid F^n_\C D\subseteq\ker(f)\}\ ,
	\]
	and thus complete.
	
	\medskip
	
	We will find ourselves in the first situation when speaking of the Deligne--Hinich--Getzler $\infty$-groupoid in the application presented in Section \ref{subsection:cosimplicial}, while the second situation will make its appearance when defining the deformation complex of morphisms of $\P$-algebras in Section \ref{subsection:defCplx}.
\end{remark}

\begin{proof}
	As already remarked above, we have
	\[
	s(d\varphi) = -\partial(s\varphi)\ .
	\]
	Fix $n\ge2$, we will compare $\ell_n(\varphi,\ldots,\varphi)$ with $\star_\psi^{(n)}(s\varphi)$, the part of $\star_\psi(s\varphi)$ passing through
	\[
	\bar\Q(n)\otimes_{\S_n} D^{\otimes n}\subset\bar\Q\hat{\circ} D\ .
	\]
	We start by computing
	\begin{align*}
		\frac{1}{n!}\ell_n(\varphi,\ldots,\varphi) =&\ \frac{1}{n!}\gamma_{\hom(D,A)}\big(\genmaninDual_\Psi(\ell_n)\otimes\varphi^{\otimes n}\big)\\
		=&\ \gamma_A\big(\genmaninDual_\Psi(\ell_n)\circ\varphi\big)\Delta_D(n)\\
		=&\ \gamma_A\big(\big((s^{-1}\susp_n^{-1}\Psi(n))\mathrm{proj}^{(1)}\big)\circ\varphi\big)\Delta_D(n)\\
		=&\ \gamma_A\big((s^{-1}\susp_n^{-1}\Psi(n))\circ\varphi\big)\overline{\Delta}_D(n)
	\end{align*}
	Starting from the second line we left implicit the passage from invariants to coinvariants given by the usual isomorphism. The factor $n!$ come from the fact that in the definition of the algebraic structure on $\hom(D,A)$ we used another identification of invariants with coinvariants. In the last line, $\overline{\Delta}_D(n)$ denotes $(\mathrm{proj}^{(1)}\circ1)\Delta_D(n)$, the part of $\Delta_D$ living in $s\susp_n\Q(n)\otimes D^{\otimes n}$. To compute $\star_\psi^{(n)}(s\varphi)$, notice that the canonical twisting morphism $\pi:\bar\Q\to\Q$ is nothing else than the projection onto the weight $1$ part $s\Q\subset\bar\Q$, so that we have
	\begin{align*}
		\star_\psi^{(n)}(s\varphi) =&\ \gamma_A(\psi\circ s\varphi)\Delta_{s^{-1}D}(n)\\
		=&\ \gamma_A(\Psi \pi\circ s\varphi)\Delta_{s^{-1}D}(n)\\
		=&\ \gamma_A(\Psi\circ s\varphi)s^{-1}\overline{\Delta}_{s^{-1}D}(n)\ ,
	\end{align*}
	where $\overline{\Delta}_{s^{-1}D}(n)$ is a notation for $(\pi\circ1)\Delta_{s^{-1}D}$, the part of $\Delta_{s^{-1}D}$ contained in $s\Q(n)\otimes(s^{-1}D)^{\otimes n}$. To finish the proof, we compare their actions on an element of $s^{-1}D$. Let $x\in D$, we use Sweedler's notation and write
	\[
	\overline{\Delta}_D(n)(x) = s\susp_nq_{(0)}\otimes x_{(1)}\otimes\cdots\otimes x_{(n)}\ .
	\]
	We have
	\begin{align*}
		s\ell_n(\varphi,\ldots,\varphi)(s^{-1}x) =&\ -\ell_n(\varphi,\ldots,\varphi)(x)\\
		=&\ -\gamma_A\big((s^{-1}\susp_n^{-1}\Psi(n))\circ\varphi\big)\overline{\Delta}_D(n)(x)\\
		=&\ -\gamma_A\big((s^{-1}\susp_n^{-1}\Psi(n))\circ\varphi\big)(s\susp_nq_{(0)}\otimes x_{(1)}\otimes\cdots\otimes x_{(n)})\\
		=&\ (-1)^{n(n+|q_{(0)}|) + \epsilon + 1}\gamma_A\big((s^{-1}\susp_n^{-1}\Psi(n)(s\susp_nq_{(0)}))\otimes\varphi(x_{(1)})\otimes\cdots\otimes\varphi(x_{(n)})\big)\\
		=&\ (-1)^{n|q_{(0)}| + \epsilon + \frac{n(n-1)}{2}}\gamma_A\big(\Psi(q_{(0)})\otimes\varphi(x_{(1)})\otimes\cdots\otimes\varphi(x_{(n)})\big)\ .
	\end{align*}
	Notice that the sign in the first line is negative as $|\ell_n(\varphi,\ldots,\varphi)| = -1$. In the last two lines, the sign $\epsilon$ is such that
	\[
	(\varphi\otimes\cdots\otimes\varphi)(x_{(1)}\otimes\cdots\otimes x_{(n)}) = (-1)^\epsilon\varphi(x_{(1)})\otimes\cdots\otimes\varphi(x_{(n)})\ .
	\]
	For the other side of the equation, we have
	\[
	\overline{\Delta}_{s^{-1}D}(n)(s^{-1}x) = (-1)^{n|q_{(0)}| + \epsilon + 1 + \frac{n(n-1)}{2}}sq_{(0)}\otimes s^{-1}x_{(1)}\otimes\cdots\otimes s^{-1}x_{(n)}\ .
	\]
	Therefore, we have
	\begin{align*}
		\star_\psi^{(n)}(s\varphi)(s^{-1}x) =&\ \gamma_A(\Psi\circ s\varphi)s^{-1}\overline{\Delta}_{s^{-1}D}(n)(s^{-1}x)\\
		=&\ (-1)^{n|q_{(0)}| + \epsilon + 1 + \frac{n(n-1)}{2}}\gamma_A(\Psi\circ s\varphi)(q_{(0)}\otimes s^{-1}x_{(1)}\otimes\cdots\otimes s^{-1}x_{(n)})\\
		=&\ (-1)^{n|q_{(0)}| + \epsilon + \frac{n(n-1)}{2} + 1}\gamma_A\big(\Psi(q_{(0)})\otimes\varphi(x_{(1)})\otimes\cdots\otimes\varphi(x_{(n)})\big)\ .
	\end{align*}
	The two terms differ by a sign, concluding the proof.
\end{proof}

\begin{corollary} \label{cor:MCofHom}
	Let $A$ be a $\P$-algebra, and let $D$ be a $\bar(\susp\otimes\Q)$-coalgebra. If $A$ is complete, we have a natural bijection
	\[
	\hom_{\mathsf{dg}\P\text{-}\mathsf{alg}}(\widehat{\Omega}_\psi(s^{-1}D),A)\cong\MC(\hom^\Psi(D,A))\ .
	\]
	If $D$ is conilpotent, we have natural bijections
	\[
	\hom_{\mathsf{dg}\P\text{-}\mathsf{alg}}(\Omega_\psi(s^{-1}D),A)\cong\MC(\hom^\Psi(D,A))\cong\hom_{\mathsf{dg}\bar\Q\text{-}\mathsf{cog}}(s^{-1}D,\bar_\psi A)\ .
	\]
\end{corollary}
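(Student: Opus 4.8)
The plan is to obtain every bijection by composing the preceding theorem, which identifies twisting morphisms with Maurer--Cartan elements, with the appropriate bar--cobar adjunction for algebras and coalgebras over (co)operads; there is essentially no new computation to do. Throughout I take $\C=\bar\Q$ and $\alpha=\psi=\pi^*\Psi\in\Tw(\bar\Q,\P)$, and I use that $s^{-1}D$ is canonically a $\bar\Q$-coalgebra, as recorded just before the theorem. The preceding theorem provides, for $\varphi$ of degree $-1$, a natural bijection sending the twisting morphism $s\varphi\in\Tw_\psi(s^{-1}D,A)$ to the Maurer--Cartan element $\varphi\in\MC(\hom^\Psi(D,A))$; by the remark following that theorem this bijection is valid whenever the Maurer--Cartan equation makes sense, that is when $A$ is complete or when $D$ is conilpotent.

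First I would treat the case where $A$ is complete. Applying the proposition above (the complete cobar--twisting morphism adjunction) with $\alpha=\psi$ to the $\bar\Q$-coalgebra $s^{-1}D$ yields a natural bijection
\[
\hom_{\mathsf{dg}\widehat{\P}\text{-}\mathsf{alg}}\big(\widehat{\Omega}_\psi(s^{-1}D),A\big)\cong\Tw_\psi(s^{-1}D,A)\ .
\]
Composing with the theorem gives the first asserted bijection. The only point to check is that the left-hand hom-set, taken in complete $\P$-algebras, agrees with $\hom_{\mathsf{dg}\P\text{-}\mathsf{alg}}(\widehat{\Omega}_\psi(s^{-1}D),A)$: since both $\widehat{\Omega}_\psi(s^{-1}D)=\widehat{\P}(s^{-1}D)$ and $A$ are complete, a morphism of the underlying dg $\P$-algebras is the same datum as a morphism of complete $\P$-algebras, both being determined by the image of the generators $s^{-1}D$, with completeness of $A$ exactly what allows one to pass back from such a map on generators to an algebra morphism.

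Next I would treat the conilpotent case. If $D$ is conilpotent then so is the $\bar\Q$-coalgebra $s^{-1}D$, and the classical bar--cobar adjunction for conilpotent coalgebras and algebras over an operad (the fundamental theorem of twisting morphisms, \cite[Sect. 11.3]{vallette12}) applies verbatim with the twisting morphism $\psi$, giving natural bijections
\[
\hom_{\mathsf{dg}\P\text{-}\mathsf{alg}}\big(\Omega_\psi(s^{-1}D),A\big)\cong\Tw_\psi(s^{-1}D,A)\cong\hom_{\mathsf{dg}\bar\Q\text{-}\mathsf{cog}}\big(s^{-1}D,\bar_\psi A\big)\ ,
\]
where now $A$ need not be complete because conilpotence of $s^{-1}D$ makes the sum defining $\star_\psi$ finite. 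Inserting the theorem's bijection $\Tw_\psi(s^{-1}D,A)\cong\MC(\hom^\Psi(D,A))$ into the middle yields the displayed chain of bijections in the conilpotent case.

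The proof is therefore a matter of assembling known bijections, and I do not expect a genuine obstacle. The one place that requires a little care --- and which I would address explicitly --- is bookkeeping the two twisting-morphism formalisms: the complete cobar construction $\widehat{\Omega}_\psi$ of the proposition (used when $A$ is complete and $D$ arbitrary) versus the ordinary cobar $\Omega_\psi$ of \cite{vallette12} (used when $D$ is conilpotent and $A$ arbitrary). In both regimes $\Tw_\psi(s^{-1}D,A)$ is literally the same set of degree-$0$ maps solving $\partial(s\varphi)+\star_\psi(s\varphi)=0$, so the theorem applies on the nose and the two adjunctions glue along it; naturality in $A$ and in $D$ is inherited from that of each constituent bijection.
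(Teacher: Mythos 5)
Your proof is correct and matches the paper's own (implicit) argument exactly: the corollary is stated without a separate proof precisely because it follows by composing the preceding proposition $\hom_{\mathsf{dg}\widehat{\P}\text{-}\mathsf{alg}}(\widehat{\Omega}_\psi(s^{-1}D),A)\cong\Tw_\psi(s^{-1}D,A)$ (resp.\ the classical bar--cobar adjunction of \cite[Sect. 11.3]{vallette12} when $D$ is conilpotent) with the theorem identifying $\Tw_\psi(s^{-1}D,A)$ with $\MC(\hom^\Psi(D,A))$, which is exactly your assembly. Your one extra care point --- identifying morphisms of plain dg $\P$-algebras out of $\widehat{\Omega}_\psi(s^{-1}D)$ with morphisms of complete $\P$-algebras --- is the same silent identification the paper itself makes in passing from the proposition's $\hom_{\mathsf{dg}\widehat{\P}\text{-}\mathsf{alg}}$ to the corollary's $\hom_{\mathsf{dg}\P\text{-}\mathsf{alg}}$, and is properly read as notation for the complete-algebra hom-set rather than a fact requiring proof.
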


\subsection{The dual case}

In the dual setting, let $\Psi:\Q\to\P$ be a morphism of dg operads such that $\Q$ is finite dimensional in every arity, let $A$ be a complete $\P$-algebra, and let $C$ be a finite dimensional $\Omega((\susp^{-1})^c\otimes\Q)$-algebra. Then the dual $C^\vee$ of $C$ is naturally a $\bar(\susp\otimes\Q)$-coalgebra. Corollary \ref{cor:MCofHom} translates as follows in this context.

\begin{corollary} \label{cor:MCel}
	Let $A$ be a complete $\P$-algebra and let $C$ be a finite dimensional $\Omega((\susp^{-1})^c\otimes\Q^\vee)$-algebra. We have a natural bijection
	\[
	\hom_{\mathsf{dg}\P\text{-}\mathsf{alg}}(\widehat{\Omega}_\psi(s^{-1}C^\vee),A)\cong\MC(A\otimes^\Psi C)\ .
	\]
\end{corollary}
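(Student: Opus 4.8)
The plan is to deduce this corollary from Corollary~\ref{cor:MCofHom} by finite-dimensional duality, so that no new analytic or sign-tracking work is required. First I would set $D\coloneqq C^\vee$, which is a $\bar(\susp\otimes\Q)$-coalgebra as recorded at the beginning of this subsection. Since $C$ is finite dimensional, the canonical evaluation map gives a natural isomorphism $C^{\vee\vee}\cong C$, hence $D^\vee\cong C$. By the corollary immediately following Theorem~\ref{thm:mainThm}, the natural isomorphism $\hom(D,A)\cong A\otimes D^\vee$ upgrades to an isomorphism of $\L_\infty$-algebras
\[
\hom^\Psi(C^\vee,A)\cong A\otimes^\Psi C^{\vee\vee}\cong A\otimes^\Psi C\ .
\]
Because $\MC(-)$ depends only on the differential and the brackets $\ell_n$, a strict isomorphism of $\L_\infty$-algebras carries Maurer--Cartan elements to Maurer--Cartan elements bijectively, yielding a natural bijection $\MC(\hom^\Psi(C^\vee,A))\cong\MC(A\otimes^\Psi C)$.

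Next I would invoke Corollary~\ref{cor:MCofHom} for the coalgebra $D=C^\vee$. Since $A$ is assumed complete, the first bijection of that corollary reads
\[
\hom_{\mathsf{dg}\P\text{-}\mathsf{alg}}(\widehat{\Omega}_\psi(s^{-1}C^\vee),A)\cong\MC(\hom^\Psi(C^\vee,A))\ .
\]
Composing this with the bijection of the previous paragraph produces exactly the asserted natural bijection $\hom_{\mathsf{dg}\P\text{-}\mathsf{alg}}(\widehat{\Omega}_\psi(s^{-1}C^\vee),A)\cong\MC(A\otimes^\Psi C)$.

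The steps requiring care are bookkeeping rather than a genuine obstacle. I would check that the isomorphism $\hom^\Psi(C^\vee,A)\cong A\otimes^\Psi C$ respects the degree and sign conventions entering the definition of $\MC$ (recall Maurer--Cartan elements live in degree $-1$), so that a degree $-1$ solution of the Maurer--Cartan equation on one side corresponds to one on the other; this is automatic since the cited isomorphism is an isomorphism of $\L_\infty$-algebras. I would then confirm naturality in both $A$ and $C$, which follows because each bijection in the composite is itself natural: the first from the naturality of the isomorphism in the corollary to Theorem~\ref{thm:mainThm} together with the naturality of $C\mapsto C^\vee$ on finite-dimensional objects, and the second from the naturality asserted in Corollary~\ref{cor:MCofHom}. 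I expect no substantive difficulty, since all the hard content---completeness, the complete cobar construction $\widehat{\Omega}_\psi$, and the sign computations---was already handled in the proof of the preceding theorem and in Corollary~\ref{cor:MCofHom}; the present statement is a transport of structure along the duality $C\leftrightarrow C^\vee$.
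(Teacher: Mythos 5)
Your proposal is correct and matches the paper's own argument: the paper likewise obtains this corollary by viewing $C^\vee$ as a finite dimensional $\bar(\susp\otimes\Q)$-coalgebra, invoking the first bijection of Corollary~\ref{cor:MCofHom} (using completeness of $A$), and identifying $\hom^\Psi(C^\vee,A)\cong A\otimes^\Psi C$ as $\L_\infty$-algebras via the finite-dimensional duality recorded after Theorem~\ref{thm:mainThm}. No gaps.
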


\subsection{The non-symmetric case}

There is a notion of Maurer--Cartan elements in an $\A_\infty$-algebra, where the relevant equation is
\[
dx + \sum_{n\ge2}m_n(x,\ldots,x) = 0\ .
\]
We denote again the set of all such elements by $\MC$. Notice that we do not have any factor $\tfrac{1}{n!}$ appearing. This is related to the fact that we never need to identify invariants and coinvariants in the ns setting, as we are not considering any group action. It follows that all our results hold over any field. The same reasoning as above gives the following statement.

\begin{theorem}
	Let $A$ be a $\P$-algebra and let $D$ be a $\hom(\bar(\susp\otimes\Q),\P)$-algebra. If $A$ is complete, we have a natural bijection
	\[
	\hom_{\mathsf{dg}\P\text{-}\mathsf{alg}}(\widehat{\Omega}_\psi(s^{-1}D),A)\cong\MC(\hom^\Psi(D,A))\ .
	\]
	If $D$ is conilpotent, we have natural bijections
	\[
	\hom_{\mathsf{dg}\P\text{-}\mathsf{alg}}(\Omega_\psi(s^{-1}D),A)\cong\MC(\hom^\Psi(D,A))\cong\hom_{\mathsf{dg}\bar\Q\text{-}\mathsf{cog}}(s^{-1}D,\bar_\psi(A))\ .
	\]
	Here the Maurer--Cartan elements are now taken in the $\A_\infty$-algebra $\hom^\Psi(D,A)$. Dually, under the usual assumption that $\Q$ is finite dimensional in every arity, if $C$ is a finite dimensional $\Omega((\susp^{-1})^c\otimes\Q^\vee)$-algebra and $A$ is complete, then we have a natural bijection
	\[
	\hom_{\mathsf{dg}\P\text{-}\mathsf{alg}}(\widehat{\Omega}_\psi(s^{-1}C^\vee),A)\cong\MC(A\otimes^\Psi C)\ .
	\]
\end{theorem}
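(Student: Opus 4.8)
The plan is to mirror the symmetric arguments of the preceding subsections, replacing $\L_\infty$ by $\A_\infty$ and $\genmaninDual_\Psi$ by $\genmaninnsDual_\Psi$ throughout, and to isolate the one genuine difference: the disappearance of the factor $\tfrac{1}{n!}$ in the $\A_\infty$ Maurer--Cartan equation. First I would record that the $\A_\infty$-structure on $\hom^\Psi(D,A)$ is obtained by pulling back along $\genmaninnsDual_\Psi\colon\A_\infty\to\hom(\bar(\susp\otimes\Q),\P)$, whose generator $a_n=s^{-1}\susp_n^{-1}m_n^\vee$ is sent to the same element $(s^{-1}\susp_n^{-1}\Psi(n))\,\mathrm{proj}^{(1)}$ that carried the symmetric construction. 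Since the convolution product $\gamma_{\hom(D,A)}$ involves \emph{no} passage from invariants to coinvariants in the non-symmetric world, the evaluation of $m_n(\varphi,\dots,\varphi)$ produces exactly $\gamma_A\big((s^{-1}\susp_n^{-1}\Psi(n))\circ\varphi\big)\overline{\Delta}_D(n)$, with no factorial correction.

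Next I would establish the central comparison: for $\varphi\in\hom(D,A)$ of degree $-1$, the equation $\partial(s\varphi)+\star_\psi(s\varphi)=0$ is equivalent to $d(\varphi)+\sum_{n\ge2}m_n(\varphi,\dots,\varphi)=0$, so that $\Tw_\psi(s^{-1}D,A)\cong\MC(\hom^\Psi(D,A))$. The computation is term-by-term identical to the symmetric proof --- the linear part contributes $s(d\varphi)=-\partial(s\varphi)$ and, for each fixed $n$, the weight-$n$ component $\star_\psi^{(n)}(s\varphi)$ matches $m_n(\varphi,\dots,\varphi)$ up to the same sign tracked there. The key bookkeeping point is that the factor $\tfrac{1}{n!}$ which appeared in the symmetric case arose solely from the identification of $\S_n$-invariants with coinvariants used to define $\gamma_{\hom(D,A)}$; in the non-symmetric setting this identification is absent, and correspondingly the $\A_\infty$ Maurer--Cartan equation carries no such factor, so the two sides still match exactly.

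Having secured this bijection, the remaining statements follow formally. When $A$ is complete I would compose with the adjunction $\hom_{\mathsf{dg}\widehat{\P}\text{-}\mathsf{alg}}(\widehat{\Omega}_\psi(s^{-1}D),A)\cong\Tw_\psi(s^{-1}D,A)$ to obtain the first bijection. When $D$ is conilpotent the coalgebra $s^{-1}D$ is again conilpotent, so I may replace $\widehat{\Omega}_\psi$ by the ordinary cobar construction $\Omega_\psi$ and append the bar--cobar adjunction (Theorem \ref{thm:rosettaStone}) to produce the second chain of bijections ending at $\hom_{\mathsf{dg}\bar\Q\text{-}\mathsf{cog}}(s^{-1}D,\bar_\psi(A))$. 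Finally, for the dual statement I would invoke finite dimensionality of $\Q$ in each arity together with the non-symmetric analog of the identification $\hom^\Psi(D,A)\cong A\otimes^\Psi D^\vee$, applied to $D=C^\vee$ so that $s^{-1}D=s^{-1}C^\vee$, which converts the first bijection into $\hom_{\mathsf{dg}\P\text{-}\mathsf{alg}}(\widehat{\Omega}_\psi(s^{-1}C^\vee),A)\cong\MC(A\otimes^\Psi C)$. The only real obstacle is the sign audit in the central comparison, together with the verification that the lost factorial is precisely accounted for by the missing invariant--coinvariant passage; everything else is transport along already-established adjunctions.
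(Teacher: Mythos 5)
Your proposal is correct and follows essentially the same route as the paper: the paper itself proves the symmetric comparison $\partial(s\varphi)+\star_\psi(s\varphi)=0\Leftrightarrow d\varphi+\sum_n\tfrac{1}{n!}\ell_n(\varphi,\dots,\varphi)=0$ and then dispatches the non-symmetric theorem with ``the same reasoning as above,'' noting exactly your key point --- that the missing $\tfrac{1}{n!}$ in the $\A_\infty$ Maurer--Cartan equation corresponds to the absent invariants/coinvariants identification, which is also why the result holds over any field. Your assembly of the final bijections via $\Tw_\psi(s^{-1}D,A)$, the complete cobar adjunction, the conilpotent bar--cobar adjunction, and the identification $\hom^\Psi(C^\vee,A)\cong A\otimes^\Psi C$ matches the paper's Corollaries \ref{cor:MCofHom} and \ref{cor:MCel} step for step.
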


\section{Applications to deformation theory}\label{sect:defTheory}

In this section, we apply the theorems we just proved to solve two problems in deformation theory. The first application is the construction of the deformation complex for morphisms of algebras over an operad. The second one is the representation of the Deligne--Hinich--Getzler $\infty$-groupoid for dg Lie algebras.

\subsection{First application: the deformation complex for morphisms of algebras} \label{subsection:defCplx}

Given an augmented dg operad $\P$ and two $\P$-algebras $X$ and $Y$, one might ask what is the deformation complex coding morphisms of $\P$-algebras from $X$ to $Y$. We solve this problem using Corollary \ref{cor:MCofHom}.

\medskip

Following Quillen \cite{quillen70}, the first step of the construction is to replace $X$ with a cofibrant replacement in the category of $\P$-algebras. In order to do so, one considers the canonical Koszul morphism
\[
\pi:\bar\P\longrightarrow\P
\]
and the associated bar-cobar adjunction between $\P$-algebras and $\bar\P$-coalgebras. One can then take $\Omega_\pi\bar_\pi X$ as a functorial resolution for $X$ and try to find the deformation complex for morphisms
\[
\Omega_\pi\bar_\pi X\longrightarrow Y\ .
\]
Notice that $\bar_\pi X$ is a conilpotent $\bar\P$-coalgebra, so that we can use the results proved in Section \ref{sect:MCel}.

\begin{proposition}
	We have
	\[
	\hom_{\mathsf{dg}\P\text{-}\mathsf{alg}}(\Omega_\pi\bar_\pi X,Y)\cong\MC\left(\hom^\P(s\bar_\pi X,Y)\right)\ ,
	\]
	where $\hom^\P\coloneqq\hom^{\id_\P}$.
\end{proposition}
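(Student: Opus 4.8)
The plan is to read off the statement as the special case of Corollary~\ref{cor:MCofHom} in which $\Q = \P$ and $\Psi = \id_\P$. Under this specialization the associated twisting morphism is $\psi = \pi^*\id_\P = \pi$, the canonical Koszul twisting morphism $\bar\P\to\P$, and the convolution $\L_\infty$-structure $\hom^\Psi$ becomes $\hom^{\id_\P} = \hom^\P$. It therefore remains only to match the coalgebra appearing in the corollary with $s\bar_\pi X$.

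First I would set $D \coloneqq s\bar_\pi X$. Using the canonical isomorphism $\bar(\susp\otimes\P)\cong\susp^c\otimes\bar\P$ together with the identification of $(\susp^c\otimes\bar\P)$-coalgebras with operadic suspensions of $\bar\P$-coalgebras, the complex $D$ inherits a canonical $\bar(\susp\otimes\P)$-coalgebra structure whose desuspension $s^{-1}D$ is precisely the $\bar\P$-coalgebra $\bar_\pi X$. Since $\bar_\pi X$ is a conilpotent $\bar\P$-coalgebra, $D$ is a conilpotent $\bar(\susp\otimes\P)$-coalgebra, so we are in the second (conilpotent) regime of Corollary~\ref{cor:MCofHom}; this is exactly what allows us to drop any completeness hypothesis on $Y$ and treat an arbitrary target $\P$-algebra.

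Feeding $A = Y$, $D = s\bar_\pi X$, and $\psi = \pi$ into the conilpotent bijection of Corollary~\ref{cor:MCofHom} gives
\[
\hom_{\mathsf{dg}\P\text{-}\mathsf{alg}}(\Omega_\pi(s^{-1}D),Y)\cong\MC(\hom^\P(D,Y))\ ,
\]
and substituting $s^{-1}D = \bar_\pi X$ and $D = s\bar_\pi X$ yields the claimed bijection.

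The only genuine content to verify --- the step I expect to be the main obstacle --- is the compatibility of the operadic suspension with the bar and cobar constructions: namely that, under $\bar(\susp\otimes\P)\cong\susp^c\otimes\bar\P$, the cobar functor $\Omega_\psi$ applied to $s^{-1}D$ reproduces the relative cobar construction $\Omega_\pi\bar_\pi X$ used in Quillen's resolution, and that the coalgebra structure transported onto $s\bar_\pi X$ is the one implicitly used in $\hom^\P(s\bar_\pi X, Y)$. This is a bookkeeping check about signs and degree shifts coming from the operadic desuspension $(\susp^{-1})^c$; once the suspension conventions of Section~\ref{sect:recollection} are tracked carefully it is purely formal, and no new ideas beyond Corollary~\ref{cor:MCofHom} are required.
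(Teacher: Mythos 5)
Your proposal is correct and matches the paper's own proof, which simply states that the proposition is a direct consequence of Corollary \ref{cor:MCofHom}; you have specialized it exactly as intended ($\Q=\P$, $\Psi=\id_\P$, hence $\psi=\pi$, $D=s\bar_\pi X$), invoking the conilpotent case precisely because $\bar_\pi X$ is a conilpotent $\bar\P$-coalgebra, which is the same observation the paper makes just before stating the proposition. The suspension bookkeeping you flag is indeed the only thing to check, and it is handled by the paper's standing convention that a $\bar(\susp\otimes\Q)$-coalgebra $D$ has $s^{-1}D$ canonically a $\bar\Q$-coalgebra.
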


\begin{proof}
	This is direct consequence of Corollary \ref{cor:MCofHom}.
\end{proof}

This motivates the following definition.

\begin{definition}\label{def:DefCplx}
	Let $\P$ be a dg operad and let $X,Y$ be two $\P$-algebras. The \emph{deformation complex of morphisms of $\P$-algebras} from $X$ to $Y$ is the $\L_\infty$-algebra $\hom^\P(s\bar_\pi X,Y)$.
\end{definition}

\begin{remark}
	In certain cases, one can find alternative versions for this object. For example, if $\P$ is binary Koszul, then one can give the structure of a Lie algebra to $\hom(s\bar_{\kappa}X,Y)$, where
	\[
	\kappa:\P^{\antishriek}\longrightarrow\P
	\]
	is the canonical twisting morphism, see \cite[Sect. 7.4.1]{vallette12}. The Maurer--Cartan elements of this Lie algebra correspond to the morphisms of $\P$-algebras from $\Omega_{\kappa}\bar_{\kappa}X$ to $Y$. The relation between this Lie algebra and the $\L_\infty$-algebra introduced in Definition \ref{def:DefCplx} is given by (the pre-dual version of) Proposition \ref{prop:ManinGenmanin}. Namely, we have the commutative square
	\begin{center}
		\begin{tikzpicture}
			\node (a) at (0,0){$\L_\infty$};
			\node (b) at (3.5,0){$\hom(\bar(\susp\otimes\P),\P)$};
			\node (c) at (0,-1.5){$\lie$};
			\node (d) at (3.5,-1.5){$\hom(\susp\otimes\P^{\antishriek},\P)$};
			
			\draw[->] (a)--node[above]{$\genmaninDual_\P$}(b);
			\draw[->] (a)--(c);
			\draw[->] (b)--node[right]{$f_\kappa^*$} (d);
			\draw[->] (c)--(d);
		\end{tikzpicture}
	\end{center}
	where the map $f_\kappa:\P^{\antishriek}\to\bar\P$ is the quasi-isomorphism of \cite[Prop. 6.5.8]{vallette12}. We can make $\bar_\kappa X$ into a $\bar\P$-coalgebra by pushing forward its structure along $f_\kappa$, and the resulting coalgebra is quasi-isomorphic to $\bar_\pi X$. Then the two $\L_\infty$-algebras $\hom(s\bar_{\kappa}X,Y)$ and $\hom^\P(s\bar_\pi X,Y)$ are quasi-isomorphic, and the quasi-isomorphism is filtered (e.g. with respect to the filtration induced by the coradical filtration of the coalgebras). Thus, we can apply the Dolgushev--Rogers theorem \cite[Thm. 2.2]{dolgushev15} to show that the deformation problems associated to the two algebras are the same.
\end{remark}

\subsection{Second application: a cosimplicial dg Lie algebra modeling Maurer--Cartan elements} \label{subsection:cosimplicial}

Here, we present an overview of how one can use the results given here to associate to every complete dg Lie algebra a new, small $\infty$-groupoid (i.e. Kan complex) representing its Maurer--Cartan elements which is equivalent to the well-known Deligne--Hinich--Getzler $\infty$-groupoid. The construction is functorial and the resulting functor from complete dg Lie algebra to simplicial sets is represented by a cosimplicial dg Lie algebra. These results are presented in full details (and more generality) in the article \cite{rn17cosimplicial}, where various theorems given here (namely, Theorems \ref{thm:mainThm}, and \ref{thm:twoLinftyStrAreEqual}, as well as Corollary \ref{cor:MCel}) are used in a crucial manner.

\medskip

The present article was written with the following application in mind. Let $(\g,F_\bullet\g)$ be a filtered dg Lie algebra. The \emph{Deligne--Hinich--Getzler $\infty$-groupoid} is the Kan complex
\[
\MC_\bullet(\g)\coloneqq\varprojlim_n\MC(\g/F_n\g\otimes\Omega_\bullet)\ ,
\]
where $\Omega_\bullet$ is the Sullivan simplicial dg commutative algebra of the polynomial differential forms on the geometric simplices. There is a simplicial contraction due to Dupont of $\Omega_\bullet$ onto a sub-simplicial set $C_\bullet$ which is finite dimensional at every level, which is just the cellular cochain complex of the geometric simplices. Using this retraction, one can obtain a filtered $\L_\infty$-algebra structure on $\g\otimes C_\bullet$. Using methods from homotopical algebra, we can prove that there is a homotopy equivalence of simplicial sets
\[
\MC_\bullet(\g)\simeq\MC(\g\otimes C_\bullet)\ .
\]
Finally, using Corollary \ref{cor:MCel} on $\g\otimes C_\bullet$ for the morphism $\Psi$ being the identity of the operad $\lie$, we have that
\[
\MC(\g\otimes C_\bullet)\cong\hom_\mathsf{dgLie}(\widehat{\Omega}_\pi(s^{-1}C_\bullet^\vee),\g)\ ,
\]
which provides thus a new model $\mc_\bullet\coloneqq\widehat{\Omega}_\pi(s^{-1}C_\bullet^\vee)$ for the space of Maurer--Cartan elements of a dg Lie algebra. This leads to the following result.

\begin{theorem}[{\cite[Cor. 5.3]{rn17cosimplicial}}]
	Let $\g$ be a complete dg Lie algebra. There is a weak equivalence of simplicial sets
	\[
	\MC_\bullet(\g)\simeq\hom_\mathsf{dgLie}(\mc_\bullet,\g)\ .
	\]
	It is natural in $\g$.
\end{theorem}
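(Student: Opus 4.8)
The plan is to assemble the chain of identifications outlined above into a single zig-zag of weak equivalences, using the structural results of the present paper to control the $\L_\infty$-algebra structures and importing a homotopy-invariance statement for Maurer--Cartan spaces from the literature. Throughout I take $\P=\Q=\lie$ and $\Psi=\id_\lie$, so that the relevant twisting morphism is the canonical $\psi=\pi\colon\bar\lie\to\lie$ and the target operad of the main theorem is $\lie\otimes\Omega((\susp^{-1})^c\otimes\lie^\vee)\cong\lie\otimes\C_\infty$, since $\lie^{\shriek}\cong\com$ and $\com_\infty=\C_\infty$.

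First I would fix, for each $n$, the Dupont contraction of the commutative dg algebra $\Omega_n$ of polynomial forms on the $n$-simplex onto the finite-dimensional subcomplex $C_n$, assembling these into a simplicial contraction of $\Omega_\bullet$ onto $C_\bullet$. Because $\g$ is a Lie algebra and $\Omega_\bullet$ is commutative, $\g\otimes\Omega_\bullet$ is a simplicial dg Lie algebra, which is exactly the Manin morphism $\manin_{\id_\lie}\colon\lie\to\lie\otimes\com$ applied levelwise. Tensoring the contraction with $\id_\g$ yields a levelwise contraction of $\g\otimes\Omega_\bullet$ onto $\g\otimes C_\bullet$, and the Homotopy Transfer Theorem equips $\g\otimes C_\bullet$ with a simplicial $\L_\infty$-algebra structure together with $\infty$-quasi-isomorphisms $i_\infty$ and $p_\infty$ extending $1\otimes i$ and $1\otimes p$. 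The key structural input is Theorem \ref{thm:twoLinftyStrAreEqual}: this transferred structure coincides with the one obtained by first transferring the commutative structure on $\Omega_\bullet$ to a $\C_\infty$-structure on $C_\bullet$ and then pulling back along $\genmanin_{\id_\lie}\colon\L_\infty\to\lie\otimes\C_\infty$ of Theorem \ref{thm:mainThm}, and moreover the two descriptions of $i_\infty$ and $p_\infty$ agree. This is the bridge between the two halves of the argument: the first (transfer of a strict Lie algebra) is what drives the homotopy-invariance step, while the second (pullback via the main theorem) is what makes Corollary \ref{cor:MCel} applicable.

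Next I would establish the weak equivalence $\MC_\bullet(\g)\simeq\MC(\g\otimes C_\bullet)$. Here I would equip both sides with the filtrations induced by $F_\bullet\g$, verify that the transferred $\L_\infty$-structure and the comparison $\infty$-quasi-isomorphisms $i_\infty,p_\infty$ are compatible with these filtrations, and then invoke a homotopy-invariance theorem for the Deligne--Getzler functor, in the style of Getzler \cite{getzler09} and the Dolgushev--Rogers theorem \cite[Thm. 2.2]{dolgushev15}, to the effect that a filtered $\infty$-quasi-isomorphism of complete $\L_\infty$-algebras induces a weak equivalence of the associated Kan complexes; this is applied levelwise and assembled over the simplicial direction. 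I expect this to be the main obstacle: one must check that the transfer is carried out compatibly with faces and degeneracies, so that $\g\otimes C_\bullet$ is genuinely a simplicial $\L_\infty$-algebra, and that replacing $\Omega_\bullet$ by $C_\bullet$ is compatible with the inverse limit $\varprojlim_n\MC(\g/F_n\g\otimes-)$ defining $\MC_\bullet(\g)$. The finite-dimensionality of $C_\bullet$ at each level is precisely what later licenses the dualization appearing in Corollary \ref{cor:MCel}.

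Finally I would apply Corollary \ref{cor:MCel} with $A=\g$ complete and $C=C_\bullet$ finite-dimensional at each level, obtaining the natural bijection
\[
\MC(\g\otimes^{\id_\lie}C_\bullet)\cong\hom_{\mathsf{dg}\lie\text{-}\mathsf{alg}}\big(\widehat{\Omega}_\pi(s^{-1}C_\bullet^\vee),\g\big),
\]
that is $\MC(\g\otimes C_\bullet)\cong\hom_\mathsf{dgLie}(\mc_\bullet,\g)$ with $\mc_\bullet\coloneqq\widehat{\Omega}_\pi(s^{-1}C_\bullet^\vee)$. Composing with the weak equivalence of the previous paragraph gives $\MC_\bullet(\g)\simeq\hom_\mathsf{dgLie}(\mc_\bullet,\g)$. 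Naturality in $\g$ is then immediate, since every map in the chain is natural in the Lie algebra argument: the Dupont contraction is independent of $\g$, the transfer and the invariance equivalence are functorial in $\g$, and the bijection of Corollary \ref{cor:MCel} is natural by construction.
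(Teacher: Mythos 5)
Your skeleton coincides with the paper's own outline of this result (the full proof of which is deferred to \cite{rn17cosimplicial}): the Dupont contraction plus the Homotopy Transfer Theorem, Theorem \ref{thm:twoLinftyStrAreEqual} to recognize the transferred structure on $\g\otimes C_\bullet$ as the pullback along $\genmanin_{\id_\lie}\colon\L_\infty\to\lie\otimes\C_\infty$ of Theorem \ref{thm:mainThm}, and Corollary \ref{cor:MCel} (with $A=\g$ complete and $C=C_n$ finite dimensional) to convert $\MC(\g\otimes C_\bullet)$ into $\hom_{\mathsf{dgLie}}(\mc_\bullet,\g)$. These two bookends of your argument are correct, and your naturality discussion is fine.

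The gap is in the middle step, the weak equivalence $\MC_\bullet(\g)\simeq\MC(\g\otimes C_\bullet)$, which is the only genuinely hard part and which you dispatch by invoking the Dolgushev--Rogers theorem ``applied levelwise and assembled over the simplicial direction''. That theorem takes a filtered $\infty$-quasi-isomorphism $f\colon\h\rightsquigarrow\h'$ of complete $\L_\infty$-algebras and produces a weak equivalence of Maurer--Cartan \emph{spaces} $\MC_\bullet(\h)\to\MC_\bullet(\h')$. Applied levelwise to $p_\infty^{(k)}\colon\g\hat{\otimes}\Omega_k\rightsquigarrow\g\otimes C_k$, it compares the wrong objects: for each fixed $k$ it yields an equivalence $\MC_\bullet(\g\hat{\otimes}\Omega_k)\simeq\MC_\bullet(\g\otimes C_k)$ of simplicial sets in a \emph{new} simplicial direction, whereas the map you must analyze is the simplicial map whose $k$-th level is the map of plain Maurer--Cartan \emph{sets} $\MC(\g\hat{\otimes}\Omega_k)\to\MC(\g\otimes C_k)$ induced by $p_\infty^{(k)}$. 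Being a weak equivalence is not a levelwise condition on a map of simplicial sets, and levelwise Dolgushev--Rogers only controls gauge-equivalence classes (e.g.\ $\pi_0$ of the level-$k$ Maurer--Cartan spaces), not the Maurer--Cartan sets that form the simplices; so ``assembling over the simplicial direction'' is not an argument. You also locate the main difficulty in the simpliciality of the transfer, but that part is immediate from the simpliciality of the Dupont contraction; the difficulty is precisely the equivalence itself. The cited article closes this gap by a different mechanism: a theorem of Bandiera, which rests on the side conditions satisfied by the Dupont contraction, identifies $\MC(\g\otimes C_\bullet)$ (with the transferred structure) with Getzler's gauge-fixed sub-simplicial set $\gamma_\bullet(\g)\subseteq\MC_\bullet(\g)$ of \cite{getzler09}, and Getzler's theorem that the inclusion $\gamma_\bullet(\g)\hookrightarrow\MC_\bullet(\g)$ is a weak equivalence for nilpotent algebras is then promoted to complete dg Lie algebras by a tower argument over the quotients $\g/F_n\g$. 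Without this step (or an equally substantial replacement, such as a direct computation of the homotopy groups of both sides), your chain of equivalences does not close.
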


This, plus a study of the properties of the new $\infty$-groupoid, is the content of the article \cite{rn17cosimplicial}.

\section{Examples} \label{sect:sect5}

In this section, we will study the $\L_\infty$-algebra structures obtained from our main theorem (\ref{thm:mainThm}) for some canonical morphisms between the three most often appearing operads: the three graces $\com$, $\lie$ and $\ass$. Namely, we will study the identities of these operads and the sequence of morphisms
\[
\lie\stackrel{a}{\longrightarrow}\ass\stackrel{u}{\longrightarrow}\com\ ,
\]
where the first morphism corresponds to the antisymmetrization of the multiplication of an associative algebra, and where the second one corresponds to forgetting that the multiplication of a commutative algebra is commutative to get an associative algebra.

\medskip

Many more examples of less common, but still very interesting operads, both in the symmetric and in the ns case, as well as various morphisms relating them, can be found in \cite[Sect. 13]{vallette12}.

\subsection{Notations} \label{subsect:notationsExamples}

We will denote by $b\in\lie(2)$ the generating operation of $\lie$, i.e. the Lie bracket. The operad $\ass$ is the symmetric version  the non-symmetric operad $\as$ coding associative algebras. It is given by $\ass(n) = \k[\S_n]$. We denote the canonical basis of $\ass(n)$ by $\{m_\sigma\}_{\sigma\in\S_n}$. The element $m_\sigma\in\ass(n)$ corresponds to the operation
\[
(a_1,\ldots,a_n)\longmapsto a_{\sigma^{-1}(1)}\cdots a_{\sigma^{-1}(n)}
\]
at the level of associative algebras. The action of the symmetric group is of course given by $(m_\sigma)^\tau = m_{\sigma\tau}$. As before, we denote by $\mu_n\in\com(n)$ the canonical element. The morphism $a:\lie\to\ass$ is given by sending $b$ to $m_\id - m_{(12)}$, and corresponds to antisymmetrization at the level of algebras. The morphism $u:\ass\to\com$ is given by sending both $m_\id$ and $m_{(12)}$ to $\mu_2$. For the homotopy counterparts of the operads mentioned above: as before we denote by $\ell_n\in\L_\infty(n)$ the element of $\L_\infty(n)$ corresponding to the $n$-ary bracket. We have $\ass^{\antishriek}\cong(\susp^{-1})^c\otimes\ass^\vee$, thus in each arity $n\ge2$ the operad $\ass_\infty$ has $n!$ generators
\[
\overline{m}_\sigma\coloneqq s^{-1}\susp^{-1}_nm_\sigma^\vee\in\ass_\infty\ ,\qquad \sigma\in\S_n\ .
\]
The action of the symmetric group on these generators is given by $(\overline{m}_\sigma)^\tau = (-1)^\tau\overline{m}_{\sigma\tau}$. Finally, the operad $\C_\infty$ coding homotopy commutative algebras the same thing as an $\A_\infty$-algebra that vanishes on the sum of all non-trivial shuffles, see \cite{kadeishvili88} or \cite[Sect. 13.1.8]{vallette12}.

\subsection{The identity $\com\longrightarrow\com$} This is the simplest example. The identity of $\com$ induces the morphism
\[
\genmanin_\com:\L_\infty\longrightarrow\com\otimes\L_\infty
\]
which sends the element $\ell_n$ to
\[
\mu_n\otimes s^{-1}\susp_n\mu_n^\vee = \mu_n\otimes\ell_n\ .
\]
Therefore, it is the canonical isomorphism
\[
\L_\infty\cong\com\otimes\L_\infty\ .
\]
If $A$ is a commutative algebra and $C$ is an $\L_\infty$-algebra, then the operations on $A\otimes^\com C$ are given by
\[
\ell_n(a_1\otimes c_1,\ldots,a_n\otimes c_n) = (-1)^\epsilon \mu_n(a_1,\ldots,a_n)\otimes\ell_n(c_1,\ldots,c_n)\ ,
\]
where $(-1)^\epsilon$ is the sign obtained by commuting the $a_i$'s and the $c_i$'s.

\subsection{The identity $\ass\longrightarrow\ass$} Since the operad $\ass$ satisfies $\ass^{\shriek} = \ass$, the induced morphism is
\[
\genmanin_\ass:\L_\infty\longrightarrow\ass\otimes\ass_\infty\ .
\]
It sends $\ell_n$ to
\[
\sum_{\sigma\in\S_n}m_\sigma\otimes s^{-1}\susp_n m_\sigma^\vee = \sum_{\sigma\in\S_n}m_\sigma\otimes \overline{m}_\sigma = \sum_{\sigma\in\S_n}(-1)^\sigma(m_\id\otimes\overline{m}_\id)\ .
\]
If $A$ is an associative algebra and $C$ is an $\ass_\infty$-algebra, then the $\L_\infty$ operations on $A\otimes^\ass C$ are given by
\[
\ell_n(a_1\otimes c_1,\ldots,a_n\otimes c_n) = \sum_{\sigma\in\S_n}(-1)^{\sigma+\epsilon} m_e(a_{\sigma^{-1}(1)},\ldots a_{\sigma^{-1}(n)})\otimes\overline{m}_e(c_{\sigma^{-1}(1)},\ldots c_{\sigma^{-1}(n)})\ ,
\]
where $\epsilon$ is the sign obtained by switching the $a_i$'s and the $c_i$'s, and correspond therefore to a kind of antisymmetrization of $\ass_\infty$.

\subsection{The identity $\lie\longrightarrow\lie$} The last identity we have to look at is the identity of the operad $\lie$. It gives rise to a morphism of dg operads
\[
\genmanin_\lie:\L_\infty\longrightarrow\lie\otimes\C_\infty\ .
\]
It is of more complicated description, but comparing formul{\ae} we see that it is the same structure that is used in a fundamental way in the article \cite[pp.19--20]{turchin15} on Hochschild--Pirashvili homology.
	
\subsection{The forgetful morphism $u:\ass\longrightarrow\com$} This morphism is given by sending
\[
m_\sigma\longmapsto \mu_n
\]
for all $\sigma\in\S_n$. The corresponding morphism
\[
\genmanin_u:\L_\infty\longrightarrow\com\otimes\ass_\infty\cong\ass_\infty
\]
is given by
\[
\genmanin_u(\ell_n) = \sum_{\sigma\in\S_n}\mu_n\otimes s^{-1}\susp_nm_\sigma^\vee = \sum_{\sigma\in\S_n}\mu_n\otimes\overline{m}_\sigma = \mu_n\otimes\sum_{\sigma\in\S_n}(-1)^\sigma(\overline{m}_e)^\sigma.
\]
Therefore, under the canonical identification $\com\otimes\ass_\infty\cong\ass_\infty$, it is the standard antisymmetrization of an $\ass_\infty$-algebra structure giving an $\L_\infty$-algebra structure.

\subsection{The antisymmetrization morphism $a:\lie\longrightarrow\ass$} The induced morphism is a morphism of dg operads
\[
\genmanin_a:\L_\infty\longrightarrow\ass\otimes\C_\infty\ .
\]
It can be interpreted as follows: a $\C_\infty$-algebra can be seen as an $\ass_\infty$-algebra vanishing on the sum of all non-trivial shuffles (see \cite{kadeishvili88}). That is, we have a natural morphism of dg operads
\[
i:\ass_\infty\longrightarrow\C_\infty\ ,
\]
which is in fact given by $\Omega((\susp^{-1})^c\otimes a^\vee)$. Now we can use the second part of Theorem \ref{thm:mainThm} telling us that
\[
\genmanin_a = \genmanin_{1_\ass a} = (1\otimes i)\genmanin_\ass\ .
\]
Therefore, the $\L_\infty$-algebra structure on the tensor product of an associative and a $\C_\infty$-algebra is given by first looking at the $\C_\infty$-algebra as an $\ass_\infty$-algebra, and then antisymmetrizing the resulting $(\ass\otimes\ass_\infty)$-algebra as already done above.

\subsection{The non-symmetric case} The analogues to the three graces in the non-symmetric setting are the operad $\as$ encoding associative algebras, which we already know well, the operad $\mathit{Dend}$ of \emph{dendriform algebras} (\cite[Sect. 13.6.5]{vallette12}), and the operad $\mathit{Dias}$ encoding \emph{diassociative algebras} (\cite[Sect. 13.6.7]{vallette12}). They fit into a sequence of morphisms
\[
\mathit{Dias}\longrightarrow\as\longrightarrow\mathit{Dend}\ .
\]
We get induced morphisms from $\A_\infty$ to
\[
\mathit{Dias}\otimes\mathit{Dend}_\infty\ ,\quad\A_\infty\ ,\quad\mathit{Dend}\otimes\mathit{Dias}_\infty\ ,\quad\mathit{Dend}_\infty\,\quad\text{and}\quad\mathit{Dend}\otimes\A_\infty\ .
\]
We leave their explicit computation to the interested reader.

\appendix
\section{Complete \texorpdfstring{$\P$}{P}-algebras} \label{sect:appendixCompletePalg}

Fix a reduced operad $\P$. We will give the definition of complete $\P$-algebras and filtered $\P$-algebras, and state some basic facts about them, skipping most proofs.

\medskip

To the operad $\P$ we can associate the endofunctor on the category of (unbounded) chain complexes
\[
\widehat{\P}:\ch\longrightarrow\ch
\]
given by
\[
\widehat{\P}(V) \coloneqq \prod_{n\ge1}\P(n)\otimes_{\S_n}V^{\otimes n}.
\]

\begin{lemma}
	The usual unit and composition of $\P$ induce the structure of a monad on $\widehat{\P}$.
\end{lemma}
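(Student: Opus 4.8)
The plan is to transport the well-known monad structure on the ordinary Schur functor $V\mapsto\bigoplus_{n\ge1}\P(n)\otimes_{\S_n}V^{\otimes n}$ --- which exists precisely because $\P$ is an operad --- to its completed analogue $\widehat{\P}$, the only genuine issue being that the defining coproducts have been replaced by products. First I would define the unit $\eta\colon\mathrm{Id}\Rightarrow\widehat{\P}$ from the operadic unit $\eta_\P\colon I\to\P$: for a chain complex $V$, the map $\eta_V$ sends $v$ to $\mathrm{id}\otimes v$, where $\mathrm{id}\in\P(1)$ is the image of $\eta_\P$, viewed inside the arity-$1$ factor $\P(1)\otimes_{\S_1}V$ of the product $\prod_{n\ge1}\P(n)\otimes_{\S_n}V^{\otimes n}$. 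This is visibly natural in $V$ and a chain map.

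Next I would construct the multiplication $\widehat{\gamma}\colon\widehat{\P}\circ\widehat{\P}\Rightarrow\widehat{\P}$. Given $V$, an element of $\widehat{\P}(\widehat{\P}(V))=\prod_{k\ge1}\P(k)\otimes_{\S_k}\bigl(\widehat{\P}(V)\bigr)^{\otimes k}$ is a family whose $k$-th entry lives in $\P(k)\otimes_{\S_k}\bigl(\prod_{n\ge1}\P(n)\otimes_{\S_n}V^{\otimes n}\bigr)^{\otimes k}$. To specify its image in $\widehat{\P}(V)=\prod_{m\ge1}\P(m)\otimes_{\S_m}V^{\otimes m}$, I would describe the component in each output arity $m$ separately: for every $k$ and every decomposition $n_1+\cdots+n_k=m$, project the tensor power onto the factor indexed by $(n_1,\dots,n_k)$, apply the operadic composition $\gamma_\P$ of $\P$ in the $\P$-slots, identify $V^{\otimes n_1}\otimes\cdots\otimes V^{\otimes n_k}$ with $V^{\otimes m}$, and sum over all such $k$ and $(n_1,\dots,n_k)$. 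This is again natural in $V$.

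The crucial point --- and the step I expect to be the main obstacle --- is the well-definedness of $\widehat{\gamma}$, namely that the arity-$m$ prescription above is a \emph{finite} sum, so that it truly lands in the product rather than in some completion thereof. This is exactly where I would use that $\P$ is reduced: since $\P(0)=0$, every inner operation contributing to $\widehat{\gamma}$ has arity $n_i\ge1$, whence $k\le m$ and the set of tuples $(k;n_1,\dots,n_k)$ with $n_i\ge1$ and $\sum_i n_i=m$ is finite. Thus only finitely many factors of the infinite products are ever involved in a fixed output arity, the sum makes sense termwise, and the passage between invariants and coinvariants for the various symmetric group actions is handled exactly as for the operadic composite $\P\circ\P\to\P$ recalled in Section \ref{sect:recollection}.

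Finally I would verify the monad axioms, that is, associativity of $\widehat{\gamma}$ together with the two unit conditions. Because both $\widehat{\gamma}$ and $\eta$ are defined one output arity at a time, and each output arity only ever sees finite sums, the associativity square and the unit triangles reduce, arity by arity, to the associativity of $\gamma_\P$ and to the left and right unit axioms of the operad $\P$ --- identities already recorded in the definition of an operad. Hence no computation beyond the finiteness observation is required, and the verification is formally identical to the one for the classical Schur-functor monad, with products in place of coproducts throughout.
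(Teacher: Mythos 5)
Your proposal is correct and matches the paper's approach: the paper's proof is literally ``by inspection,'' with the sole substantive remark being exactly your key point, namely that reducedness of $\P$ makes every sum appearing in a fixed output arity finite, which is what permits passing between the infinite products and the tensor products. Your write-up simply spells out the inspection (explicit unit, explicit multiplication, arity-by-arity reduction of the monad axioms to the operad axioms) that the paper leaves implicit.
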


\begin{proof}
	By inspection. Notice that, since $\P$ is reduced, every sum involved finite and allows us to switch products and tensor products.
\end{proof}

\begin{lemma}
	We have a canonical morphism of monads
	\[
	\P\longrightarrow\widehat{\P}\ .
	\]
	In particular, every complete $\P$-algebra is a $\P$-algebra.
\end{lemma}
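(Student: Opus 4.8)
The plan is to take for $\iota\colon\P\to\widehat\P$ the obvious inclusion of a direct sum into the corresponding product. First I would set, at each chain complex $V$, the component $\iota_V$ to be the canonical map
\[
\P(V)=\bigoplus_{n\ge1}\P(n)\otimes_{\S_n}V^{\otimes n}\hookrightarrow\prod_{n\ge1}\P(n)\otimes_{\S_n}V^{\otimes n}=\widehat\P(V)\ .
\]
Naturality in $V$ is immediate: both endofunctors are assembled arity by arity from the same pieces $\P(n)\otimes_{\S_n}(-)^{\otimes n}$, and the inclusion of a coproduct into a product commutes with the induced maps $\P(n)\otimes_{\S_n}f^{\otimes n}$. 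This makes $\iota$ a natural transformation of endofunctors of $\ch$.

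Next I would verify the two axioms for a morphism of monads. The unit axiom $\iota_V\circ\eta^\P_V=\eta^{\widehat\P}_V$ holds on the nose, since both units are the map $V\cong\P(1)\otimes_{\S_1}V\hookrightarrow\P(V)$ (resp.\ $\widehat\P(V)$) induced by $\eta_\P\colon I\to\P$ into the arity-$1$ component, and these evidently agree after including the sum into the product. For the multiplication axiom I must check that the square with horizontal maps $\iota$ and vertical maps given by the two monad multiplications commutes, i.e.\ that
\[
\iota_V\circ\mu^\P_V=\mu^{\widehat\P}_V\circ\widehat\P(\iota_V)\circ\iota_{\P(V)}\colon\P\P(V)\longrightarrow\widehat\P(V)\ ,
\]
where both $\mu^\P$ and $\mu^{\widehat\P}$ are induced by the operadic composition $\gamma_\P$ applied term by term.

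The crux — and the point where reducedness of $\P$ is indispensable — is that the two multiplications are given by literally the same formula, so it only remains to see that $\mu^{\widehat\P}$, a priori valued in the completed object, lands back in the direct sum when restricted to the image of $\iota$ and there agrees with $\mu^\P$. I expect this to be the main (and essentially only) obstacle. It dissolves once one observes that a decomposable element of $\P\P(V)=\bigoplus_{k\ge1}\P(k)\otimes_{\S_k}\P(V)^{\otimes k}$ involves only finitely many nonzero tensor factors, each itself a finite sum in $\P(V)$; because $\P(0)=0$, applying $\gamma_\P$ yields a \emph{finite} sum of terms of bounded arity, which represents the same element whether it is computed in the coproduct or in the product. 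Hence the square commutes and $\iota$ is a morphism of monads. Finally, for the ``in particular'' I would invoke the standard fact that a morphism of monads induces a functor in the opposite direction on Eilenberg--Moore categories: given a complete $\P$-algebra, namely a structure map $\gamma\colon\widehat\P(A)\to A$, the composite $\gamma\circ\iota_A\colon\P(A)\to A$ satisfies the $\P$-algebra axioms and exhibits $A$ as a $\P$-algebra.
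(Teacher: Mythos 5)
Your proposal is correct and takes exactly the paper's route: the paper's entire proof reads ``The morphism is induced by the natural inclusion of direct sums of chain complexes into products,'' and your write-up simply supplies the naturality, unit, and multiplication checks (plus the standard Eilenberg--Moore pullback of algebra structures) that the paper leaves implicit. One tiny quibble: reducedness of $\P$ is really what makes $\widehat\P$ a monad in the first place (the preceding lemma), rather than being needed for your multiplication square, where finiteness of sums in $\P\circ\P(V)$ is automatic; but this does not affect correctness.
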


\begin{proof}
	The morphism is induced by the natural inclusion of direct sums of chain complexes into products.
\end{proof}

\begin{definition}
	A \emph{complete $\P$-algebra} is an algebra over the monad $\widehat{\P}$, seen as a $\P$-algebra.
\end{definition}

\begin{theorem}
	The free complete $\P$-algebra over a chain complex $V$ is given by $\widehat{\P}(V)$. In other words, every morphism of $\widehat{\P}$-algebras with $\widehat{\P}(V)$ as domain is completely characterized by its restriction to $V$.
\end{theorem}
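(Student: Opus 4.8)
The statement is precisely the universal property of the free Eilenberg--Moore algebra for the monad $\widehat{\P}$, which the preceding lemma has already identified as a monad on $\ch$; so the plan is to give the standard free-algebra argument, specialized to this monad. Write $\eta_V\colon V\to\widehat{\P}(V)$ for the unit of the monad, which identifies $V$ with the arity-one summand $\P(1)\otimes_{\S_1}V\cong V$ (so that ``restriction to $V$'' means precomposition with $\eta_V$), write $\gamma_{\widehat{\P},V}\colon\widehat{\P}(\widehat{\P}(V))\to\widehat{\P}(V)$ for the monad multiplication coming from the composition of $\P$, and, for a complete $\P$-algebra $A$, let $\gamma_A\colon\widehat{\P}(A)\to A$ denote its structure map. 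Recall that $\widehat{\P}(V)$ carries the $\widehat{\P}$-algebra structure given by $\gamma_{\widehat{\P},V}$. Given any complete $\P$-algebra $A$ and any chain map $f\colon V\to A$, the candidate extension is
\[
\tilde f\coloneqq\big(\widehat{\P}(V)\xrightarrow{\widehat{\P}(f)}\widehat{\P}(A)\xrightarrow{\gamma_A}A\big)\ .
\]

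First I would check that $\tilde f$ genuinely extends $f$: using naturality of $\eta$ together with the unit axiom $\gamma_A\circ\eta_A=\id_A$ of the algebra $A$, one gets $\tilde f\circ\eta_V=\gamma_A\circ\widehat{\P}(f)\circ\eta_V=\gamma_A\circ\eta_A\circ f=f$. Next I would verify that $\tilde f$ is a morphism of $\widehat{\P}$-algebras, i.e.\ that $\tilde f\circ\gamma_{\widehat{\P},V}=\gamma_A\circ\widehat{\P}(\tilde f)$; this is the usual diagram chase combining naturality of the monad multiplication with the associativity axiom $\gamma_A\circ\widehat{\P}(\gamma_A)=\gamma_A\circ\gamma_{\widehat{\P},A}$ of the algebra $A$.

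For uniqueness --- the content emphasized by the phrase ``completely characterized by its restriction to $V$'' --- I would argue as follows. If $g\colon\widehat{\P}(V)\to A$ is any morphism of $\widehat{\P}$-algebras with $g\circ\eta_V=f$, then, using the monad unit identity $\gamma_{\widehat{\P},V}\circ\widehat{\P}(\eta_V)=\id_{\widehat{\P}(V)}$ together with the fact that $g$ commutes with the structure maps, one computes
\[
g=g\circ\gamma_{\widehat{\P},V}\circ\widehat{\P}(\eta_V)=\gamma_A\circ\widehat{\P}(g)\circ\widehat{\P}(\eta_V)=\gamma_A\circ\widehat{\P}(g\circ\eta_V)=\gamma_A\circ\widehat{\P}(f)=\tilde f\ ,
\]
so that $g=\tilde f$, as desired.

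The argument is entirely formal, so I do not expect a genuine obstacle; the only point requiring a word of care is that $\widehat{\P}$ is built from the product $\prod_{n\ge1}$ rather than from a direct sum. This is exactly why the preceding lemma insisted that $\P$ be reduced: it guarantees that the composition maps assembling the monad multiplication involve only finite sums in each arity, so that $\widehat{\P}$ is a bona fide monad and all the maps above --- in particular $\widehat{\P}(f)$ and $\widehat{\P}(g)$, which are defined component-wise --- are well defined. Once that is in hand, the Eilenberg--Moore formalism applies verbatim and yields the claim.
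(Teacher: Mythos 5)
Your proof is correct: it is the standard Eilenberg--Moore free-algebra argument for the monad $\widehat{\P}$, and since the paper explicitly skips the proof of this theorem (the appendix states these facts while "skipping most proofs"), your argument supplies precisely the justification the paper leaves implicit. One harmless imprecision: the unit $\eta_V$ embeds $V$ into the arity-one component $\P(1)\otimes V$ via the operad unit $\k\to\P(1)$, rather than identifying $V$ with all of $\P(1)\otimes_{\S_1}V$ (which would require $\P(1)=\k$); this does not affect anything, since your existence and uniqueness computations only use the formal monad identities for $\eta_V$, $\gamma_{\widehat{\P},V}$, and $\gamma_A$, together with the finiteness guaranteed by $\P$ being reduced, exactly as you note.
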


One can also consider filtered (i.e. topological) $\P$-algebras.

\begin{definition}
	A \emph{filtered $\P$-algebra} $(A,F_\bullet A)$ is a $\P$-algebra $A$ together with a descending filtration
	\[
	A=F_1A\supseteq F_2A\supseteq F_3A\supseteq\cdots
	\]
	satisfying the following properties:
	\begin{enumerate}
		\item The filtration is closed under the differential of $A$, i.e.
		\[
		d_A(F_nA)\subseteq F_nA
		\]
		for all $n\ge1$.
		\item The composition map respects the filtration, that is
		\[
		\gamma_A(\P(k)\otimes_{\S_k}F_{n_1}A\otimes\cdots\otimes F_{n_k})\subseteq F_{n_1+\cdots+n_k}A
		\]
		for any $k\ge1$ and $n_1,\ldots n_k\ge1$.
		\item The algebra $A$ is complete with respect to the filtration, that is to say that
		\[
		A\cong\varinjlim_nA/F_nA
		\]
		as $\P$-algebras via the natural morphism.
	\end{enumerate}
\end{definition}

\begin{proposition} \label{prop:filtered implies complete}
	If $(A,F_\bullet A)$ is a filtered $\P$-algebra, then $A$ is a complete $\P$-algebra.
\end{proposition}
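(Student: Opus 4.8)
The plan is to produce a structure map $\widehat{\gamma}_A\colon\widehat{\P}(A)\to A$ making $A$ an algebra over the monad $\widehat{\P}$, and to check that it restricts to the given $\P$-algebra structure along the canonical morphism $\P\to\widehat{\P}$. The observation that makes everything work is that, setting all $n_i=1$ in condition (2) of the definition of a filtered $\P$-algebra and using $F_1A=A$, one obtains
\[
\gamma_A\bigl(\P(n)\otimes_{\S_n}A^{\otimes n}\bigr)\subseteq F_nA
\qquad\text{for all } n\ge 1.
\]
Thus the arity-$n$ part of any operadic composition lands in $F_nA$, which supplies the ``decay'' needed for the relevant infinite sums to converge in the complete algebra $A$.

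First I would define $\widehat{\gamma}_A$. An element of $\widehat{\P}(A)=\prod_{n\ge 1}\P(n)\otimes_{\S_n}A^{\otimes n}$ is a family $(x_n)_{n\ge 1}$, and I set $\widehat{\gamma}_A\bigl((x_n)_n\bigr)\coloneqq\sum_{n\ge 1}\gamma_A(x_n)$. By the displayed inclusion $\gamma_A(x_n)\in F_nA$, so the partial sums $s_N=\sum_{n=1}^N\gamma_A(x_n)$ satisfy $s_M-s_N\in F_{N+1}A$ for $M>N$; hence $(s_N)_N$ is Cauchy and converges to a well-defined limit under the identification $A\cong\varprojlim_m A/F_mA$ of the completeness condition (3). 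On the sub-object $\P(A)=\bigoplus_n\P(n)\otimes_{\S_n}A^{\otimes n}$ the sum is finite and equals $\gamma_A$, so $\widehat{\gamma}_A$ genuinely extends the original structure. That $\widehat{\gamma}_A$ is a chain map follows from condition (1): since $d_A$ preserves each $F_nA$ it is continuous for the filtration and therefore commutes with the convergent sum, while each restriction $\gamma_A|_{\P(n)\otimes_{\S_n}A^{\otimes n}}$ is already a chain map.

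It then remains to verify the two monad-algebra axioms. The unit axiom is immediate: $\eta_A(a)$ is concentrated in arity $1$ as $\id\otimes a\in\P(1)\otimes A$, so $\widehat{\gamma}_A(\eta_A(a))=\gamma_A(\id\otimes a)=a$ by the unit axiom for $\gamma_A$, with no convergence issue. The associativity axiom, asserting $\widehat{\gamma}_A\circ\widehat{\P}(\widehat{\gamma}_A)=\widehat{\gamma}_A\circ\mu$ on $\widehat{\P}(\widehat{\P}(A))$ (where $\mu$ is the monad multiplication induced by $\gamma_\P$), is the crux. I would argue it by continuity and density: equip each of $\widehat{\P}(A)$, $(\widehat{\P}(A))^{\otimes m}$ and $\widehat{\P}(\widehat{\P}(A))$ with the filtrations induced by $F_\bullet A$ (the valuation of an arity-$n$ component being shifted up by $n$ as above), check that all four maps in the square are filtration-preserving, hence continuous, and observe that the two composites agree on the dense sub-object $\P(\P(A))\subseteq\widehat{\P}(\widehat{\P}(A))$ of finitely supported families — there both composites reduce to finite sums and coincide by the associativity of the genuine $\P$-algebra structure $\gamma_A$. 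Two continuous maps into the complete (hence Hausdorff) algebra $A$ agreeing on a dense subset are equal.

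The main obstacle is precisely this last step: making the informal ``continuity'' rigorous. Concretely, the work is to define the filtrations on the iterated constructions so that $\mu$ and the functorial map $\widehat{\P}(\widehat{\gamma}_A)$ are filtration-preserving and so that finitely supported families are dense, and then to verify that truncating a family to arities $\le N$ (at both the inner and the outer level) produces an error whose image under either composite lies in $F_{N+1}A$. All of these estimates follow mechanically from the single inequality above together with conditions (1)--(3); once they are in place, the density argument closes the proof.
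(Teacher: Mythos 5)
Your proof is correct and takes essentially the same route as the paper: both rest on the observation that condition (2) with all $n_i=1$ gives $\gamma_A\bigl(\P(n)\otimes_{\S_n}A^{\otimes n}\bigr)\subseteq F_nA$, so that the structure map $\widehat{\gamma}_A$ on $\widehat{\P}(A)$ is defined by a sum that converges in the complete algebra $A$. The paper phrases this component-wise in $\varprojlim_m A/F_mA$ (each component being the finite sum over arities $k<m$) and dismisses the monad axioms as straightforward, while you phrase it as a Cauchy series and sketch the axiom check by continuity and density --- the same construction in two equivalent presentations.
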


\begin{proof}
	Suppose $(A,F_\bullet A)$ is filtered, then every $a\in A$ can be written as
	\[
	\{a_n\}_{n\ge1}\in\varinjlim_nA/F_nA\ ,
	\]
	where $a_n\in A/F_nA$ is sent to $a_m$ by the obvious projection whenever $n\ge m$. So let
	\[
	x\coloneqq\left(p_k\otimes\{a^{k,1}_{n_1}\}_{n_1}\otimes\cdots\otimes\{a^{k,k}_{n_k}\}_{n_k}\right)_{k\ge1}\in\widehat{\P}(A)\ .
	\]
	Then we define
	\[
	\widehat{\gamma}_A(x)\coloneqq\left\{\sum_{k<m}\gamma_{A/F_mA}(p_k\otimes a^{k,1}_m\otimes\cdots\otimes a^{k,k}_m)\mod F_mA\right\}_{m\ge1}\in\varinjlim_nA/F_nA\cong A\ .
	\]
	It is straightforward to check that this makes $A$ into a complete $\P$-algebra.
\end{proof}

Finally, the free complete $\P$-algebra is canonically a filtered $\P$-algebra.

\begin{proposition}
	Let $V$ be a chain complex. Then the filtration
	\[
	F_n\widehat{\P}(V)\coloneqq\prod_{k\ge n}\P(k)\otimes_{\S_k}V^{\otimes k}
	\]
	makes $\widehat{\P}(V)$ into a filtered $\P$-algebra.
\end{proposition}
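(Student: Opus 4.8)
The plan is to verify directly the three axioms in the definition of a filtered $\P$-algebra for the filtration $F_n\widehat{\P}(V)=\prod_{k\ge n}\P(k)\otimes_{\S_k}V^{\otimes k}$. That the filtration is descending and that $F_1\widehat{\P}(V)=\widehat{\P}(V)$ is immediate, since increasing $n$ merely discards more factors of the product (reading the missing factors as zero), and the product defining $\widehat{\P}(V)$ already starts at $k=1$. For condition (1), I would note that the differential of $\widehat{\P}(V)$ is induced factorwise from the differentials of the $\P(k)$ and of $V$, and therefore sends each summand $\P(k)\otimes_{\S_k}V^{\otimes k}$ to itself, preserving the arity (equivalently, the number of $V$-factors). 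Since $F_n\widehat{\P}(V)$ is precisely the sub-product on arities $k\ge n$, it is stable under the differential, giving (1).

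The main step is condition (2), which is where the whole point of the filtration lies. Here I would use the explicit description of the composition map $\gamma_A$ of the complete $\P$-algebra $\widehat{\P}(V)$ recalled in the proof of Proposition \ref{prop:filtered implies complete}: it is induced by the operadic composition of $\P$, and operadic composition of an operation of arity $k$ with operations of arities $a_1,\ldots,a_k$ yields an operation of arity $a_1+\cdots+a_k$. Concretely, take inputs $x^1,\ldots,x^k$ with $x^i\in F_{n_i}\widehat{\P}(V)$ and an operation $p\in\P(k)$; by definition of the filtration each $x^i$ has vanishing component in every arity strictly less than $n_i$. Hence, in the expansion of $\gamma_A(p\otimes x^1\otimes\cdots\otimes x^k)$, the component of arity $m$ is a sum of contributions indexed by decompositions $m=a_1+\cdots+a_k$ with $a_i\ge n_i$, and any such decomposition forces $m\ge n_1+\cdots+n_k$. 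Thus the result has no component below arity $n_1+\cdots+n_k$, i.e.\ it lies in $F_{n_1+\cdots+n_k}\widehat{\P}(V)$, which is exactly condition (2).

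Finally, for condition (3), I would observe that the quotient is the truncated product
\[
\widehat{\P}(V)/F_n\widehat{\P}(V)\cong\prod_{1\le k<n}\P(k)\otimes_{\S_k}V^{\otimes k}\ ,
\]
with the structure maps being the obvious projections forgetting the high-arity factors. The limit of these truncations along these projections recovers the full product $\prod_{k\ge1}\P(k)\otimes_{\S_k}V^{\otimes k}=\widehat{\P}(V)$, and the natural comparison map is then an isomorphism, establishing completeness. No step presents a genuine difficulty; the only point requiring a little care is condition (2), and there the sole substance is the additivity of arity under operadic composition, which guarantees that composing elements of filtration levels $n_1,\ldots,n_k$ lands in filtration level $n_1+\cdots+n_k$.
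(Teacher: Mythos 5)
Your proof is correct. The paper itself states this proposition without proof (the appendix announces it is "skipping most proofs"), and your direct verification of the three axioms --- factorwise differential preserving each arity component for condition (1), additivity of arity under operadic composition for condition (2), and identification of $\widehat{\P}(V)/F_n\widehat{\P}(V)$ with the finite truncated product whose inverse limit recovers the full product for condition (3) --- is precisely the routine check the author left to the reader.
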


\bibliographystyle{alpha}
\bibliography{Deformation_theory_with_homotopy_algebra_structures_on_tensor_products}

\end{document}